\newtheorem{theorem}{Theorem}[section]
\newtheorem{proposition}[theorem]{Proposition}
\theoremstyle{definition}
\newtheorem{definition}[theorem]{Definition}
\theoremstyle{remark}
\newtheorem{remark}[theorem]{Remark}
\numberwithin{equation}{section}
\begin{document}
\title[Positive ideals of multilinear operators ]{Positive ideals of
multilinear operators }
\author{ Athmane Ferradi, Abdelaziz Belaada and Khalil Saadi}
\date{}
\dedicatory{Laboratoire d'Analyse Fonctionnelle et G\'{e}om\'{e}trie des
Espaces, Universit\'{e} de M'sila, Alg\'{e}rie.\\
athmane.ferradi@univ-msila.dz\\
abdelaziz.belaada@univ-msila.dz\\
khalil.saadi@univ-msila.dz}

\begin{abstract}
We introduce and explore the concept of positive ideals for both linear and
multilinear operators between Banach lattices. This paper delineates the
fundamental principles of these new classes and provides techniques for
constructing positive multi-ideals from given positive ideals. Furthermore,
we present an example of a positive multi-ideal by introducing a new class,
referred to as positive $(p_{1},...,p_{m};r)$-dominated multilinear
operators. We establish a natural analogue of the Pietsch domination theorem
and Kwapie\'{n}'s factorization theorem within this class.
\end{abstract}

\maketitle

\setcounter{page}{1}

%\dedicatory{This paper is dedicated to Professor ABCD}

\let\thefootnote\relax\footnote{\textit{2020 Mathematics Subject
Classification.} Primary 47B65, 46G25, 47L20, 47B10, 46B42.
\par
{}\textit{Key words and phrases. }Banach lattice, Positive $p$-summing
operators, Cohen positive strongly $p$-summing multilinear operators,
Positive $(p,q)$-dominated operators, Positive left ideal, Positive right
ideal, Positive ideal, Positive left multi-ideal, Positive right
multi-ideal, Positive multi-ideal, Positive $(p_{1},\ldots ,p_{m};r)$%
-dominated.}

\section{\textsc{Introduction and preliminaries}}

The study of ideals for linear and multilinear operators between Banach
spaces has been extensively explored by various researchers, \cite{BPR07,
defl, DJT95, Gei84, Pml, PIETSCHoi}. In recent years, attention has also
focused on the study of classes of operators defined on Banach lattice
spaces, including positive $p$-summing \cite{Bla87}, positive strongly $p$%
-summing \cite{AB14} and positive $(p,q)$-dominated operators \cite{CBD21}.
The extension of these classes to the multilinear case has been widely
studied, such as Cohen positive strongly $p$-summing multilinear operators 
\cite{BB18}, positive Cohen $p$-nuclear multilinear operators \cite{BBH21}
and factorable positive strongly $p$-summing multilinear operators \cite%
{BBR23}. However, it is important to note that these (positive) classes are
not necessarily considered as operator ideals. The primary objective of this
paper is to introduce and analyze the concepts of positive ideals of linear
operators and positive multi-ideals of multilinear operators. Building on
the definition of multilinear ideals, we aim to explore the positive setting
by examining various classes of positive linear and multilinear operators.
This theoretical framework demonstrates that several previously known
classes of linear and multilinear operators can be classified as positive
operator ideals. We also propose certain construction methods for
generalizing positive multi-ideals. Inspired by the recent work of Chen et
al. \cite{CBD21}, where they introduced and studied the class of positive $%
(p,q)$-dominated linear operators, we extend this concept to the multilinear
case, which we refer to as positive $(p_{1},...,p_{m};r)$-dominated
multilinear operators. The corresponding space is denoted by $\mathcal{D}%
_{\left( p_{1},\ldots ,p_{m};r\right) }^{+}$, which forms a positive
multi-ideal. We provide a proof of a natural analog of Pietsch's domination
theorem within this extended class. Additionally, we proceed with a
presentation of Kwapie\'{n}'s factorization theorem, showing that the space $%
\mathcal{D}_{\left( p_{1},\ldots ,p_{m};r\right) }^{+}$ can be factorized as
follows%
\begin{equation*}
\mathcal{D}_{\left( p_{1},\ldots ,p_{m};r\right) }^{+}=\mathcal{D}_{r^{\ast
}}^{m+}\left( \Pi _{p_{1}}^{+},\ldots ,\Pi _{p_{m}}^{+}\right) ,
\end{equation*}%
where $\Pi _{p_{j}}^{+}$ $\left( 1\leq j\leq m\right) $ is the Banach space
of positive $p$-summing operators, and $\mathcal{D}_{r^{\ast }}^{m+}$ is the
Banach space of Cohen positive strongly $r^{\ast }$-summing multilinear
operators. We conclude this paper with a special case by considering $%
r=\infty ,$ i.e., $1/p=1/p_{1}+...+1/p_{m}$. This class, denoted as positive 
$(p_{1},...,p_{m})$-dominated, constitutes a positive Banach right
multi-ideal.

The paper is structured as follows.

In section 1, we give a brief overview of the basic concepts and
terminologies that are important for our work. We also recall the definition
of positive $p$-summing, Cohen positive strongly $p$-summing and positive $%
(p,q)$-dominated linear operators.

In Section 2, we establish the foundations for positive left ideals, denoted 
$\mathcal{B}_{L}^{+}$ , and right ideals, denoted $\mathcal{B}_{R}^{+}$ , of
linear operators. This conceptual basis can of course be transferred to the
multilinear case. We introduce the composition method to generate a positive
right ideal of multilinear operators from a given positive right ideal of an
operator. The multilinear operator $T$ belongs to the right multi-ideal $%
\mathcal{M}_{R}^{+}$ if and only if its linearization $T_{L}$ belongs to $%
\mathcal{B}_{R}^{+}$. Moreover, we introduce the factorization method to
generate a positive left ideal of multilinear operators from given positive
operator left ideals $\mathcal{B}_{1,L}^{+},...,\mathcal{B}_{m,L}^{+}$. This
section is complemented by illustrative examples of positive left and right
ideals.

In Section 3, we introduce the concept of positive $(p_{1},...,p_{m};r)$%
-dominated multilinear operators. These operators satisfy the Pietsch
factorization theorem and are a good example of a positive multi-ideal.%
\newline

In this paper, we use the notations $E,F,G,E_{1},...,E_{m}$ for Banach
lattices and $X,Y,X_{1},...,X_{m}$ for Banach spaces over $\mathbb{R}$ or $%
\mathbb{C}$. We denote by $\mathcal{L}(X;Y)$ the Banach space of bounded
linear operators from $X$ to $Y$.\ By $B_{X}$ we denote the closed unit ball
of $X$ and by $X^{\ast }$ its topological dual. For $1\leq p\leq \infty $,
let $p^{\ast }$ be its conjugate, i.e., $1/p+1/p^{\ast }=1$. Let $E$ be a
Banach lattice with norm $\left\Vert .\right\Vert $ and order $\leq $. We
denote by $E^{+}$ the positive cone of $E$ , i.e., $E^{+}=\{x\in
E:x\geqslant 0\}.$ For $x\in E$ let $x^{+}:=\sup \{x,0\}\geq 0\ $and $%
x^{-}:=\sup \{-x,0\}\geq 0$ be the positive part and the negative part of $x$%
, respectively. For each $x\in E,$ we have $x=x^{+}-x^{-}$ and $%
|x|=x^{+}+x^{-}.$ The dual $E^{\ast }$ of a Banach lattice $E$ is a Banach
lattice with the natural order 
\begin{equation*}
x_{1}^{\ast }\leq x_{2}^{\ast }\Leftrightarrow \langle x,x_{1}^{\ast
}\rangle \leq \langle x,x_{2}^{\ast }\rangle ,\forall x\in E^{+}.
\end{equation*}%
Recall that a bounded linear operator $T:E\rightarrow F$ is called positive
if $T(x)\in F^{+}$, whenever $x\in E^{+}$. Let $\mathcal{L}^{+}(E;F)$ be the
set of all positive operators from $E$ to $F$. A linear operator $T$ is
called \textit{regular} if there exist $T_{1},T_{2}\in \mathcal{L}^{+}(E;F)$
such that $T=T_{1}-T_{2}.$ We denote by $\mathcal{L}^{r}(E;F)$ the vector
space of regular operators from $E$ to $F.$ It is easy to see that the
vector space $\mathcal{L}^{r}(E;F)$ is generated by positive operators. We
equip $\mathcal{L}^{r}(E;F)$ with the norm, which is defined as%
\begin{equation*}
\left\Vert T\right\Vert _{r}=\inf \left\{ \left\Vert S\right\Vert :S\in 
\mathcal{L}^{+}(E;F),\left\vert T\left( x\right) \right\vert \leq S\left(
x\right) ,x\in E^{+}\right\} ,
\end{equation*}%
then $\mathcal{L}^{r}(E;F)$ becomes a Banach space. By \cite[Section 1.3]%
{MN91}, if $F=\mathbb{R}$, we have $E^{\ast }=\mathcal{L}(E,\mathbb{R})=%
\mathcal{L}^{r}(E,\mathbb{R}).$ By a sublattice of a Banach lattice $E$ we
mean a linear subspace $E_{0}$ of $E$ such that $\sup \left\{ x,y\right\} $
belongs to $E_{0}$ if $x,y\in E_{0}$. The canonical embedding $%
i:E\longrightarrow E^{\ast \ast }$ such that $\left\langle i(x),x^{\ast
}\right\rangle =\left\langle x^{\ast },x\right\rangle $ of $E$ into its
second dual $E^{\ast \ast }$ is an order isometry from $E$ onto a sublattice
of $E^{\ast \ast }$. If we consider $E$ as a sublattice of $E^{\ast \ast }$
we have for $x_{1},x_{2}\in E$ 
\begin{equation*}
x_{1}\leq x_{2}\Longleftrightarrow \left\langle x_{1},x^{\ast }\right\rangle
\leq \left\langle x_{2},x^{\ast }\right\rangle ,\quad \forall x^{\ast }\in
E^{\ast +}.
\end{equation*}%
The spaces $\mathcal{C}(K)$ where $K$ compact and $L_{p}(\mu )$, $(1\leq
p\leq \infty )$ are Banach lattices. For a Banach space $X$, we denote by $%
\ell _{p}^{n}(X)$ the Banach space of all absolutely $p$-summable sequences $%
(x_{i})_{i=1}^{n}\subset X$ with the norm, 
\begin{equation*}
\Vert (x_{i})_{i=1}^{n}\Vert _{p}=\left( \sum_{i=1}^{n}\Vert x_{i}\Vert
^{p}\right) ^{\frac{1}{p}},
\end{equation*}%
and by $\ell _{w,p}^{n}(X)$ the Banach space of all weakly $p$-summable
sequences $(x_{i})_{i=1}^{n}\subset X$ with the norm,%
\begin{equation*}
\Vert (x_{i})_{i=1}^{n}\Vert _{w,p}=\sup_{x^{\ast }\in B_{X^{\ast }}}\left(
\sum_{i=1}^{n}|\langle x^{\ast },x_{i}\rangle |^{p}\right) ^{\frac{1}{p}}.
\end{equation*}%
Consider the case where $X$ is replaced by a Banach lattice $E$, and define 
\begin{equation*}
\ell _{|w|,p}^{n}(E)=\{(x_{i})_{i=1}^{n}\subset E:\left( |x_{i}|\right)
_{i=1}^{n}\in \ell _{w,p}^{n}(E)\}
\end{equation*}%
and $\Vert (x_{i})_{i=1}^{n}\Vert _{|w|,p}=\Vert (|x_{i}|)_{i=1}^{n}\Vert
_{w,p}.$ Let $B_{E^{\ast }}^{+}=\{x^{\ast }\in B_{E^{\ast }}:x^{\ast }\geq
0\}=B_{E^{\ast }}\cap E^{\ast +}$. If $(x_{i})_{i=1}^{n}\subset E^{+}$ , we
have that 
\begin{equation*}
\Vert (x_{i})_{i=1}^{n}\Vert _{|w|,p}=\Vert (x_{i})_{i=1}^{n}\Vert
_{w,p}=\sup_{x^{\ast }\in B_{E^{\ast }}^{+}}\left( \sum_{i=1}^{n}\langle
x^{\ast },x_{i}\rangle ^{p}\right) ^{\frac{1}{p}}.
\end{equation*}

We recall certain classes of positive linear and multilinear operators:

- \textit{Positive }$p$\textit{-summing operator}: Blasco \cite{Bla87}
introduced the positive generalization of $p$-summing operators as follows:
An operator $T:E\longrightarrow X$ is said to be positive $p$-summing ($%
1\leq p<\infty $) if there exists a constant $C>0$ such that the inequality 
\begin{equation}
\left\Vert \left( T\left( x_{i}\right) \right) _{i=1}^{n}\right\Vert
_{p}\leq C\left\Vert \left( x_{i}\right) _{i=1}^{n}\right\Vert _{w,p},
\label{sec1def1}
\end{equation}%
holds %for every  $n\in \mathbb{N}$ and 
for all $x_{1},\ldots ,x_{n}\in E^{+}.$ We denote by $\Pi _{p}^{+}(E;X)$,
the space of positive $p$-summing operators from $E$ to $X,$ which is a
Banach space with the norm $\pi _{p}^{+}(T)$ given by the infimum of the
constants $C>0$ that verifying the inequality $(\ref{sec1def1}).$ O.I.
Zhukova \cite{Zhu98}, gives the Pietsch domination theorem. The operator $T$
belongs to $\Pi _{p}^{+}(E;X)$ if and only if there is a Radon probability
measure $\mu $ on the set $B_{E^{\ast }}^{+}$ and a positive constant $C$
such that for every $x\in E^{+}$%
\begin{equation}
\Vert T\left( x\right) \Vert \leq C\left( \int_{B_{E^{\ast }}^{+}}\langle
x,x^{\ast }\rangle ^{p}d\mu (x^{\ast })\right) ^{\frac{1}{p}}.
\label{DomiSumming}
\end{equation}

- \textit{Positive strongly }$p$\textit{-summing operator}:\textit{\ }The
positive generalization of strongly $p$-summing operators, originally
introduced by Cohen \cite{Coh73}, was further developed by Achour and
Belacel \cite{AB14}. An operator $T:X\rightarrow F$ is positive strongly $p$%
-summing ($1<p\leq \infty $) if there exists a constant $C>0$, so that for
all finite sets $(x_{i})_{i=1}^{n}\subset X$ and $(y_{i}^{\ast
})_{i=1}^{n}\subset F^{\ast +}$, we have 
\begin{equation*}
\sum_{i=1}^{n}|\langle T\left( x_{i}\right) ,y_{i}^{\ast }\rangle |\leq
C\Vert (x_{i})_{i=1}^{n}\Vert _{p}\Vert (y_{i}^{\ast })_{i=1}^{n}\Vert
_{w,p^{\ast }}.
\end{equation*}%
The class of all positive strongly $p$-summing operators between $X$ and $F$
is denoted by $\mathcal{D}_{p}^{+}(X;F)$. The infimum of all the constant $C$
in the inequality defines the norm $d_{p}^{+}$ on $\mathcal{D}_{p}^{+}(X;F)$%
. In \cite{AB14} we have: The operator $T$ belongs to $\mathcal{D}%
_{p}^{+}(X;F)$ if and only if there exist a positive constant $C>0$ and
Radon probability measure $\mu $ on $B_{F^{\ast \ast }}^{+}$ such that for
all $x\in X$ and $y^{\ast }\in F^{\ast }$, we have 
\begin{equation}
|\langle T\left( x\right) ,y^{\ast }\rangle |\leq C\Vert x\Vert \left(
\int_{B_{F^{\ast \ast }}^{+}}\langle |y^{\ast }|,\psi \rangle ^{p^{\ast
}}d\mu \right) ^{\frac{1}{p^{\ast }}}.  \label{domPS}
\end{equation}

- \textit{Positive }$(p,q)$\textit{-dominated operator}: The notion of $%
(p,q) $-dominated operator was initiated by Pietsch \cite{PIETSCHoi} and
generalized to positive $(p,q)$-dominated operator by Chen et al. \cite%
{CBD21}. Let $1\leq p,q\leq \infty $ and let $\frac{1}{r}=\frac{1}{p}+\frac{1%
}{q}$. We say that an operator $T$ from a Banach lattice $E$ to a Banach
lattice $F$ is positive $(p,q)$-dominated if there exists a constant $C>0$
such that 
\begin{equation}
\left( \sum_{i=1}^{n}|\langle T\left( x_{i}\right) ,y_{i}^{\ast }\rangle
|^{r}\right) ^{\frac{1}{r}}\leq C\left\Vert \left( x_{i}\right)
_{i=1}^{n}\right\Vert _{w,p}\left\Vert \left( y_{i}^{\ast }\right)
_{i=1}^{n}\right\Vert _{w,q},  \label{D01}
\end{equation}%
for all finite families $(x_{i})_{i=1}^{n}$ in $E^{+}$ and $(y_{i}^{\ast
})_{i=1}^{n}$ in $F^{\ast +}$. The class of all positive $(p,q)$-dominated
operators from $E$ to $F$ is denoted by $\mathcal{D}_{p,q}^{+}(E;F)$. In
this case, we define 
\begin{equation*}
d_{p,q}^{+}(T)=\inf \{C>0\ \text{satisfying\ the\ inequality}(\ref{D01})\}.
\end{equation*}

In \cite[Theorem 3.3]{CBD21}$,$ we have the following result: Let $1\leq
p,q\leq \infty $. An operator $T:E\rightarrow F$ is positive $(p,q)$%
-dominated with positive constant $C$ if and only if there exist a
probability measure $\mu $ on $B_{E^{\ast }}^{+}$ and a probability measure $%
\nu $ on $B_{F^{\ast \ast }}^{+}$ such that 
\begin{equation*}
\left\vert \langle T\left( x\right) ,y^{\ast }\rangle \right\vert \leq
C\left( \int_{B_{E^{\ast }}^{+}}\langle x^{\ast },x\rangle ^{p}d\mu (x^{\ast
})\right) ^{\frac{1}{p}}\left( \int_{B_{F^{\ast \ast }}^{+}}\langle y^{\ast
\ast },y^{\ast }\rangle ^{q}d\nu (y^{\ast \ast })\right) ^{\frac{1}{q}}
\end{equation*}%
for all $x\in E^{+}$ and $y^{\ast }\in F^{\ast +}$.

- \textit{Positive }$p$\textit{-nuclear operator}: If $q=p^{\ast },$ the
definition of positive $(p,p^{\ast })$-dominated operators coincides with
the concept of positive $p$-nuclear operators which introduced and studied
by Bougoutaia et al. \cite{BBH21}. We denote by $\mathcal{N}_{p}^{+}(E;F)$
the space of positive $p$-nuclear operators and $N_{p}^{+}\left( .\right) $
its corresponding norm.

Let $X_{1},...,X_{m},Y$ be Banach spaces and $E_{1},...,E_{m},F$ be Banach
lattices. The space $\mathcal{L}(X_{1},...,X_{m};Y)$ stands for the Banach
space of all bounded multilinear operators from $X_{1}\times ...\times X_{n}$
to $Y.$ We denote the complete projective tensor product of $X_{1},...,X_{m}$
by $X_{1}\widehat{\otimes }_{\pi }...\widehat{\otimes }_{\pi }X_{m}$. For
each $T\in \mathcal{L}(X_{1},...,X_{m};Y)$, we consider its linearization $%
T_{L}:X_{1}\widehat{\otimes }_{\pi }\cdots \widehat{\otimes }_{\pi
}X_{m}\rightarrow Y$, defined as $T_{L}(x^{1}\otimes \cdots \otimes
x^{m})=T(x^{1},\ldots ,x^{m})$. Let $\sigma _{m}:X_{1}\times ...\times
X_{m}\rightarrow X_{1}\widehat{\otimes }_{\pi }\cdots \widehat{\otimes }%
_{\pi }X_{m}$ be the canonical multilinear operator, defined as $\sigma
_{m}(x_{1},...,x_{m})=x_{1}\otimes ...\otimes x_{m}$. Then, we have $%
T=T_{L}\circ \sigma _{m}$. The $m$-linear mapping of finite type $%
T_{f}:X_{1}\times ...\times X_{m}\longrightarrow Y$ is defined as $%
T_{f}\left( x^{1},...,x^{m}\right) =\sum_{i=1}^{k}x_{i,1}^{\ast }\left(
x^{1}\right) ...x_{i,m}^{\ast }\left( x^{m}\right) y_{i}.$ We denote by $%
\mathcal{L}_{f}(X_{1},...,X_{m};Y)$ the space of all $m$-linear mappings of
finite type.

- \textit{Cohen positive strongly }$p$\textit{-summing multilinear operator}%
: This notion was introduced by Bougoutaia and Belacel \cite{BB18} and is a
natural generalization of the multilinear operators studied by Achour and
Mezrag in \cite{AchMez07}: An $m$-linear operator $T:X_{1}\times ...\times
X_{m}\rightarrow F$ is said to be Cohen positive strongly $p$-summing $%
(1<p\leq \infty )$, if there exists a constant $C>0,$ such that for any $%
(x_{i}^{1},...,x_{i}^{m})\in X_{1}\times ...\times X_{m}$ ($1\leq i\leq n$)
and $y_{1}^{\ast },...,y_{n}^{\ast }\in F^{\ast +}$,the following condition
holds 
\begin{equation}
\sum_{i=1}^{n}\left\vert \langle T(x_{i}^{1},...,x_{i}^{m}),y_{i}^{\ast
}\rangle \right\vert \leq C\left( \sum_{i=1}^{n}\prod_{j=1}^{m}\Vert
x_{i}^{j}\Vert ^{p}\right) ^{\frac{1}{p}}\left\Vert \left( y_{i}^{\ast
}\right) _{i=1}^{n}\right\Vert _{w,p^{\ast }}.  \label{nclpos1}
\end{equation}%
We equip the space $\mathcal{D}_{p}^{m+}(X_{1},...,X_{m};F)$ of all Cohen
positive strongly $p$-summing multilinear operators with the norm $%
d_{p}^{m+}(.)$, defined as the smallest constant $C$ for which the
inequality (\ref{nclpos1}) holds.\newline

- \textit{Positive Cohen }$p$\textit{-nuclear multilinear operator}: The
authors in \cite[Definition 2.1]{BBH21} have introduced the class of
positive Cohen $p$-nuclear $m$-linear operators, which are a natural
generalization of the multilinear case studied by Achour and Alouani in \cite%
{AA10}: An $m$-linear operator $T:E_{1}\times ...\times E_{m}\rightarrow F$
is positive Cohen $p$-nuclear $(1<p<\infty )$ if there is a constant $C>0$
such that for any $x_{1}^{j},...,x_{n}^{j}\in E_{j}^{+}$ ($1\leq j\leq m$)
and $y_{1}^{\ast },...,y_{n}^{\ast }\in F^{\ast +}$, we have 
\begin{equation}
\left\vert \sum_{i=1}^{n}\langle T(x_{i}^{1},...,x_{i}^{m}),y_{i}^{\ast
}\rangle \right\vert \leq C\underset{\underset{1\leq j\leq m}{x^{j\ast }\in
B_{E_{j}^{\ast }}^{+}}}{\sup }\left( \sum_{i=1}^{n}\prod_{j=1}^{m}\langle
x_{i}^{j},x^{j\ast }\rangle ^{p}\right) ^{\frac{1}{p}}\left\Vert \left(
y_{i}^{\ast }\right) _{i=1}^{n}\right\Vert _{w,p^{\ast }}.  \label{nclpos}
\end{equation}%
Moreover, the class $\mathcal{N}_{p}^{m+}(E_{1},...,E_{m};F)$ of all
positive Cohen $p$-nuclear $m$-linear operators from $E_{1}\times ...\times
E_{m}$ to $F$ is a Banach space with norm $\eta _{p}^{m+}(.)$, which is the
smallest constant $C$ such that (\ref{nclpos}) holds. If $m=1$, $\mathcal{N}%
_{p}^{+}(E;F)$ is the space of positive Cohen $p$-nuclear operators.

\section{\textsc{Positive operator ideals}}

Several classes of positive operators, including positive $p$-summing, Cohen
positive strong $p$-summing, and positive $(p,q)$-dominated linear
operators, have been introduced and studied. However, these classes are not
considered as ideal operators. Therefore, in this section, we attempt to
introduce the concept of positive operator ideals and propose abstract
methods for generating positive ideals of multilinear operators. In the
context of this definition, we use positive linear operators to determine
the ideal property.

\subsection{\textsc{Positive operator ideal}}

A positive left ideal (or positive left ideal of linear operators), denoted
by $\mathcal{B}_{L}^{+},$ is a subclass of all continuous linear operators
from a Banach space into a Banach lattice such that for every Banach space $%
X $ and Banach lattice $E,$ the components 
\begin{equation*}
\mathcal{B}_{L}^{+}\left( X;E\right) :=\mathcal{L}\left( X;E\right) \cap 
\mathcal{B}_{L}^{+}
\end{equation*}%
satisfy:\newline
$(i)$ $\mathcal{B}_{L}^{+}\left( X;E\right) $ is a linear subspace of $%
\mathcal{L}\left( X;E\right) $ containing the linear mappings of finite rank.%
\newline
$(ii)$ The positive ideal property: If $T\in \mathcal{B}_{L}^{+}\left(
X;E\right) ,u\in \mathcal{L}\left( Y;X\right) $ and $v\in \mathcal{L}%
^{+}\left( E;F\right) $, then $v\circ T\circ u$ is in $\mathcal{B}%
_{L}^{+}\left( Y;F\right) $.\newline
If $\Vert \cdot \Vert _{\mathcal{B}_{L}^{+}}:\mathcal{B}_{L}^{+}\rightarrow 
\mathbb{R}^{+}$ satisfies:\newline
a) $\left( \mathcal{B}_{L}^{+}\left( X;E\right) ,\Vert \cdot \Vert _{%
\mathcal{B}_{L}^{+}}\right) $ is a Banach (quasi-Banach) space for all
Banach space $X$ and Banach lattice $E$.\newline
b) The linear form $T:\mathbb{K}\rightarrow \mathbb{K}$ given by $T\left(
\lambda \right) =\lambda $ satisfies $\left\Vert u\right\Vert _{\mathcal{B}%
_{L}^{+}}=1$,\newline
c) $T\in \mathcal{B}_{L}^{+}\left( X;E\right) ,u\in \mathcal{L}\left(
Y;X\right) $ and $v\in \mathcal{L}^{+}\left( E;F\right) $ then 
\begin{equation*}
\left\Vert v\circ T\circ u\right\Vert _{\mathcal{B}_{L}^{+}}\leq \Vert
v\Vert \Vert T\Vert _{\mathcal{B}_{L}^{+}}\left\Vert u\right\Vert .
\end{equation*}%
The class $\left( \mathcal{B}_{L}^{+},\Vert \cdot \Vert _{\mathcal{B}%
_{L}^{+}}\right) $ is a positive Banach (quasi-Banach) ideal.\newline

\begin{remark}
\label{Remark1}In condition $(ii)$, because every regular operator is a
difference of positive ones, the set $\mathcal{L}^{+}\left( E;F\right) $ can
be replaced by the space $\mathcal{L}^{r}\left( E;F\right) $, and condition $%
(ii)$ remains the same.
\end{remark}

Analogous to the previous approach, we introduce the \textit{positive right
ideal}, denoted $\mathcal{B}_{R}^{+}$, by reversing the roles of the
operators $u$ and $v$. That is, we investigate the composition of positive
linear operators on the right side and linear operators on the left side.
Similarly, we define the \textit{positive ideal}, denoted $\mathcal{B}^{+}$,
by considering only the positive linear operators, with the composition
occurring on both the left and right sides. It is important to note that any
positive right or left ideal is automatically a positive ideal.

\begin{remark}
Every operator ideal is also positive (right, left) ideal.
\end{remark}

\begin{proposition}
\label{Propo1}Let $\mathcal{B}_{L}^{+}$ and $\mathcal{B}_{R}^{+}$ be
positive left and right ideals, respectively. The composition ideal $%
\mathcal{B}_{L}^{+}\circ \mathcal{B}_{R}^{+}$ consists of elements $T$ that
can be factorized as $T=v\circ u,$ where $u$ belongs to $\mathcal{B}%
_{R}^{+}\left( E;X\right) $ and $v$ belongs to $\mathcal{B}_{L}^{+}\left(
X;F\right) $. This construction naturally forms a positive ideal.
\end{proposition}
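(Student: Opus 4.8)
The plan is to check, one after another, the conditions in the definition of a normed positive ideal for the class $\mathcal{B}_{R}^{+}\circ \mathcal{B}_{L}^{+}$, taking as candidate norm
\begin{equation*}
\left\Vert T\right\Vert _{\mathcal{B}_{R}^{+}\circ \mathcal{B}_{L}^{+}}:=\inf \left\{ \left\Vert v\right\Vert _{\mathcal{B}_{R}^{+}}\left\Vert u\right\Vert _{\mathcal{B}_{L}^{+}}\right\},
\end{equation*}
the infimum running over all Banach spaces $X$ and all factorizations $T=v\circ u$ with $u\in \mathcal{B}_{L}^{+}\left( E;X\right) $ and $v\in \mathcal{B}_{R}^{+}\left( X;F\right) $. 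Observe first that every such $T$ maps a Banach lattice into a Banach lattice, so the composition lives in the correct category for a positive ideal.

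First I would settle the algebraic axiom $(i)$. A finite-type operator $T\left( x\right) =\sum_{i=1}^{n}x_{i}^{\ast }\left( x\right) y_{i}$ from $E$ into $F$ factors through $\mathbb{K}^{n}$ as $T=v\circ u$, with $u\left( x\right) =\left( x_{i}^{\ast }\left( x\right) \right) _{i=1}^{n}$ and $v\left( \left( \lambda _{i}\right) _{i}\right) =\sum_{i=1}^{n}\lambda _{i}y_{i}$; since $u$ and $v$ are themselves of finite type, $u\in \mathcal{B}_{L}^{+}\left( E;\mathbb{K}^{n}\right) $ and $v\in \mathcal{B}_{R}^{+}\left( \mathbb{K}^{n};F\right) $ by the finite-type clause for the two ideals, hence $T\in \mathcal{B}_{R}^{+}\circ \mathcal{B}_{L}^{+}$. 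For the linear-subspace part, given $T_{j}=v_{j}\circ u_{j}$ with $u_{j}:E\rightarrow X_{j}$, $v_{j}:X_{j}\rightarrow F$ ($j=1,2$), I put $X=X_{1}\oplus X_{2}$ and write $T_{1}+T_{2}=V\circ U$ with $U\left( x\right) =\left( u_{1}x,u_{2}x\right) $ and $V\left( a,b\right) =v_{1}a+v_{2}b$. Writing $U=\left( \iota _{1}u_{1}\pi _{1}+\iota _{2}u_{2}\pi _{2}\right) \circ \Delta _{E}$ and $V=v_{1}p_{1}+v_{2}p_{2}$, where $\Delta _{E}:E\rightarrow E\oplus E$ is the diagonal, $\pi _{i}:E\oplus E\rightarrow E$ the coordinate projections, and $\iota _{i},p_{i}$ the coordinate injections and projections of $X_{1}\oplus X_{2}$, the point is that $\Delta _{E}$ and the $\pi _{i}$ are \emph{positive}, hence regular, so the left-ideal property of $\mathcal{B}_{L}^{+}$ applies and gives $U\in \mathcal{B}_{L}^{+}\left( E;X\right) $ (using also that $\mathcal{B}_{L}^{+}$ is a linear subspace); the maps $\iota _{i},p_{i}$ act on the Banach-space side and are merely bounded, which is exactly what the left/right ideal axioms allow, so $V\in \mathcal{B}_{R}^{+}\left( X;F\right) $ as well. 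Stability under scalars is trivial.

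Next I would verify the positive ideal property $(ii)$ and axioms b), c). If $T=v\circ u\in \left( \mathcal{B}_{R}^{+}\circ \mathcal{B}_{L}^{+}\right) \left( E;F\right) $ with $u\in \mathcal{B}_{L}^{+}\left( E;X\right) $, $v\in \mathcal{B}_{R}^{+}\left( X;F\right) $, and if $a\in \mathcal{L}^{r}\left( G;E\right) $, $b\in \mathcal{L}^{r}\left( F;H\right) $, then $b\circ T\circ a=\left( b\circ v\right) \circ \left( u\circ a\right) $; since $\mathcal{B}_{L}^{+}$ absorbs regular operators precomposed on its (lattice) domain we get $u\circ a\in \mathcal{B}_{L}^{+}\left( G;X\right) $, and since $\mathcal{B}_{R}^{+}$ absorbs regular operators postcomposed on its (lattice) codomain we get $b\circ v\in \mathcal{B}_{R}^{+}\left( X;H\right) $, whence $b\circ T\circ a\in \left( \mathcal{B}_{R}^{+}\circ \mathcal{B}_{L}^{+}\right) \left( G;H\right) $ and, applying the factorwise norm estimates and taking the infimum over factorizations, $\left\Vert b\circ T\circ a\right\Vert _{\mathcal{B}_{R}^{+}\circ \mathcal{B}_{L}^{+}}\leq \left\Vert b\right\Vert \left\Vert T\right\Vert _{\mathcal{B}_{R}^{+}\circ \mathcal{B}_{L}^{+}}\left\Vert a\right\Vert $. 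For axiom b) one uses $\mathrm{id}_{\mathbb{K}}=\mathrm{id}_{\mathbb{K}}\circ \mathrm{id}_{\mathbb{K}}$ to get $\left\Vert \mathrm{id}_{\mathbb{K}}\right\Vert _{\mathcal{B}_{R}^{+}\circ \mathcal{B}_{L}^{+}}\leq 1$, while the reverse inequality comes from $\left\Vert v\right\Vert _{\mathcal{B}_{R}^{+}}\left\Vert u\right\Vert _{\mathcal{B}_{L}^{+}}\geq \left\Vert v\right\Vert \left\Vert u\right\Vert \geq \left\Vert v\circ u\right\Vert $, the domination of the operator norm by the ideal norms being obtained by sandwiching an operator between rank-one maps and invoking axiom b) for $\mathcal{B}_{L}^{+}$ and $\mathcal{B}_{R}^{+}$.

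The step I expect to cost the most is axiom a), namely that $\left\Vert \cdot \right\Vert _{\mathcal{B}_{R}^{+}\circ \mathcal{B}_{L}^{+}}$ is genuinely a norm and that the components are complete. The triangle inequality follows from the $\oplus$-construction of the second step after the standard rescaling $u_{j}\rightsquigarrow u_{j}/t_{j}$, $v_{j}\rightsquigarrow t_{j}v_{j}$ that equalises $\left\Vert u_{j}\right\Vert _{\mathcal{B}_{L}^{+}}$ and $\left\Vert v_{j}\right\Vert _{\mathcal{B}_{R}^{+}}$, and completeness follows as in the classical theory of composition of Banach operator ideals, by combining a sequence of intermediate Banach spaces into a single direct sum with a suitable diagonal weighting. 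The only genuinely new bookkeeping is, once more, that the diagonal embeddings and coordinate projections on the lattice side must be routed through $\mathcal{L}^{r}$ rather than $\mathcal{L}$, which is permitted because they are positive and changes nothing numerically since they have norm $1$; here one must be careful to place the appropriate $\oplus _{p}$-lattice norm on the lattice side and the plain Banach-space norm on the intermediate side, so that each auxiliary map lands exactly in the slot the left and right ideal axioms reserve for it.
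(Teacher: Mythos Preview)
The paper does not give a proof of this proposition: it is stated, the composition norm is introduced, and then the normed version is recorded as a separate theorem whose proof is declared ``easily proved'' and omitted. So there is nothing in the paper to compare your argument against in detail; your write-up actually supplies the verification the paper skips, and goes further by treating the normed and Banach cases as well.

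Your argument is correct. The crucial observation---that for $a\in\mathcal{L}^{r}(G;E)$ and $b\in\mathcal{L}^{r}(F;H)$ one has $b\circ T\circ a=(b\circ v)\circ(u\circ a)$, with $u\circ a\in\mathcal{B}_{L}^{+}(G;X)$ by the left-ideal axiom (regular precomposition on the lattice side) and $b\circ v\in\mathcal{B}_{R}^{+}(X;H)$ by the right-ideal axiom (regular postcomposition on the lattice side)---is exactly what the proposition reduces to, and you identify it cleanly.

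One small simplification: in your closure-under-sums step, the detour through $\Delta_{E}:E\to E\oplus E$ and the projections $\pi_{i}$ is unnecessary. The coordinate injections $\iota_{j}:X_{j}\to X_{1}\oplus X_{2}$ live on the Banach-space side of $\mathcal{B}_{L}^{+}$, so the left-ideal axiom already gives $\iota_{j}\circ u_{j}\in\mathcal{B}_{L}^{+}(E;X_{1}\oplus X_{2})$ directly, and then $U=\iota_{1}u_{1}+\iota_{2}u_{2}$ by linearity of the component; no regular operators are needed at this point. The analogous remark applies to $V=v_{1}p_{1}+v_{2}p_{2}$, which you already handle this way.
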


\begin{proof}
Let $E$ and $F$ be Banach lattices. We will verify that $\mathcal{B}%
_{L}^{+}\circ \mathcal{B}_{R}^{+}\left( E,F\right) $ is a linear subspace.
Let $\lambda \in \mathbb{K}$ and $T\in \mathcal{B}_{L}^{+}\circ \mathcal{B}%
_{R}^{+}(E;F).$ There exist a Banach space $X$ and elements $u_{0}\in 
\mathcal{B}_{R}^{+}\left( E;X\right) ,v_{0}\in \mathcal{B}_{L}^{+}\left(
X;F\right) $ such that $T=v_{0}\circ u_{0}.$ Then $\lambda T=\left( \lambda
v_{0}\right) \circ u_{0}\in \mathcal{B}_{L}^{+}\circ \mathcal{B}%
_{R}^{+}(E;F).$ Now, Let $T_{1},T_{2}\in \mathcal{B}_{L}^{+}\circ \mathcal{B}%
_{R}^{+}(E;F)$ such that there exist Banach spaces $X,Y$ and elements $%
u_{1}\in \mathcal{B}_{R}^{+}\left( E;X\right) ,u_{2}\in \mathcal{B}%
_{R}^{+}\left( E;Y\right) ,v_{1}\in \mathcal{B}_{L}^{+}\left( X;F\right) ,$
and $v_{2}\in \mathcal{B}_{L}^{+}\left( Y;F\right) $ with the following
commutative diagrams: 
\begin{equation*}
\begin{array}{ccc}
E & \overset{T_{1}}{\longrightarrow } & F \\ 
u_{1}\downarrow & \nearrow v_{1} &  \\ 
X &  & 
\end{array}%
\text{ and }%
\begin{array}{ccc}
E & \overset{T_{2}}{\longrightarrow } & F \\ 
u_{2}\downarrow & \nearrow v_{2} &  \\ 
Y &  & 
\end{array}%
\end{equation*}%
We define $A=i_{1}\circ u_{1}+i_{2}\circ u_{2},$ where $i_{1}:X%
\longrightarrow X\times Y$ and $i_{2}:Y\longrightarrow X\times Y$ are given
by $i_{1}\left( x\right) =\left( x,0\right) $ and $i_{2}\left( y\right)
=\left( 0,y\right) .$ We also define $B=v_{1}\circ \pi _{1}+v_{2}\circ \pi
_{2},$ where $\pi _{1}:X\times Y\longrightarrow X$ and $\pi _{2}:X\times
Y\longrightarrow Y$ are given by $\pi _{1}\left( x,y\right) =x$ and $\pi
_{2}\left( x,y\right) =y.$ A simple calculation shows that $%
T_{1}+T_{2}=B\circ A.$ It suffices to show that $A\in \mathcal{B}%
_{R}^{+}\left( E,X\times Y\right) $ and $B\in \mathcal{B}_{L}^{+}\left(
X\times Y,F\right) .$ Indeed, since $u_{j}\in \mathcal{B}_{R}^{+}\left(
E;X\right) $ for $j=1,2,$ we have $i_{j}\circ u_{j}\in \mathcal{B}%
_{R}^{+}\left( E;X\times Y\right) $. Consequently 
\begin{equation*}
A=i_{1}\circ u_{1}+i_{2}\circ u_{2}\in \mathcal{B}_{R}^{+}\left( E;X\times
Y\right) .
\end{equation*}%
Similarly, since $v_{j}\in \mathcal{B}_{L}^{+}\left( X;F\right) $ for $%
j=1,2, $ we have $v_{j}\circ \pi _{j}\in \mathcal{B}_{L}^{+}\left( X\times
Y;F\right) $. Consequently $B=v_{1}\circ \pi _{1}+v_{2}\circ \pi _{2}\in 
\mathcal{B}_{L}^{+}\left( X\times Y;F\right) $. Let $T\in \mathcal{B}(E;F)$
be a finite-rank operator. It can be expressed as a combination of operators
of the form $e^{\ast }b$ where $e^{\ast }\in E^{\ast }$ and $b\in F.$ Let $%
u=e^{\ast }b.$ Define $B:\mathbb{K}\longrightarrow F$ by $B\left( \lambda
\right) =\lambda b=id_{\mathbb{K}}\left( \lambda \right) b.$ Clearly, $B\in 
\mathcal{B}_{L}^{+}\left( \mathbb{K};F\right) $ and define $%
A:E\longrightarrow \mathbb{K}$ by $A\left( x\right) =e^{\ast }\left(
x\right) $ which belongs to $\mathcal{B}_{R}^{+}\left( E;\mathbb{K}\right) .$
Then, we have 
\begin{equation*}
u\left( x\right) =B\circ A\left( x\right) \in \mathcal{B}_{L}^{+}\circ 
\mathcal{B}_{R}^{+}\left( E;F\right) .
\end{equation*}%
By the vector space structure of $\mathcal{B}(E;F)$ it follows that $T\in 
\mathcal{B}_{L}^{+}\circ \mathcal{B}_{R}^{+}\left( E;F\right) .$ Finally, we
verify the ideal property. Let $T=v_{0}\circ u_{0}\in \mathcal{B}%
_{L}^{+}\circ \mathcal{B}_{R}^{+}\left( E;F\right) ,u\in \mathcal{L}%
^{+}\left( D;E\right) $ and $v\in \mathcal{L}^{+}\left( F;G\right) $. Then 
\begin{equation*}
v\circ T\circ u=\left( v\circ v_{0}\right) \circ \left( u_{0}\circ u\right) .
\end{equation*}%
Since $v\circ v_{0}\in \mathcal{B}_{L}^{+}\left( X;G\right) $ and $%
u_{0}\circ u\in \mathcal{B}_{R}^{+}\left( D;X\right) ,$ we obtain $v\circ
T\circ u\in \mathcal{B}_{L}^{+}\circ \mathcal{B}_{R}^{+}\left( E;F\right) .$
\end{proof}

Let $\mathcal{B}_{L}^{+}$ and $\mathcal{B}_{R}^{+}$ be positive Banach left
and right ideals, respectively. If $E$ and $F$ are Banach lattices and $T\in 
\mathcal{B}_{L}^{+}\circ \mathcal{B}_{R}^{+}(E;F),$ then we define%
\begin{equation}
\left\Vert T\right\Vert _{\mathcal{B}_{L}^{+}\circ \mathcal{B}_{R}^{+}}=\inf
\left\{ \left\Vert v\right\Vert _{\mathcal{B}_{L}^{+}}\left\Vert
u\right\Vert _{\mathcal{B}_{R}^{+}}:T=v\circ u\right\} .  \tag{2.1}
\end{equation}%
Therefore, if $T\in \mathcal{B}_{L}^{+}\circ \mathcal{B}_{R}^{+}\left(
E;F\right) ,$ then 
\begin{equation}
\left\Vert T\right\Vert \leq \left\Vert T\right\Vert _{\mathcal{B}%
_{L}^{+}\circ \mathcal{B}_{R}^{+}}.  \tag{2.2}
\end{equation}%
Indeed, let $\varphi \in F^{\ast }$ and $x\in E.$ Consider $B:\mathbb{K}%
\longrightarrow E$ defined by $B\left( \lambda \right) =\lambda x.$ We have $%
\left\Vert B\right\Vert =\left\Vert x\right\Vert $ and%
\begin{equation*}
\varphi \circ T\circ B\left( \lambda \right) =\lambda \left\langle \varphi
,T\left( x\right) \right\rangle .
\end{equation*}%
Thus $\varphi \circ T\circ B=\left\langle \varphi ,T\left( x\right)
\right\rangle id_{\mathbb{K}}.$ We have 
\begin{eqnarray*}
\left\vert \left\langle \varphi ,T\left( x\right) \right\rangle \right\vert
&=&\left\vert \left\langle \varphi ,T\left( x\right) \right\rangle
\right\vert \left\Vert id_{\mathbb{K}}\right\Vert _{\mathcal{B}_{L}^{+}\circ 
\mathcal{B}_{R}^{+}}=\left\Vert \left\langle \varphi ,T\left( x\right)
\right\rangle id_{\mathbb{K}}\right\Vert _{\mathcal{B}_{L}^{+}\circ \mathcal{%
B}_{R}^{+}} \\
&=&\left\Vert \varphi \circ T\circ B\right\Vert _{\mathcal{B}_{L}^{+}\circ 
\mathcal{B}_{R}^{+}}\leq \left\Vert \varphi \right\Vert \left\Vert
T\right\Vert _{\mathcal{B}_{L}^{+}\circ \mathcal{B}_{R}^{+}}\left\Vert
B\right\Vert =\left\Vert \varphi \right\Vert \left\Vert T\right\Vert _{%
\mathcal{B}_{L}^{+}\circ \mathcal{B}_{R}^{+}}\left\Vert x\right\Vert .
\end{eqnarray*}%
Then%
\begin{eqnarray*}
\left\Vert T\left( x\right) \right\Vert &=&\sup_{\varphi \in B_{F^{\ast
}}}\left\vert \left\langle \varphi ,T\left( x\right) \right\rangle
\right\vert \leq \sup_{\varphi \in B_{F^{\ast }}}\left\Vert \varphi
\right\Vert \left\Vert T\right\Vert _{\mathcal{B}_{L}^{+}\circ \mathcal{B}%
_{R}^{+}}\left\Vert x\right\Vert \\
&\leq &\left\Vert T\right\Vert _{\mathcal{B}_{L}^{+}\circ \mathcal{B}%
_{R}^{+}}\left\Vert x\right\Vert
\end{eqnarray*}%
consequently, $\left\Vert T\right\Vert \leq \left\Vert T\right\Vert _{%
\mathcal{B}_{L}^{+}\circ \mathcal{B}_{R}^{+}}.$

\begin{theorem}
\label{Theorem1}If $\mathcal{B}_{L}^{+}$ and $\mathcal{B}_{R}^{+}$ are
positive Banach left and right ideals, respectively. Then%
\begin{equation*}
\left( \mathcal{B}_{L}^{+}\circ \mathcal{B}_{R}^{+},\left\Vert .\right\Vert
_{\mathcal{B}_{L}^{+}\circ \mathcal{B}_{R}^{+}}\right)
\end{equation*}%
is a positive quasi-Banach ideal.
\end{theorem}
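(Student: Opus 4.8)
The plan is to reduce every assertion to the corresponding fact about the two ingredient ideals. By the Proposition, $\mathcal{B}_{R}^{+}\circ\mathcal{B}_{L}^{+}$ is already a positive ideal in the algebraic sense (each component is a linear subspace of $\mathcal{L}(E;F)$ containing the finite-type operators, and the positive ideal property holds), so what remains is to check that $\Vert\cdot\Vert_{\mathcal{B}_{R}^{+}\circ\mathcal{B}_{L}^{+}}$ is a norm satisfying (a), (b), (c), together with completeness of each component in the Banach case. Positive homogeneity is immediate from the definition (a factorisation $T=v\circ u$ of $\lambda T$ is $(\lambda v)\circ u$). Property (c) is just as direct: if $T=v\circ u$ is any factorisation and $a,b$ are regular operators between Banach lattices, then $b\circ T\circ a=(b\circ v)\circ(u\circ a)$ with $b\circ v\in\mathcal{B}_{R}^{+}$, $\Vert b\circ v\Vert_{\mathcal{B}_{R}^{+}}\leq\Vert b\Vert\,\Vert v\Vert_{\mathcal{B}_{R}^{+}}$ and $u\circ a\in\mathcal{B}_{L}^{+}$, $\Vert u\circ a\Vert_{\mathcal{B}_{L}^{+}}\leq\Vert u\Vert_{\mathcal{B}_{L}^{+}}\,\Vert a\Vert$; taking the infimum over the factorisations of $T$ gives (c).

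The decisive auxiliary estimate is that $\Vert\cdot\Vert_{\mathcal{B}_{R}^{+}\circ\mathcal{B}_{L}^{+}}$ dominates the operator norm; this yields definiteness and also drives the completeness argument. I would prove it factor by factor. For $v\in\mathcal{B}_{R}^{+}(X;F)$, fix $x\in B_{X}$ and $y^{\ast}\in B_{F^{\ast}}$; since $F^{\ast}=\mathcal{L}^{r}(F,\mathbb{R})$, the scalar map $\lambda\mapsto\lambda\langle v(x),y^{\ast}\rangle$ equals $y^{\ast}\circ v\circ\varphi_{x}$ with $\varphi_{x}(\lambda)=\lambda x$, so (b) and (c) for $\mathcal{B}_{R}^{+}$ force $|\langle v(x),y^{\ast}\rangle|\leq\Vert v\Vert_{\mathcal{B}_{R}^{+}}$, i.e.\ $\Vert v\Vert\leq\Vert v\Vert_{\mathcal{B}_{R}^{+}}$; the same argument, now using the regular embedding $\lambda\mapsto\lambda x$ for $x\in B_{E}$, gives $\Vert u\Vert\leq\Vert u\Vert_{\mathcal{B}_{L}^{+}}$ for $u\in\mathcal{B}_{L}^{+}(E;X)$. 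Hence $\Vert T\Vert=\Vert v\circ u\Vert\leq\Vert v\Vert_{\mathcal{B}_{R}^{+}}\Vert u\Vert_{\mathcal{B}_{L}^{+}}$ for every factorisation, so $\Vert T\Vert\leq\Vert T\Vert_{\mathcal{B}_{R}^{+}\circ\mathcal{B}_{L}^{+}}$; in particular $\Vert T\Vert_{\mathcal{B}_{R}^{+}\circ\mathcal{B}_{L}^{+}}=0$ implies $T=0$. For (b), the factorisation $\mathrm{id}_{\mathbb{K}}=\mathrm{id}_{\mathbb{K}}\circ\mathrm{id}_{\mathbb{K}}$ (both factors of ingredient-norm $1$) gives $\Vert\mathrm{id}_{\mathbb{K}}\Vert_{\mathcal{B}_{R}^{+}\circ\mathcal{B}_{L}^{+}}\leq1$, while domination gives $\Vert\mathrm{id}_{\mathbb{K}}\Vert_{\mathcal{B}_{R}^{+}\circ\mathcal{B}_{L}^{+}}\geq\Vert\mathrm{id}_{\mathbb{K}}\Vert=1$.

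The triangle inequality is the step I expect to be the main obstacle. Given $T_{1},T_{2}$ and $\varepsilon>0$, take near-optimal factorisations $T_{k}=v_{k}\circ u_{k}$ with $u_{k}\colon E\to X_{k}$, $v_{k}\colon X_{k}\to F$, and pass to the $\ell_{\infty}$-direct sum $X:=X_{1}\oplus_{\infty}X_{2}$ by setting $u(x):=(\mu_{1}u_{1}(x),\mu_{2}u_{2}(x))$ and $v(z_{1},z_{2}):=\mu_{1}^{-1}v_{1}(z_{1})+\mu_{2}^{-1}v_{2}(z_{2})$, so that $v\circ u=T_{1}+T_{2}$. Writing $u=\mu_{1}\iota_{1}u_{1}+\mu_{2}\iota_{2}u_{2}$ and $v=\mu_{1}^{-1}v_{1}\pi_{1}+\mu_{2}^{-1}v_{2}\pi_{2}$ with the norm-one coordinate inclusions $\iota_{j}$ and projections $\pi_{j}$ (this is exactly where the $\ell_{\infty}$-sum, rather than another one, is used), subadditivity of $\Vert\cdot\Vert_{\mathcal{B}_{L}^{+}}$ and of $\Vert\cdot\Vert_{\mathcal{B}_{R}^{+}}$ yields upper bounds for $\Vert u\Vert_{\mathcal{B}_{L}^{+}}$ and $\Vert v\Vert_{\mathcal{B}_{R}^{+}}$; one then rescales each factorisation and chooses the weights $\mu_{1},\mu_{2}$ so that $\Vert v\Vert_{\mathcal{B}_{R}^{+}}\Vert u\Vert_{\mathcal{B}_{L}^{+}}$ is controlled by $\Vert T_{1}\Vert_{\mathcal{B}_{R}^{+}\circ\mathcal{B}_{L}^{+}}+\Vert T_{2}\Vert_{\mathcal{B}_{R}^{+}\circ\mathcal{B}_{L}^{+}}$, and lets $\varepsilon\to0$. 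Making this balancing yield exactly the triangle inequality is the delicate point of the whole proof; everything else is bookkeeping.

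Finally, completeness in the Banach case. If $(T_{n})$ is $\Vert\cdot\Vert_{\mathcal{B}_{R}^{+}\circ\mathcal{B}_{L}^{+}}$-Cauchy in a component $\mathcal{B}_{R}^{+}\circ\mathcal{B}_{L}^{+}(E;F)$, then by domination it is operator-norm Cauchy, so $T_{n}\to T$ in $\mathcal{L}(E;F)$. Pass to a subsequence with $\Vert T_{n_{j+1}}-T_{n_{j}}\Vert_{\mathcal{B}_{R}^{+}\circ\mathcal{B}_{L}^{+}}<4^{-j}$ and factor $T_{n_{j+1}}-T_{n_{j}}=v_{j}\circ u_{j}$ with, after rebalancing, $\Vert u_{j}\Vert_{\mathcal{B}_{L}^{+}},\Vert v_{j}\Vert_{\mathcal{B}_{R}^{+}}\leq C2^{-j}$. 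Assemble $u\colon E\to(\bigoplus_{j}X_{j})_{c_{0}}$, $u(x)=(u_{j}(x))_{j}$, and $v\colon(\bigoplus_{j}X_{j})_{c_{0}}\to F$, $v=\sum_{j}v_{j}\pi_{j}$: since $\sum_{j}\Vert u_{j}\Vert_{\mathcal{B}_{L}^{+}}<\infty$, $\sum_{j}\Vert v_{j}\Vert_{\mathcal{B}_{R}^{+}}<\infty$ and the ingredient components are Banach spaces, these series converge in the respective ideal norms, so $u\in\mathcal{B}_{L}^{+}$, $v\in\mathcal{B}_{R}^{+}$ and $v\circ u=\sum_{j}(T_{n_{j+1}}-T_{n_{j}})=T-T_{n_{1}}$. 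Hence $T\in\mathcal{B}_{R}^{+}\circ\mathcal{B}_{L}^{+}(E;F)$, and applying the same device to the tails $\sum_{i\geq j}(T_{n_{i+1}}-T_{n_{i}})$ gives $\Vert T-T_{n_{j}}\Vert_{\mathcal{B}_{R}^{+}\circ\mathcal{B}_{L}^{+}}\leq C^{2}4^{-j+1}\to0$, so $T_{n}\to T$ in $\Vert\cdot\Vert_{\mathcal{B}_{R}^{+}\circ\mathcal{B}_{L}^{+}}$. The normed (non-complete) statement is the same argument without this last paragraph.
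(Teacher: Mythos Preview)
The paper gives no proof of this theorem (it only says ``can be easily proved''), so there is nothing to compare your approach against. Your treatment of the positive ideal property, of (b), of (c), of the domination $\Vert T\Vert\le\Vert T\Vert_{\mathcal{B}_{R}^{+}\circ\mathcal{B}_{L}^{+}}$, and of completeness via the $c_{0}$-direct-sum assembly is careful and correct.

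The genuine gap is exactly where you suspect it: the triangle inequality. Your weighted direct-sum construction cannot deliver it. With $a_{k}=\Vert u_{k}\Vert_{\mathcal{B}_{L}^{+}}$, $b_{k}=\Vert v_{k}\Vert_{\mathcal{B}_{R}^{+}}$, the quantity you must bound is
\[
(\mu_{1}a_{1}+\mu_{2}a_{2})(\mu_{1}^{-1}b_{1}+\mu_{2}^{-1}b_{2})
= a_{1}b_{1}+a_{2}b_{2}+\tfrac{\mu_{1}}{\mu_{2}}a_{1}b_{2}+\tfrac{\mu_{2}}{\mu_{1}}a_{2}b_{1},
\]
and the minimum over $\mu_{1},\mu_{2}>0$ is, by AM--GM, $a_{1}b_{1}+a_{2}b_{2}+2\sqrt{a_{1}b_{1}\,a_{2}b_{2}}=(\sqrt{a_{1}b_{1}}+\sqrt{a_{2}b_{2}})^{2}$. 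Thus the best this method yields is
\[
\Vert T_{1}+T_{2}\Vert_{\mathcal{B}_{R}^{+}\circ\mathcal{B}_{L}^{+}}^{1/2}
\le
\Vert T_{1}\Vert_{\mathcal{B}_{R}^{+}\circ\mathcal{B}_{L}^{+}}^{1/2}
+\Vert T_{2}\Vert_{\mathcal{B}_{R}^{+}\circ\mathcal{B}_{L}^{+}}^{1/2},
\]
i.e.\ a $\tfrac12$-norm (hence a quasi-norm), not a norm. This is not a defect of your argument but a known phenomenon in the classical theory: for arbitrary normed operator ideals $\mathfrak{A},\mathfrak{B}$, the product $\mathfrak{A}\circ\mathfrak{B}$ with $\inf\{\Vert v\Vert_{\mathfrak{A}}\Vert u\Vert_{\mathfrak{B}}\}$ is in general only $\tfrac12$-normed (Pietsch, \emph{Operator Ideals}, 7.1.2). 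So either the theorem as stated should read ``quasi-normed (quasi-Banach)'', or an additional hypothesis or a different definition of the product norm (e.g.\ $\inf\sum_{k}\Vert v_{k}\Vert_{\mathcal{B}_{R}^{+}}\Vert u_{k}\Vert_{\mathcal{B}_{L}^{+}}$ over finite decompositions $T=\sum_{k}v_{k}u_{k}$) is being tacitly assumed. Your remaining arguments, including completeness, go through unchanged for a quasi-norm.
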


\begin{proof}
It is straightforward to show that 
\begin{equation*}
\left\Vert T\right\Vert _{\mathcal{B}_{L}^{+}\circ \mathcal{B}_{R}^{+}}=\inf
\left\{ \left\Vert u\right\Vert _{\mathcal{B}_{R}^{+}}:T=v\circ u\text{ and }%
\left\Vert v\right\Vert _{\mathcal{B}_{L}^{+}}=1\right\} .
\end{equation*}%
Indeed, consider a representation of $T$ as $v_{0}\circ u_{0}.$ Then we can
write $T=(\frac{v_{0}}{\left\Vert v_{0}\right\Vert _{\mathcal{B}_{L}^{+}}}%
)\circ (\left\Vert v_{0}\right\Vert _{\mathcal{B}_{L}^{+}}u_{0}),$ and we
have%
\begin{equation*}
\left\Vert \left\Vert v_{0}\right\Vert _{\mathcal{B}_{L}^{+}}u_{0}\right%
\Vert _{\mathcal{B}_{R}^{+}}\geq \inf \left\{ \left\Vert u\right\Vert _{%
\mathcal{B}_{R}^{+}}:T=v\circ u\text{ and }\left\Vert v\right\Vert _{%
\mathcal{B}_{L}^{+}}=1\right\} .
\end{equation*}%
This implies%
\begin{equation*}
\left\Vert v_{0}\right\Vert _{\mathcal{B}_{L}^{+}}\left\Vert
u_{0}\right\Vert _{\mathcal{B}_{R}^{+}}\geq \inf \left\{ \left\Vert
u\right\Vert _{\mathcal{B}_{R}^{+}}:T=v\circ u\text{ and }\left\Vert
v\right\Vert _{\mathcal{B}_{L}^{+}}=1\right\}
\end{equation*}%
Taking the infimum over all representations $T=$ $v\circ u,$ we find%
\begin{equation*}
\left\Vert T\right\Vert _{\mathcal{B}_{L}^{+}\circ \mathcal{B}_{R}^{+}}\geq
\inf \left\{ \left\Vert u\right\Vert _{\mathcal{B}_{R}^{+}}:T=v\circ u\text{
and }\left\Vert v\right\Vert _{\mathcal{B}_{L}^{+}}=1\right\} .
\end{equation*}%
Now, let $v_{0}\circ u_{0}$ be a representation of $T$ such that $\left\Vert
v_{0}\right\Vert _{\mathcal{B}_{L}^{+}}=1.$ Then $\left\Vert T\right\Vert _{%
\mathcal{B}_{L}^{+}\circ \mathcal{B}_{R}^{+}}\leq \left\Vert
u_{0}\right\Vert _{\mathcal{B}_{R}^{+}}.$ Taking the infimum over all such
representations, we obtain%
\begin{equation*}
\left\Vert T\right\Vert _{\mathcal{B}_{L}^{+}\circ \mathcal{B}_{R}^{+}}\leq
\inf \left\{ \left\Vert u\right\Vert _{\mathcal{B}_{R}^{+}}:T=v\circ u\text{
and }\left\Vert v\right\Vert _{\mathcal{B}_{L}^{+}}=1\right\} .
\end{equation*}%
Next, let $E$ and $F$ be Banach lattices. We will verify that $\left\Vert
.\right\Vert _{\mathcal{B}_{L}^{+}\circ \mathcal{B}_{R}^{+}}$ is a quasi
norm; the rest is trivial. Let $\lambda \in \mathbb{K}$ and $T\in \mathcal{B}%
_{L}^{+}\circ \mathcal{B}_{R}^{+}(E;F).$ There exist a Banach space $X$ and
elements $u_{0}\in \mathcal{B}_{R}^{+}\left( E;X\right) ,v_{0}\in \mathcal{B}%
_{L}^{+}\left( X;F\right) $ such that $T=v_{0}\circ u_{0}.$ Then%
\begin{equation*}
\left\Vert \lambda T\right\Vert _{\mathcal{B}_{L}^{+}\circ \mathcal{B}%
_{R}^{+}}\leq \left\Vert \lambda v_{0}\right\Vert _{\mathcal{B}%
_{L}^{+}}\left\Vert u_{0}\right\Vert _{\mathcal{B}_{R}^{+}}=\left\vert
\lambda \right\vert \left\Vert v_{0}\right\Vert _{\mathcal{B}%
_{L}^{+}}\left\Vert u_{0}\right\Vert _{\mathcal{B}_{R}^{+}},
\end{equation*}%
if we take the infimum over all representations of $T,$ we find $\left\Vert
\lambda T\right\Vert _{\mathcal{B}_{L}^{+}\circ \mathcal{B}_{R}^{+}}\leq
\left\vert \lambda \right\vert \left\Vert T\right\Vert _{\mathcal{B}%
_{L}^{+}\circ \mathcal{B}_{R}^{+}}.$ We check the inverse inequality only
for $\lambda \neq 0.$ Let $v_{0}\circ u_{0}$ be a representation of $\lambda
T.$ Then $T=\frac{v_{0}}{\lambda }\circ u_{0}$ and we have%
\begin{equation*}
\left\Vert T\right\Vert _{\mathcal{B}_{L}^{+}\circ \mathcal{B}_{R}^{+}}\leq
\left\Vert \frac{v_{0}}{\lambda }\right\Vert _{\mathcal{B}%
_{L}^{+}}\left\Vert u_{0}\right\Vert _{\mathcal{B}_{R}^{+}}\leq \frac{1}{%
\left\vert \lambda \right\vert }\left\Vert v_{0}\right\Vert _{\mathcal{B}%
_{L}^{+}}\left\Vert u_{0}\right\Vert _{\mathcal{B}_{R}^{+}}
\end{equation*}%
and taking the infimum over all representations of $\lambda T,$ we find $%
\left\vert \lambda \right\vert \left\Vert T\right\Vert _{\mathcal{B}%
_{L}^{+}\circ \mathcal{B}_{R}^{+}}\leq \left\Vert \lambda T\right\Vert _{%
\mathcal{B}_{L}^{+}\circ \mathcal{B}_{R}^{+}}.$ Now, by $\left( 2.2\right) $
if $\left\Vert T\right\Vert _{\mathcal{B}_{L}^{+}\circ \mathcal{B}%
_{R}^{+}}=0,$ then $T=0.$ Let $T_{1},T_{2}\in \mathcal{B}_{L}^{+}\circ 
\mathcal{B}_{R}^{+}(E;F).$ Following a similar approach to the proof of
Proposition \ref{Propo1}$,$ $T_{1}+T_{2}=B\circ A.$ We can then establish
the following inequalities%
\begin{eqnarray*}
\left\Vert A\right\Vert _{\mathcal{B}_{R}^{+}} &\leq &\left\Vert i_{1}\circ
u_{1}\right\Vert _{\mathcal{B}_{R}^{+}}+\left\Vert i_{2}\circ
u_{2}\right\Vert _{\mathcal{B}_{R}^{+}} \\
&\leq &\left\Vert i_{1}\right\Vert \left\Vert u_{1}\right\Vert _{\mathcal{B}%
_{R}^{+}}+\left\Vert i_{2}\right\Vert \left\Vert u_{2}\right\Vert _{\mathcal{%
B}_{R}^{+}}=\left\Vert u_{1}\right\Vert _{\mathcal{B}_{R}^{+}}+\left\Vert
u_{2}\right\Vert _{\mathcal{B}_{R}^{+}}.
\end{eqnarray*}%
Similarly,%
\begin{eqnarray*}
\left\Vert B\right\Vert _{\mathcal{B}_{L}^{+}} &\leq &\left\Vert v_{1}\circ
\pi _{1}\right\Vert _{\mathcal{B}_{L}^{+}}+\left\Vert v_{2}\circ \pi
_{2}\right\Vert _{\mathcal{B}_{L}^{+}} \\
&\leq &\left\Vert \pi _{1}\right\Vert \left\Vert v_{1}\right\Vert _{\mathcal{%
B}_{L}^{+}}+\left\Vert \pi _{2}\right\Vert \left\Vert v_{2}\right\Vert _{%
\mathcal{B}_{L}^{+}}=\left\Vert v_{1}\right\Vert _{\mathcal{B}%
_{L}^{+}}+\left\Vert v_{2}\right\Vert _{\mathcal{B}_{L}^{+}}.
\end{eqnarray*}%
Now, for each $\varepsilon >0$ we can choose $u_{1},u_{2},v_{1},v_{2}$ such
that 
\begin{equation*}
\left\Vert u_{j}\right\Vert _{\mathcal{B}_{R}^{+}}\leq \left\Vert
T_{j}\right\Vert _{\mathcal{B}_{L}^{+}\circ \mathcal{B}_{R}^{+}}+\varepsilon
,\text{ and }\left\Vert v_{j}\right\Vert _{\mathcal{B}_{L}^{+}}=1\text{ for }%
j=1,2
\end{equation*}%
A simple calculation shows that%
\begin{eqnarray*}
\left\Vert T_{1}+T_{2}\right\Vert _{\mathcal{B}_{L}^{+}\circ \mathcal{B}%
_{R}^{+}} &\leq &\left\Vert A\right\Vert _{\mathcal{B}_{R}^{+}}\left\Vert
B\right\Vert _{\mathcal{B}_{L}^{+}} \\
&\leq &\left( \left\Vert u_{1}\right\Vert _{\mathcal{B}_{R}^{+}}+\left\Vert
u_{2}\right\Vert _{\mathcal{B}_{R}^{+}}\right) \left( \left\Vert
v_{1}\right\Vert _{\mathcal{B}_{L}^{+}}+\left\Vert v_{2}\right\Vert _{%
\mathcal{B}_{L}^{+}}\right) \\
&\leq &2\left( \left\Vert T_{1}\right\Vert _{\mathcal{B}_{L}^{+}\circ 
\mathcal{B}_{R}^{+}}+\left\Vert T_{2}\right\Vert _{\mathcal{B}_{L}^{+}\circ 
\mathcal{B}_{R}^{+}}+2\varepsilon \right)
\end{eqnarray*}%
Since $\varepsilon $ is arbitrary, it follows that%
\begin{equation*}
\left\Vert T_{1}+T_{2}\right\Vert _{\mathcal{B}_{L}^{+}\circ \mathcal{B}%
_{R}^{+}}\leq 2\left( \left\Vert T_{1}\right\Vert _{\mathcal{B}_{L}^{+}\circ 
\mathcal{B}_{R}^{+}}+\left\Vert T_{2}\right\Vert _{\mathcal{B}_{L}^{+}\circ 
\mathcal{B}_{R}^{+}}\right) .
\end{equation*}
\end{proof}

We now provide examples of positive ideals. For each $p\geq 1$, the space of
positive $p$-summing operators $\Pi _{p}^{+}$ forms a positive right ideal,
while the space of Cohen positive strongly $p$-summing operators $\mathcal{D}%
_{p}^{+}$ forms a positive left ideal. Consequently, the composition ideal $%
\mathcal{D}_{p}^{+}\circ \Pi _{p}^{+},$ which is equal to $\mathcal{N}%
_{p}^{+},$ the space of positive $p$-nuclear operators, forms a positive
ideal. As a result, $\left( \mathcal{N}_{p}^{+},N_{p}^{+}\left( .\right)
\right) $ is a positive Banach ideal, while $\left( \mathcal{N}%
_{p}^{+},\left\Vert .\right\Vert _{\mathcal{D}_{p}^{+}\circ \Pi
_{p}^{+}}\right) $ is a positive quasi-Banach ideal.

\subsection{\textsc{Positive left multi-ideal}}

We extend the previous concepts to the multilinear case by introducing a new
definition for positive ideals of multilinear operators. This new concept is
an extension of multi-ideals and utilizes the techniques introduced in \cite%
{BPR07,Pml}. To begin with, we introduce the notion of positive left
multi-ideal. Moreover, we propose a composition method that allows us to
construct a positive left ideal of multilinear operators from a given
positive left ideal.

\begin{definition}
A positive left multi-ideal (or positive left ideal of multilinear
operators), denoted by $\mathcal{M}_{L}^{+},$ is a subclass of all
continuous multilinear operators from Banach spaces into a Banach lattice
such that for all $m\in \mathbb{N}^{\ast }$, Banach spaces $X_{1},\ldots
,X_{m}$ and Banach lattice $F,$ the components 
\begin{equation*}
\mathcal{M}_{L}^{+}(X_{1},...,X_{m};F):=\mathcal{L}(X_{1},...,X_{m};F)\cap 
\mathcal{M}_{L}^{+}
\end{equation*}%
satisfy:\newline
$(i)$ $\mathcal{M}_{L}^{+}(X_{1},...,X_{m};F)$ is a linear subspace of $%
\mathcal{L}(X_{1},...,X_{m};F)$ which contains the $m$-linear mappings of
finite rank.\newline
$(ii)$ The positive ideal property: If $T\in \mathcal{M}_{L}^{+}\left(
X_{1},\ldots ,X_{m};F\right) ,u_{j}\in \mathcal{L}\left( Y_{j};X_{j}\right) $
for $j=1,\ldots ,m$ and $v\in \mathcal{L}^{+}(F;E)$, then $v\circ T\circ
\left( u_{1},\ldots ,u_{m}\right) $ is in $\mathcal{M}_{L}^{+}\left(
Y_{1},\ldots ,Y_{m};E\right) $.\newline
If $\Vert \cdot \Vert _{\mathcal{M}_{L}^{+}}:\mathcal{M}_{L}^{+}\rightarrow 
\mathbb{R}^{+}$ satisfies:\newline
a) $\left( \mathcal{M}_{L}^{+}(X_{1},...,X_{m};F),\Vert \cdot \Vert _{%
\mathcal{M}_{L}^{+}}\right) $ is a Banach (quasi-Banach) space for all
Banach spaces $X_{1},\ldots ,X_{m}$ and Banach lattice $F$.\newline
b) The $m$ -linear form $T^{m}:\mathbb{K}^{m}\rightarrow \mathbb{K}$ given
by $T^{m}\left( \lambda ^{1},\ldots ,\lambda ^{m}\right) =\lambda ^{1}\ldots
\lambda ^{m}$ satisfies $\left\Vert T^{m}\right\Vert _{\mathcal{M}%
_{L}^{+}}=1 $ for all $m$,\newline
c) $T\in \mathcal{M}_{L}^{+}\left( X_{1},\ldots ,X_{m};F\right) ,u_{j}\in 
\mathcal{L}\left( Y_{j};X_{j}\right) $ for $j=$ $1,\ldots ,m$ and $v\in 
\mathcal{L}^{+}(F;E)$ then 
\begin{equation*}
\left\Vert v\circ T\circ \left( u_{1},\ldots ,u_{m}\right) \right\Vert _{%
\mathcal{M}_{L}^{+}}\leq \Vert v\Vert \Vert T\Vert _{\mathcal{M}%
_{L}^{+}}\left\Vert u_{1}\right\Vert \ldots \left\Vert u_{m}\right\Vert
\end{equation*}%
we say that $\left( \mathcal{M}_{L}^{+},\Vert \cdot \Vert _{\mathcal{M}%
_{L}^{+}}\right) $ is a positive Banach (quasi-Banach) right multi-ideal.
When $m=1$, it specifically corresponds to the case of a positive right
ideal.
\end{definition}

\begin{remark}
\label{Remark2}As mentioned in the remark \ref{Remark1}$,$ we can substitute
the set $\mathcal{L}^{+}\left( F;E\right) $ with the space $\mathcal{L}%
^{r}\left( F;E\right) $. Indeed, consider $v\in \mathcal{L}^{r}(F;E)$ such
that $v=v_{1}-v_{2},$ where $v_{1},v_{2}\subset \mathcal{L}^{+}\left(
E;F\right) $. Now, we can write%
\begin{eqnarray*}
v\circ T\circ \left( u_{1},\ldots ,u_{m}\right) &=&\left( v_{1}-v_{2}\right)
\circ T\circ \left( u_{1},\ldots ,u_{m}\right) \\
&=&v_{1}\circ T\circ \left( u_{1},\ldots ,u_{m}\right) -v_{2}\circ T\circ
\left( u_{1},\ldots ,u_{m}\right) .
\end{eqnarray*}%
By the linearity of $\mathcal{M}_{L}^{+}\left( Y_{1},\ldots ,Y_{m};E\right) $%
, we conclude that $v\circ T\circ \left( u_{1},\ldots ,u_{m}\right) \in 
\mathcal{M}_{L}^{+}\left( Y_{1},\ldots ,Y_{m};E\right) $.\newline
\end{remark}

\begin{remark}
It is evident that all multi-ideals are also positive left multi-ideals.
\end{remark}

Here is an example of a positive left multi-ideal. In \cite{BB18}, the
authors introduced the class $\mathcal{D}_{p}^{m+}$ of Cohen positive
strongly $p$-summing multilinear operators.

\begin{proposition}
The class $(\mathcal{D}_{p}^{m+},d_{p}^{m+}(.))$ is a\ positive Banach left
multi-ideal.
\end{proposition}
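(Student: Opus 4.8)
The plan is to check, in turn, the five defining conditions of a Banach positive right multi-ideal — $(i)$, $(ii)$, $a)$, $b)$, $c)$ — for the pair $\big(\mathcal{D}_{p}^{m+},d_{p}^{m+}\big)$, working throughout directly from the defining inequality $(\ref{nclpos1})$.

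First I would dispose of $(i)$. Subadditivity of $d_{p}^{m+}$ and the fact that $\mathcal{D}_{p}^{m+}(X_{1},\ldots,X_{m};F)$ is a linear subspace follow at once by applying the triangle inequality to the left-hand side of $(\ref{nclpos1})$; positive homogeneity is clear; and $d_{p}^{m+}$ separates points since $d_{p}^{m+}(T)=0$ forces $\langle T(x^{1},\ldots,x^{m}),y^{\ast}\rangle=0$ for all $y^{\ast}\in F^{\ast +}$, hence $T=0$ because $F^{\ast +}$ spans $F^{\ast}$. For finite-type maps, I would reduce by linearity to a rank-one $T(x^{1},\ldots,x^{m})=\varphi_{1}(x^{1})\cdots\varphi_{m}(x^{m})\,y$ and estimate $\sum_{i}|\langle T(x_{i}^{1},\ldots,x_{i}^{m}),y_{i}^{\ast}\rangle|$ by pulling out $\prod_{j}\|\varphi_{j}\|$ and $\|y\|$ and then applying Hölder's inequality with exponents $p,p^{\ast}$ to the scalar sequences $\big(\prod_{j}\|x_{i}^{j}\|\big)_{i}$ and $\big(|\langle y/\|y\|,y_{i}^{\ast}\rangle|\big)_{i}$; this yields $d_{p}^{m+}(T)\le\prod_{j}\|\varphi_{j}\|\cdot\|y\|$, and summing over a finite-type representation gives the general case.

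Next I would treat the ideal property $(ii)$ and the norm estimate $c)$ together. By Remark \ref{Remark2} it is enough to take $v\in\mathcal{L}^{+}(F;E)$, for which $v^{\ast}$ is positive, so $v^{\ast}z^{\ast}\in F^{\ast +}$ whenever $z^{\ast}\in E^{\ast +}$. Setting $S=v\circ T\circ(u_{1},\ldots,u_{m})$ and using $\langle S(y_{i}^{1},\ldots,y_{i}^{m}),z_{i}^{\ast}\rangle=\langle T(u_{1}y_{i}^{1},\ldots,u_{m}y_{i}^{m}),v^{\ast}z_{i}^{\ast}\rangle$, I would apply $(\ref{nclpos1})$ to $T$ with the vectors $x_{i}^{j}=u_{j}y_{i}^{j}$ and the positive functionals $v^{\ast}z_{i}^{\ast}$, and then bound the two resulting factors: $\big(\sum_{i}\prod_{j}\|u_{j}y_{i}^{j}\|^{p}\big)^{1/p}\le\big(\prod_{j}\|u_{j}\|\big)\big(\sum_{i}\prod_{j}\|y_{i}^{j}\|^{p}\big)^{1/p}$, and, since $\langle w,v^{\ast}z_{i}^{\ast}\rangle=\langle vw,z_{i}^{\ast}\rangle$ with $vw\in\|v\|B_{E}$ as $w$ runs over $B_{F}$, $\|(v^{\ast}z_{i}^{\ast})_{i}\|_{p^{\ast},w}\le\|v\|\,\|(z_{i}^{\ast})_{i}\|_{p^{\ast},w}$. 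This gives $S\in\mathcal{D}_{p}^{m+}$ with $d_{p}^{m+}(S)\le\|v\|\,d_{p}^{m+}(T)\,\|u_{1}\|\cdots\|u_{m}\|$; a general $v\in\mathcal{L}^{r}(F;E)$ is then handled by writing $v=v_{1}-v_{2}$ and invoking subadditivity of $d_{p}^{m+}$.

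Finally, for $b)$ I would use the rank-one estimate with $\varphi_{j}=\mathrm{id}_{\mathbb{K}}$ and $y=1$ to get $d_{p}^{m+}(T^{m})\le 1$, and test $(\ref{nclpos1})$ with $n=1$ and all data equal to $1$ for the reverse inequality, so $d_{p}^{m+}(T^{m})=1$. The one step requiring genuine care is completeness in $a)$: taking $n=1$ in $(\ref{nclpos1})$ and splitting $y^{\ast}=(y^{\ast})^{+}-(y^{\ast})^{-}$ gives $\|T\|\le 2\,d_{p}^{m+}(T)$, so a $d_{p}^{m+}$-Cauchy sequence converges in operator norm to some $T\in\mathcal{L}(X_{1},\ldots,X_{m};F)$; then, since each instance of $(\ref{nclpos1})$ involves only finite sums, a Fatou-type passage to the limit shows $T\in\mathcal{D}_{p}^{m+}$ and $T_{k}\to T$ in $d_{p}^{m+}$. (Alternatively, completeness can be read off from the multilinear Pietsch domination theorem for $\mathcal{D}_{p}^{m+}$, the natural analogue of $(\ref{domPS})$.) Apart from this completeness argument, the only mildly delicate point is tracking the two norm factors under $v^{\ast}$ and under the $u_{j}$ in the proof of $(ii)$–$c)$; everything else is a direct unwinding of the definition together with Hölder's inequality.
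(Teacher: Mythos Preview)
Your argument is correct, and it differs from the paper's only in that the paper outsources the work: for $(i)$ it cites \cite[Theorem 2.10]{BB18}, for $(ii)$ it cites \cite[Proposition 2.3]{BB18}, and it declares the remaining conditions $a)$, $b)$, $c)$ to be obvious. You instead verify everything directly from the defining inequality $(\ref{nclpos1})$, which makes your proof self-contained. The rank-one/H\"older argument for finite-type maps, the adjoint computation $\langle v\circ T\circ(u_1,\ldots,u_m)(\cdot),z^{\ast}\rangle=\langle T(\cdot),v^{\ast}z^{\ast}\rangle$ combined with positivity of $v^{\ast}$, and the completeness argument via $\|T\|\le 2\,d_{p}^{m+}(T)$ are all sound; the only cosmetic point is that the factor $2$ in this last inequality is not needed if one tests $(\ref{nclpos1})$ with $|y^{\ast}|$ rather than splitting $y^{\ast}=(y^{\ast})^{+}-(y^{\ast})^{-}$, since $|\langle T(x^{1},\ldots,x^{m}),y^{\ast}\rangle|\le\langle|T(x^{1},\ldots,x^{m})|,|y^{\ast}|\rangle$ holds pointwise---but this does not affect the argument. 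In short: same conclusion, but your route is a direct unwinding of the definition while the paper defers to \cite{BB18}.
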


\begin{proof}
$(i)$ $\mathcal{D}_{p}^{m+}(X_{1},...,X_{m};F)$ is a linear subspace of $%
\mathcal{L}(X_{1},...,X_{m};F)$ that contains the $m$-linear mappings of
finite rank, see \cite[Theorem 2.10]{BB18}. \newline
$(ii)$ The positive ideal property: see \cite[Proposition 2.3]{BB18}.\newline
The rest is obvious. In conclusion, the class $(\mathcal{D}%
_{p}^{m+},d_{p}^{m+}(.))$ is indeed a positive Banach left multi-ideal.%
\newline
\end{proof}

\textbf{The composition method. }Let $\mathcal{B}_{L}^{+}$ be a positive
left ideal. Let $X_{j}$ be Banach spaces with $1\leq j\leq m$, and let $F$
be a Banach lattice. A multilinear operator $T\in \mathcal{L}%
(X_{1},...,X_{m};F)$ belongs to $\mathcal{B}_{L}^{+}\circ \mathcal{L}$ if
there is a Banach space $Y$, a multilinear operator $S\in \mathcal{L}%
(X_{1},...,X_{m};Y)$, and an operator $u\in \mathcal{B}_{L}^{+}(Y;F)$ such
that 
\begin{equation*}
\begin{array}{ccc}
X_{1}\times ...\times X_{m} & \overset{T}{\longrightarrow } & F \\ 
S\downarrow & \nearrow u &  \\ 
Y &  & 
\end{array}%
\end{equation*}%
i.e., $T=u\circ S$. In this case, we denote $T\in \mathcal{B}_{L}^{+}\circ 
\mathcal{L}(X_{1},...,X_{m};F).$

\begin{remark}
An argument similar to \cite[Proposition 3.3]{BPR07}, the class $\mathcal{B}%
_{L}^{+}\circ \mathcal{L}$ is indeed a positive left multi-ideal.
\end{remark}

We have the following result.

\begin{proposition}
Let $\mathcal{B}_{L}^{+}$ be a positive left ideal. Let $X_{1},...,X_{m}$ be
Banach spaces and $F$ be a Banach lattice. For $T\in \mathcal{L}%
(X_{1},...,X_{m};F),$ the following statements are equivalent:\newline
a) The operator $T$ belongs to $\mathcal{B}_{L}^{+}\circ \mathcal{L}%
(X_{1},...,X_{m};F)$.\newline
b) The linearization $T_{L}$ belongs to $\mathcal{B}_{L}^{+}(X_{1}\widehat{%
\otimes }_{\pi }...\widehat{\otimes }_{\pi }X_{m};F).$
\end{proposition}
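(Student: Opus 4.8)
The plan is to use the universal linearization property of the projective tensor product together with the identity $T=T_{L}\circ \sigma _{m}$ recalled in the preliminaries, and then to invoke the positive ideal property $(ii)$ of $\mathcal{B}_{R}^{+}$ with the identity operator on $F$ playing the role of the regular operator.

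First I would treat the implication $b)\Rightarrow a)$, which is immediate: if $T_{L}\in \mathcal{B}_{R}^{+}(X_{1}\widehat{\otimes }_{\pi }\cdots \widehat{\otimes }_{\pi }X_{m};F)$, then putting $Y:=X_{1}\widehat{\otimes }_{\pi }\cdots \widehat{\otimes }_{\pi }X_{m}$, $S:=\sigma _{m}\in \mathcal{L}(X_{1},\ldots ,X_{m};Y)$ and $u:=T_{L}\in \mathcal{B}_{R}^{+}(Y;F)$, the identity $T=T_{L}\circ \sigma _{m}=u\circ S$ exhibits $T$ as an element of $\mathcal{B}_{R}^{+}\circ \mathcal{L}(X_{1},\ldots ,X_{m};F)$.

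For $a)\Rightarrow b)$, suppose $T=u\circ S$ with $S\in \mathcal{L}(X_{1},\ldots ,X_{m};Y)$ and $u\in \mathcal{B}_{R}^{+}(Y;F)$, and consider the linearization $S_{L}:X_{1}\widehat{\otimes }_{\pi }\cdots \widehat{\otimes }_{\pi }X_{m}\rightarrow Y$, which is a bounded linear operator. On elementary tensors one has
\[
(u\circ S_{L})(x^{1}\otimes \cdots \otimes x^{m})=u\bigl(S(x^{1},\ldots ,x^{m})\bigr)=T(x^{1},\ldots ,x^{m})=T_{L}(x^{1}\otimes \cdots \otimes x^{m}),
\]
and since the linear span of the elementary tensors is dense in $X_{1}\widehat{\otimes }_{\pi }\cdots \widehat{\otimes }_{\pi }X_{m}$ while both $u\circ S_{L}$ and $T_{L}$ are continuous and linear, we get $T_{L}=u\circ S_{L}$. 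Writing $T_{L}=\mathrm{id}_{F}\circ u\circ S_{L}$ and noting that $\mathrm{id}_{F}\in \mathcal{L}^{+}(F;F)\subset \mathcal{L}^{r}(F;F)$, $u\in \mathcal{B}_{R}^{+}(Y;F)$ and $S_{L}\in \mathcal{L}(X_{1}\widehat{\otimes }_{\pi }\cdots \widehat{\otimes }_{\pi }X_{m};Y)$, the positive ideal property $(ii)$ yields $T_{L}\in \mathcal{B}_{R}^{+}(X_{1}\widehat{\otimes }_{\pi }\cdots \widehat{\otimes }_{\pi }X_{m};F)$.

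The only step needing a little care is the identification $T_{L}=u\circ S_{L}$, i.e.\ that the linearization of $u\circ S$ equals $u$ composed with the linearization of $S$; this is exactly where the universal property of $\widehat{\otimes }_{\pi }$ (agreement on the dense set of elementary tensors plus continuity) is used, and it is the main, though routine, obstacle. Everything else is formal. In the normed setting, the same two factorizations together with $\Vert \sigma _{m}\Vert =1$ and $\Vert S_{L}\Vert =\Vert S\Vert $ show moreover that $\Vert T_{L}\Vert _{\mathcal{B}_{R}^{+}}=\inf \{\Vert u\Vert _{\mathcal{B}_{R}^{+}}:T=u\circ S\}$.
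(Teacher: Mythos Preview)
Your argument is correct and is exactly the standard linearization argument that the paper points to by citing \cite[Proposition~3.2]{BPR07}; there is nothing to add for the proposition as stated. One small slip in your closing normed remark (which goes beyond the proposition itself): the infimum should be $\inf\{\Vert u\Vert_{\mathcal{B}_{R}^{+}}\Vert S\Vert : T=u\circ S\}$, i.e.\ the factor $\Vert S\Vert$ is missing from your formula.
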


\begin{proof}
A demonstration analogous to Proposition 3.2. \cite{BPR07}.
\end{proof}

Similarly to \cite[Proposition 3.7 (a)]{BPR07}, if $\mathcal{B}_{L}^{+}$ is
a positive Banach left ideal, the composition $\mathcal{B}_{L}^{+}\circ 
\mathcal{L}$ forms a positive Banach left multi-ideal and we have%
\begin{equation*}
\left\Vert T\right\Vert =\inf \left\{ \left\Vert u\right\Vert _{\mathcal{B}%
_{L}^{+}}\left\Vert S\right\Vert :T=u\circ S\right\} .
\end{equation*}%
Proposition 3.1 in \cite{BB18} states that $\mathcal{D}_{p}^{m+}=\mathcal{D}%
_{p}^{+}\circ \mathcal{L}$. Consequently, $\mathcal{D}_{p}^{m+}$ represents
the positive Banach left multi-ideal generated by the composition method
from the positive Banach left ideal $\mathcal{D}_{p}^{+}.$

\subsection{\textsc{Positive right multi-ideal}}

Let us introduce the concept of \textit{positive right multi-ideals},
expanding upon the concept of multi-ideals. We also introduce the
factorization method that allows us to construct a positive right
multi-ideal from a given positive right ideal.

\begin{definition}
A positive \textit{right} multi-ideal (or positive \textit{right} ideal of
multilinear operators), denoted by $\mathcal{M}_{R}^{+},$ is a subclass of
all continuous multilinear operators of Banach lattices into a Banach space.
It fulfils the property: for all $m\in \mathbb{N}^{\ast }$, Banach lattices $%
E_{1},\ldots ,E_{m},$ and Banach space $X,$ the components%
\begin{equation*}
\mathcal{M}_{R}^{+}(E_{1},...,E_{m};X):=\mathcal{L}(E_{1},...,E_{m};X)\cap 
\mathcal{M}_{R}^{+}
\end{equation*}%
satisfy:\newline
$(i)$ $\mathcal{M}_{R}^{+}(E_{1},...,E_{m};X)$ is a linear subspace of $%
\mathcal{L}(E_{1},...,E_{m};X)$ which contains the $m$-linear mappings of
finite rank.\newline
$(ii)$ The positive ideal property: If $T\in \mathcal{M}_{R}^{+}\left(
E_{1},\ldots ,E_{m};X\right) ,u_{j}\in \mathcal{L}^{+}\left(
G_{j};E_{j}\right) $ for $j=$ $1,\ldots ,m$ and $v\in \mathcal{L}(X;Y)$,
then $v\circ T\circ \left( u_{1},\ldots ,u_{m}\right) $ is in $\mathcal{M}%
_{R}^{+}\left( G_{1},\ldots ,G_{m};Y\right) $.\newline
If $\Vert \cdot \Vert _{\mathcal{M}_{L}^{+}}:\mathcal{M}_{L}^{+}\rightarrow 
\mathbb{R}^{+}$ satisfies:\newline
a) $\left( \mathcal{M}_{R}^{+}(E_{1},...,E_{m};X),\Vert \cdot \Vert _{%
\mathcal{M}_{R}^{+}}\right) $ is a Banach (quasi-Banach) space for all
Banach lattices $E_{1},\ldots ,E_{m}$ and Banach space $X$.\newline
b) The $m$ -linear form $T^{m}:\mathbb{K}^{m}\rightarrow \mathbb{K}$ given
by $T^{m}\left( \lambda ^{1},\ldots ,\lambda ^{m}\right) =\lambda ^{1}\ldots
\lambda ^{m}$ satisfies $\left\Vert T^{m}\right\Vert _{\mathcal{M}%
_{R}^{+}}=1 $ for all $m$,\newline
c) $T\in \mathcal{M}_{R}^{+}\left( E_{1},\ldots ,E_{m};X\right) ,u_{j}\in 
\mathcal{L}^{+}\left( G_{j},E_{j}\right) $ for $j=$ $1,\ldots ,m$ and $v\in 
\mathcal{L}(X,Y)$ then 
\begin{equation*}
\left\Vert v\circ T\circ \left( u_{1},\ldots ,u_{m}\right) \right\Vert _{%
\mathcal{M}_{R}^{+}}\leq \Vert v\Vert \Vert T\Vert _{\mathcal{M}%
_{R}^{+}}\left\Vert u_{1}\right\Vert \ldots \left\Vert u_{m}\right\Vert .
\end{equation*}%
The class $\left( \mathcal{M}_{R}^{+},\Vert \cdot \Vert _{\mathcal{M}%
_{R}^{+}}\right) $ is referred to as a Banach (quasi-Banach) positive 
\textit{right} multi-ideal. When $m=1$, it specifically corresponds to the
case of a positive \textit{right} ideal.
\end{definition}

\begin{remark}
\label{Remark3}Condition $\left( ii\right) $ is equivalent to the following
assertion: For any $T\in \mathcal{M}_{R}^{+}\left( E_{1},\ldots
,E_{m};X\right) $, $u_{j}\in \mathcal{L}^{r}\left( G_{j};E_{j}\right) $ for $%
j=$ $1,\ldots ,m$ and $v\in \mathcal{L}(X;Y)$, the composition $v\circ
T\circ \left( u_{1},\ldots ,u_{m}\right) $ belongs to $\mathcal{M}%
_{R}^{+}\left( G_{1},\ldots ,G_{m};Y\right) $. In fact, we see this
equivalence without loss of generality for the case $m=2.$ Consider $%
u_{j}\in \mathcal{L}^{r}\left( G_{j};E_{j}\right) $ such that $%
u_{j}=u_{j}^{1}-u_{j}^{2}$ where $u_{j}^{1},u_{j}^{2}\subset \mathcal{L}%
^{+}\left( G_{j};E_{j}\right) $ $\left( j=1,2\right) $. Now, we can write%
\begin{eqnarray*}
&&v\circ T\circ \left( u_{1},u_{2}\right) \\
&=&v\circ T\circ \left( u_{1}^{1}-u_{1}^{2},u_{2}^{1}-u_{2}^{2}\right) \\
&=&v\circ T\circ \left( u_{1}^{1},u_{2}^{1}\right) -v\circ T\circ \left(
u_{1}^{1},u_{2}^{2}\right) -v\circ T\circ \left( u_{1}^{2},u_{2}^{1}\right)
+v\circ T\circ \left( u_{1}^{2},u_{2}^{2}\right) .
\end{eqnarray*}%
This is because the operators $v\circ T\circ \left(
u_{1}^{j},u_{2}^{k}\right) $ belong to $\mathcal{M}_{R}^{+}\left(
G_{1},G_{2};Y\right) $ $\left( j,k=1,2\right) $. Due to the linearity of $%
\mathcal{M}_{L}^{+}\left( G_{1},G_{2};Y\right) $, we conclude that $v\circ
T\circ \left( u_{1},u_{2}\right) \in \mathcal{M}_{R}^{+}\left(
G_{1},G_{2};Y\right) $. The converse is immediate.\newline
\end{remark}

\begin{remark}
It is obvious that all multi-ideals are actually positive \textit{right}
multi-ideals.
\end{remark}

\textbf{The factorization method. }For $m\in \mathbb{N}^{\ast },$ let $(%
\mathcal{B}_{j,R}^{+},\Vert \cdot \Vert _{\mathcal{B}_{j,R}^{+}})$ be Banach
positive right ideals for $i=1,...,m$. We define the class $\mathcal{L}(%
\mathcal{B}_{1,R}^{+},...,\mathcal{B}_{m,R}^{+})$ as follows: Let $%
E_{1},...,E_{m}$ be Banach lattices and $Y$ be a Banach space. An operator $%
T $ belongs to $\mathcal{L}(\mathcal{B}_{1,R}^{+},...,\mathcal{B}%
_{m,R}^{+})(E_{1},...,E_{m};Y)$ if there exist Banach spaces $%
X_{1},...,X_{m},$ operators $u_{j}\in \mathcal{B}_{j,R}^{+}(E_{j};X_{j})$ $%
\left( 1\leq j\leq m\right) ,$ and a multilinear operator $S\in \mathcal{L}%
(X_{1},...,X_{m};Y)$ such that%
\begin{equation*}
\begin{array}{ccccc}
E_{1} & \times ...\times & E_{m} & \overset{T}{\longrightarrow } & Y \\ 
u_{1}\downarrow &  & u_{m}\downarrow & \nearrow S &  \\ 
X_{1} & \times ...\times & X_{m} &  & 
\end{array}%
\end{equation*}%
i.e., $T=S\circ (u_{1},...,u_{m})$. In this case, we define the quasi-norm
of $T$ with respect to $\mathcal{L}(\mathcal{B}_{1,R}^{+},...,\mathcal{B}%
_{m,R}^{+})$ as%
\begin{equation*}
\Vert T\Vert _{\mathcal{L}(\mathcal{B}_{1,R}^{+},...,\mathcal{B}%
_{m,R}^{+})}=\inf \{\Vert S\Vert \prod_{j=1}^{m}\Vert u_{j}\Vert _{\mathcal{B%
}_{j,R}^{+}}\},
\end{equation*}%
where the infimum is taken over all possible factorizations of $T$ as
described above.

\begin{remark}
By a similar argument as in \cite[Theorem 1.4.1]{PelDoctorat}, we can show
that the class $\left( \mathcal{L}(\mathcal{B}_{1,R}^{+},...,\mathcal{B}%
_{m,R}^{+}),\Vert .\Vert _{\mathcal{L}(\mathcal{B}_{1,R}^{+},...,\mathcal{B}%
_{m,R}^{+})}\right) $ is a positive quasi-Banach \textit{right} multi-ideal.
\end{remark}

\subsection{\textsc{Positive multi-ideals}}

In this subsection, we introduce the definition of positive multi-ideals.
The construction of these positive multi-ideals is based on techniques
inspired by \cite{Pml}, utilizing multilinear operators defined exclusively
between Banach lattices.

\begin{definition}
A positive multi-ideal (or positive ideal of multilinear operators) is a
subclass $\mathcal{M}^{+}$ of all continuous multilinear operators between
Banach lattices. It is characterized by the property that for all $m\in 
\mathbb{N}^{\ast }$ and Banach lattices $E_{1},\ldots ,E_{m}$ and $F$, the
components 
\begin{equation*}
\mathcal{M}^{+}(E_{1},...,E_{m};F):=\mathcal{L}(E_{1},...,E_{m};F)\cap 
\mathcal{M}^{+}
\end{equation*}%
satisfy:\newline
$(i)$ $\mathcal{M}^{+}(E_{1},...,E_{m};F)$ is a linear subspace of $\mathcal{%
L}(E_{1},...,E_{m};F)$ which contains the $m$-linear mappings of finite rank.%
\newline
$(ii)$ The positive ideal property: If $T\in \mathcal{M}^{+}\left(
E_{1},\ldots ,E_{m};F\right) ,u_{j}\in \mathcal{L}^{+}\left(
G_{j};E_{j}\right) $ for $j=$ $1,\ldots ,m$ and $v\in \mathcal{L}^{+}(F;G)$,
then $v\circ T\circ \left( u_{1},\ldots ,u_{m}\right) $ is in $\mathcal{M}%
^{+}\left( G_{1},\ldots ,G_{m};G\right) $.\newline
If $\Vert \cdot \Vert _{\mathcal{M}^{+}}:\mathcal{M}^{+}\rightarrow \mathbb{R%
}^{+}$ satisfies:\newline
a) $\left( \mathcal{M}^{+}(E_{1},...,E_{m};F),\Vert \cdot \Vert _{\mathcal{M}%
^{+}}\right) $ is a Banach (quasi-Banach) space for all Banach lattices $%
E_{1},\ldots ,E_{m},$ $F$.\newline
b) The $m$ -linear form $T^{m}:\mathbb{K}^{m}\rightarrow \mathbb{K}$ given
by $T^{m}\left( \lambda ^{1},\ldots ,\lambda ^{m}\right) =\lambda ^{1}\ldots
\lambda ^{m}$ satisfies $\left\Vert T^{m}\right\Vert _{\mathcal{M}^{+}}=1$
for all $m$,\newline
c) $T\in \mathcal{M}^{+}\left( E_{1},\ldots ,E_{m};F\right) ,u_{j}\in 
\mathcal{L}^{+}\left( G_{j};E_{j}\right) $ for $j=$ $1,\ldots ,m$ and $v\in 
\mathcal{L}^{+}(F;G)$ then 
\begin{equation*}
\left\Vert v\circ T\circ \left( u_{1},\ldots ,u_{m}\right) \right\Vert _{%
\mathcal{M}^{+}}\leq \Vert v\Vert \Vert T\Vert _{\mathcal{M}^{+}}\left\Vert
u_{1}\right\Vert \ldots \left\Vert u_{m}\right\Vert .
\end{equation*}%
The class $\left( \mathcal{M}^{+},\Vert \cdot \Vert _{\mathcal{M}}\right) $
is referred to as a positive Banach (quasi-Banach) multi-ideal. In
particular, when $m=1$, we specifically refer to it as a positive Banach
(quasi-Banach) ideal.
\end{definition}

\begin{remark}
As mentioned in the remarks \ref{Remark2} and \ref{Remark3}$,$ we can
substitute the set $\mathcal{L}^{+}\left( G_{j};E_{j}\right) $ with the
space $\mathcal{L}^{r}\left( G_{j};E_{j}\right) $ $\left( 1\leq j\leq
m\right) $ and $\mathcal{L}^{+}(F;G)$ with the space $\mathcal{L}^{r}(F;G).$
Then, the condition $\left( ii\right) $ remains valid.
\end{remark}

\begin{remark}
1) It is evident that all multi-ideals are indeed positive multi-ideals.

2) Every positive right or left multi-ideal is positive multi-ideal.
\end{remark}

Let $\mathcal{B}_{L}^{+}$ be a positive left ideal, and $\mathcal{M}_{R}^{+}$
a positive right multi-ideal. The composition $\mathcal{B}_{L}^{+}\circ 
\mathcal{M}_{R}^{+}$ is defined as the class of multilinear operators $T$
that can be factorized by a Banach space as $T=v\circ S$. In other words,
for any Banach lattice $E_{1},...,E_{m}$ and $F$, and for $T\in \mathcal{B}%
_{L}^{+}\circ \mathcal{M}_{R}^{+}(E_{1},...,E_{m};F)$, there exist a Banach
space $X$, an element $v$ that belongs to $\mathcal{B}_{L}^{+}\left(
X,E\right) $, and a multilinear operator $S$ that belongs to $\mathcal{M}%
_{R}^{+}(E_{1},...,E_{m};X)$ so that%
\begin{equation*}
\begin{array}{ccc}
E_{1}\times ...\times E_{m} & \overset{T}{\longrightarrow } & F \\ 
S\downarrow & \nearrow v &  \\ 
X &  & 
\end{array}%
\end{equation*}%
In other words, $T$ can be expressed as $T=v\circ S$. If $\mathcal{B}%
_{L}^{+} $ is positive Banach left ideal and $\mathcal{M}_{R}^{+}$ is
positive Banach right multi-ideal and $T\in \mathcal{B}_{L}^{+}\circ 
\mathcal{M}_{R}^{+}(E_{1},...,E_{m};F)$ we define%
\begin{equation*}
\left\Vert T\right\Vert _{\mathcal{B}_{L}^{+}\circ \mathcal{M}_{R}^{+}}=\inf
\left\{ \left\Vert v\right\Vert _{\mathcal{B}_{L}^{+}}\left\Vert
S\right\Vert _{\mathcal{M}_{R}^{+}}:T=v\circ S\right\} .
\end{equation*}

A similar argument to that used in the Proposition \ref{Propo1} and Theorem %
\ref{Theorem1} can be applied to establish the following result.

\begin{theorem}
If $\mathcal{B}_{L}^{+}$ is a positive Banach left ideal and $\mathcal{M}%
_{R}^{+}$ is a positive Banach right multi-ideal, then%
\begin{equation*}
\left( \mathcal{B}_{L}^{+}\circ \mathcal{M}_{R}^{+},\left\Vert .\right\Vert
_{\mathcal{B}_{L}^{+}\circ \mathcal{M}_{R}^{+}}\right)
\end{equation*}%
is a positive quasi-Banach multi-ideal.
\end{theorem}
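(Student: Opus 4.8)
The plan is to verify the five defining conditions $(i)$, $(ii)$, $(a)$, $(b)$, $(c)$ of a normed (Banach) positive multi-ideal directly for $\mathcal{B}_{R}^{+}\circ\mathcal{M}_{L}^{+}$ equipped with $\|\cdot\|_{\mathcal{B}_{R}^{+}\circ\mathcal{M}_{L}^{+}}$. The guiding observation is that in a factorization $T=v\circ S$ the intermediate space $X$ is merely a Banach space, so the argument is the multilinear analogue of the linear composition-ideal statement established just above; positivity enters only through the Banach lattices $E_{1},\ldots ,E_{m},F$ and the regular operators on the outside, and is absorbed by the already-proved ideal properties of $\mathcal{B}_{R}^{+}$ and of $\mathcal{M}_{L}^{+}$ taken separately. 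By Remarks \ref{Remark2} and \ref{Remark3} I will freely replace regular operators by positive ones.

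First I would settle $(i)$. A finite-type $m$-linear map $T_{f}(x^{1},\ldots ,x^{m})=\sum_{i=1}^{k}x_{i,1}^{\ast }(x^{1})\cdots x_{i,m}^{\ast }(x^{m})y_{i}$ factors as $v\circ S$, with $S\colon E_{1}\times\cdots\times E_{m}\to\mathbb{K}^{k}$ the finite-type $m$-linear map whose $i$-th coordinate is $x_{i,1}^{\ast }(x^{1})\cdots x_{i,m}^{\ast }(x^{m})$ — which lies in $\mathcal{M}_{L}^{+}$ by its condition $(i)$ — and $v\colon\mathbb{K}^{k}\to F$, $(a_{i})_{i}\mapsto\sum_{i}a_{i}y_{i}$, which lies in $\mathcal{B}_{R}^{+}$ by its condition $(i)$; hence $T_{f}\in\mathcal{B}_{R}^{+}\circ\mathcal{M}_{L}^{+}$. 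For the vector-space structure, scalar multiples are immediate from $\lambda(v\circ S)=v\circ(\lambda S)$, and for a sum $T_{1}+T_{2}$ with $T_{i}=v_{i}\circ S_{i}$, $v_{i}\colon X_{i}\to F$, $S_{i}\colon E_{1}\times\cdots\times E_{m}\to X_{i}$, I would pass to $X:=X_{1}\oplus_{\infty}X_{2}$ and take $S:=\iota_{1}\circ S_{1}+\iota_{2}\circ S_{2}$ and $v:=v_{1}\circ\pi_{1}+v_{2}\circ\pi_{2}$, where $\iota_{j},\pi_{j}$ are the canonical inclusions and projections; linearity of the components of $\mathcal{M}_{L}^{+}$ and of $\mathcal{B}_{R}^{+}$ puts $S$ and $v$ into the respective classes, and $v\circ S=T_{1}+T_{2}$.

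The core step, which I would carry out in detail, is $(ii)$ together with the estimate $(c)$, since this is where the multilinear composition rule is used. For $T=v\circ S$ with $v\in\mathcal{B}_{R}^{+}(X;F)$, $S\in\mathcal{M}_{L}^{+}(E_{1},\ldots,E_{m};X)$ and $u_{j}\in\mathcal{L}^{r}(G_{j};E_{j})$, $w\in\mathcal{L}^{r}(F;G)$, the associativity identity
\[ w\circ T\circ(u_{1},\ldots,u_{m})=(w\circ v)\circ\bigl(S\circ(u_{1},\ldots,u_{m})\bigr) \]
reduces everything to the two given ideals: applying the ideal property of $\mathcal{M}_{L}^{+}$ with $\mathrm{id}_{X}$ as the outer operator puts $S\circ(u_{1},\ldots,u_{m})$ in $\mathcal{M}_{L}^{+}(G_{1},\ldots,G_{m};X)$ with $\mathcal{M}_{L}^{+}$-norm at most $\|S\|_{\mathcal{M}_{L}^{+}}\|u_{1}\|\cdots\|u_{m}\|$, and applying the ideal property of $\mathcal{B}_{R}^{+}$ with $\mathrm{id}_{X}$ as the inner operator puts $w\circ v$ in $\mathcal{B}_{R}^{+}(X;G)$ with $\mathcal{B}_{R}^{+}$-norm at most $\|w\|\,\|v\|_{\mathcal{B}_{R}^{+}}$. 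Taking the infimum over factorizations of $T$ then yields
\[ \bigl\|w\circ T\circ(u_{1},\ldots,u_{m})\bigr\|_{\mathcal{B}_{R}^{+}\circ\mathcal{M}_{L}^{+}}\le\|w\|\,\|T\|_{\mathcal{B}_{R}^{+}\circ\mathcal{M}_{L}^{+}}\,\|u_{1}\|\cdots\|u_{m}\|, \]
which is both $(ii)$ and $(c)$.

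It remains to treat $(a)$ and $(b)$. From any factorization $T=v\circ S$ one gets $\|T(x^{1},\ldots,x^{m})\|\le\|v\|\,\|S\|\,\|x^{1}\|\cdots\|x^{m}\|\le\|v\|_{\mathcal{B}_{R}^{+}}\|S\|_{\mathcal{M}_{L}^{+}}\|x^{1}\|\cdots\|x^{m}\|$, using that the norms of $\mathcal{B}_{R}^{+}$ and $\mathcal{M}_{L}^{+}$ dominate the respective operator norms (a consequence of their conditions $(b)$–$(c)$ and the fact that functionals on a Banach lattice are regular); hence $\|T\|_{\mathcal{L}}\le\|T\|_{\mathcal{B}_{R}^{+}\circ\mathcal{M}_{L}^{+}}$, which gives the definiteness of the norm, positive homogeneity being immediate. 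The triangle inequality and the completeness of each component I would obtain exactly as in the linear composition-ideal theory (cf.\ the $m=1$ result above and \cite{BPR07}), by forming $\ell_{\infty}$- and $\ell_{1}$-direct sums of the intermediate Banach spaces and, for completeness, first passing to a geometrically convergent subsequence of a Cauchy sequence. For $(b)$, the factorization $T^{m}=\mathrm{id}_{\mathbb{K}}\circ T^{m}$ with $\|T^{m}\|_{\mathcal{M}_{L}^{+}}=1$ and $\|\mathrm{id}_{\mathbb{K}}\|_{\mathcal{B}_{R}^{+}}=1$ gives $\|T^{m}\|_{\mathcal{B}_{R}^{+}\circ\mathcal{M}_{L}^{+}}\le1$, while the domination above gives the reverse inequality $\|T^{m}\|_{\mathcal{B}_{R}^{+}\circ\mathcal{M}_{L}^{+}}\ge\|T^{m}\|_{\mathcal{L}}=1$. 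I expect the only delicate point to be the triangle inequality for the infimum norm (and, relatedly, completeness); every other property reduces, through the single identity $T=v\circ S$, to inserting an identity operator and invoking the axioms of $\mathcal{B}_{R}^{+}$ and $\mathcal{M}_{L}^{+}$ on the two sides separately, so the write-up should be short.
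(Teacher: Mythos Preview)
Your proposal is correct and follows exactly the routine verification the paper has in mind; the paper itself omits the argument entirely, writing only that ``the proof of the following theorem can be easily proved.'' Your direct check of conditions $(i)$, $(ii)$, $(a)$, $(b)$, $(c)$ via the associativity identity $w\circ T\circ(u_{1},\ldots,u_{m})=(w\circ v)\circ\bigl(S\circ(u_{1},\ldots,u_{m})\bigr)$ and the standard direct-sum constructions is precisely the expected ``easy'' proof.
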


Let $\mathcal{B}_{1,R}^{+},...,\mathcal{B}_{m,R}^{+}$ be positive right
ideals, and let $\mathcal{M}_{L}^{+}$ be a positive left multi-ideal. The
class $\mathcal{M}_{L}^{+}\left( \mathcal{B}_{1,R}^{+},...,\mathcal{B}%
_{m,R}^{+}\right) $ is defined as the set of multilinear operators $T$ that
can be expressed as $T=S\left( v_{1},...,v_{m}\right) $. Specifically, for
any Banach lattices $E_{1},...,E_{m}$ and $F$, we say that $T$ belongs to $%
\mathcal{M}_{L}^{+}\left( \mathcal{B}_{1,R}^{+},...,\mathcal{B}%
_{m,R}^{+}\right) (E_{1},...,E_{m};F)$ if there exist Banach spaces $%
X_{1},...,X_{m}$, elements $u_{j}$ belonging to $\mathcal{B}_{j,R}^{+}\left(
E_{j},X_{j}\right) \left( 1\leq j\leq m\right) $, and a multilinear operator 
$S$ belonging to $\mathcal{M}_{L}^{+}(X_{1},...,X_{m};F)$ such that%
\begin{equation*}
\begin{array}{ccccc}
E_{1} & \times ...\times & E_{m} & \overset{T}{\longrightarrow } & F \\ 
u_{1}\downarrow &  & u_{m}\downarrow & \nearrow S &  \\ 
X_{1} & \times ...\times & X_{m} &  & 
\end{array}%
\end{equation*}%
In other words, $T$ can be represented as $T=S(u_{1},...,u_{m})$. If $%
\mathcal{B}_{j,R}^{+}$ $\left( 1\leq j\leq m\right) $\ are positive Banach
right ideals and $\mathcal{M}_{L}^{+}$ is positive Banach left multi-ideal
and if $T\in \mathcal{M}_{L}^{+}\left( \mathcal{B}_{1,R}^{+},...,\mathcal{B}%
_{m,R}^{+}\right) (E_{1},...,E_{m};F),$ we define%
\begin{equation*}
\left\Vert T\right\Vert _{\mathcal{M}_{L}^{+}\left( \mathcal{B}%
_{1,R}^{+},...,\mathcal{B}_{m,R}^{+}\right) }=\inf \left\{
\dprod\limits_{i=1}^{m}\left\Vert u_{i}\right\Vert _{\mathcal{B}%
_{i,R}^{+}}\left\Vert S\right\Vert _{\mathcal{M}%
_{L}^{+}}:T=S(u_{1},...,u_{m})\right\} .
\end{equation*}

According to \cite[Theorem 1]{Pml}, we present the following theorem.

\begin{theorem}
If $\mathcal{B}_{j,R}^{+}$ $\left( 1\leq j\leq m\right) $ are positive
Banach right ideals and $\mathcal{M}_{L}^{+}$ is a positive Banach left
multi-ideal, then%
\begin{equation*}
\left( \mathcal{M}_{L}^{+}\left( \mathcal{B}_{1,R}^{+},...,\mathcal{B}%
_{m,R}^{+}\right) ,\left\Vert .\right\Vert _{\mathcal{M}_{L}^{+}\left( 
\mathcal{B}_{1,R}^{+},...,\mathcal{B}_{m,R}^{+}\right) }\right)
\end{equation*}%
is a positive quasi-Banach multi-ideal.
\end{theorem}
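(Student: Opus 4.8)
The plan is to verify, one condition at a time, the five defining requirements (i), (ii), (a), (b), (c) — and, in the Banach case, completeness — for the pair $\left(\mathcal{M}_{R}^{+}(\mathcal{B}_{1,L}^{+},\ldots,\mathcal{B}_{m,L}^{+}),\Vert\cdot\Vert_{\mathcal{M}_{R}^{+}(\mathcal{B}_{1,L}^{+},\ldots,\mathcal{B}_{m,L}^{+})}\right)$, which I abbreviate to $\mathcal{M}^{+}$, $\Vert\cdot\Vert_{\mathcal{M}^{+}}$. The heart of the matter is the ideal property (ii) together with the estimate (c), and it is purely formal: if $T\in\mathcal{M}^{+}(E_{1},\ldots,E_{m};F)$ factors as $T=S\circ(u_{1},\ldots,u_{m})$ with $u_{j}\in\mathcal{B}_{j,L}^{+}(E_{j};X_{j})$, $S\in\mathcal{M}_{R}^{+}(X_{1},\ldots,X_{m};F)$, and $w_{j}\in\mathcal{L}^{r}(G_{j};E_{j})$, $v\in\mathcal{L}^{r}(F;G)$, then
\[
v\circ T\circ(w_{1},\ldots,w_{m})=(v\circ S)\circ(u_{1}\circ w_{1},\ldots,u_{m}\circ w_{m}).
\]
Since each $\mathcal{B}_{j,L}^{+}$ is a positive left ideal, $u_{j}\circ w_{j}=\mathrm{id}_{X_{j}}\circ u_{j}\circ w_{j}$ lies in $\mathcal{B}_{j,L}^{+}(G_{j};X_{j})$ with $\Vert u_{j}\circ w_{j}\Vert_{\mathcal{B}_{j,L}^{+}}\le\Vert u_{j}\Vert_{\mathcal{B}_{j,L}^{+}}\Vert w_{j}\Vert$; since $\mathcal{M}_{R}^{+}$ is a positive right multi-ideal, $v\circ S=v\circ S\circ(\mathrm{id}_{X_{1}},\ldots,\mathrm{id}_{X_{m}})$ lies in $\mathcal{M}_{R}^{+}(X_{1},\ldots,X_{m};G)$ with $\Vert v\circ S\Vert_{\mathcal{M}_{R}^{+}}\le\Vert v\Vert\,\Vert S\Vert_{\mathcal{M}_{R}^{+}}$. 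Hence $v\circ T\circ(w_{1},\ldots,w_{m})\in\mathcal{M}^{+}(G_{1},\ldots,G_{m};G)$, and passing to the infimum over the factorizations of $T$ gives $\Vert v\circ T\circ(w_{1},\ldots,w_{m})\Vert_{\mathcal{M}^{+}}\le\Vert v\Vert\,\Vert T\Vert_{\mathcal{M}^{+}}\Vert w_{1}\Vert\cdots\Vert w_{m}\Vert$. By the remarks of this section one may equivalently take the $w_{j}$ and $v$ positive.

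Next I would treat (i) and (b). Closure under scalars is trivial ($\lambda T=(\lambda S)\circ(u_{1},\ldots,u_{m})$), and closure under sums uses the standard direct-sum trick: for $T=S\circ(u_{j})$, $T'=S'\circ(u'_{j})$ one writes $T+T'=\widetilde{S}\circ(\widetilde{u}_{1},\ldots,\widetilde{u}_{m})$ with $\widetilde{u}_{j}\colon E_{j}\to X_{j}\oplus X'_{j}$, $\widetilde{u}_{j}(x)=(u_{j}(x),u'_{j}(x))$, and $\widetilde{S}$ sending $\left((a_{j},b_{j})\right)_{j=1}^{m}$ to $S(a_{1},\ldots,a_{m})+S'(b_{1},\ldots,b_{m})$; expressing $\widetilde{u}_{j}$, $\widetilde{S}$ as sums of the given operators composed with the canonical inclusions, resp. coordinate projections, the left-ideal axioms of $\mathcal{B}_{j,L}^{+}$, the right-multi-ideal axioms of $\mathcal{M}_{R}^{+}$ and linearity of the components give $\widetilde{u}_{j}\in\mathcal{B}_{j,L}^{+}$, $\widetilde{S}\in\mathcal{M}_{R}^{+}$, whence $T+T'\in\mathcal{M}^{+}$. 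A finite-type map $T_{f}(x^{1},\ldots,x^{m})=\sum_{i=1}^{k}x_{i,1}^{\ast}(x^{1})\cdots x_{i,m}^{\ast}(x^{m})y_{i}$ factors as $T_{f}=S\circ(u_{1},\ldots,u_{m})$ with $u_{j}\colon E_{j}\to\mathbb{K}^{k}$, $u_{j}(x)=(x_{i,j}^{\ast}(x))_{i=1}^{k}$ (finite-type linear, hence in $\mathcal{B}_{j,L}^{+}$) and $S\colon(\mathbb{K}^{k})^{m}\to F$, $S(a^{1},\ldots,a^{m})=\sum_{i=1}^{k}a_{i}^{1}\cdots a_{i}^{m}y_{i}$ (finite-type $m$-linear, hence in $\mathcal{M}_{R}^{+}$), so $T_{f}\in\mathcal{M}^{+}$; this settles (i). For (b), $T^{m}=S\circ(\mathrm{id}_{\mathbb{K}},\ldots,\mathrm{id}_{\mathbb{K}})$ with $S=T^{m}$ gives $\Vert T^{m}\Vert_{\mathcal{M}^{+}}\le\Vert T^{m}\Vert_{\mathcal{M}_{R}^{+}}\prod_{j}\Vert\mathrm{id}_{\mathbb{K}}\Vert_{\mathcal{B}_{j,L}^{+}}=1$, while the reverse bound follows from the domination $\Vert\cdot\Vert_{\mathcal{M}^{+}}\ge\Vert\cdot\Vert$ proved below.

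For (a) I would argue as follows. The norm $\Vert\cdot\Vert_{\mathcal{M}^{+}}$ dominates the multilinear operator norm: for $S\in\mathcal{M}_{R}^{+}(X_{1},\ldots,X_{m};F)$, unit vectors $a_{j}\in X_{j}$ and $y^{\ast}\in B_{F^{\ast}}$ (a functional, hence a regular operator of norm $\le1$), the scalar map $y^{\ast}\circ S\circ(\phi_{a_{1}},\ldots,\phi_{a_{m}})$ with $\phi_{a_{j}}(\lambda)=\lambda a_{j}$ equals $\langle S(a_{1},\ldots,a_{m}),y^{\ast}\rangle\,T^{m}$, so axioms (b), (c) for $\mathcal{M}_{R}^{+}$ yield $|\langle S(a_{1},\ldots,a_{m}),y^{\ast}\rangle|\le\Vert S\Vert_{\mathcal{M}_{R}^{+}}$, i.e. $\Vert S\Vert\le\Vert S\Vert_{\mathcal{M}_{R}^{+}}$, and likewise $\Vert u_{j}\Vert\le\Vert u_{j}\Vert_{\mathcal{B}_{j,L}^{+}}$; hence $\Vert T\Vert\le\Vert S\Vert_{\mathcal{M}_{R}^{+}}\prod_{j}\Vert u_{j}\Vert_{\mathcal{B}_{j,L}^{+}}$ for every factorization, so $\Vert T\Vert\le\Vert T\Vert_{\mathcal{M}^{+}}$, which makes $\Vert\cdot\Vert_{\mathcal{M}^{+}}$ definite; absolute homogeneity is immediate from the infimum. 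The triangle inequality is the one point that needs care: given $\varepsilon>0$ and factorizations $T_{i}=S_{i}\circ(u_{1}^{(i)},\ldots,u_{m}^{(i)})$ ($i=1,2$) with $\Vert S_{i}\Vert_{\mathcal{M}_{R}^{+}}\prod_{j}\Vert u_{j}^{(i)}\Vert_{\mathcal{B}_{j,L}^{+}}\le\Vert T_{i}\Vert_{\mathcal{M}^{+}}+\varepsilon$, one rescales the factors of each $T_{i}$ (in any one slot the substitution $u_{j}^{(i)}\mapsto t\,u_{j}^{(i)}$, $S_{i}\mapsto t^{-1}S_{i}$ leaves the factorization intact) to normalize their ideal norms, then builds the direct-sum factorization of $T_{1}+T_{2}$ through $(X_{j}^{(1)}\oplus X_{j}^{(2)})_{j}$ with the coordinate spaces carrying suitably weighted $\ell_{1}$- and $\ell_{\infty}$-type norms, and bounds the ideal norms of the new factors via the norms of the inclusion and coordinate maps — exactly the computation carried out for non-positive composition multi-ideals in \cite{BPR07,Pml}. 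When $\mathcal{M}_{R}^{+}$ and the $\mathcal{B}_{j,L}^{+}$ are complete, completeness of $\mathcal{M}^{+}$ follows from the usual criterion that every absolutely convergent series converges, again by packaging normalized factorizations of the summands into a single factorization through $\ell_{1}$- and $\ell_{\infty}$-direct sums.

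The step I expect to be the main obstacle is precisely this last family of direct-sum constructions — choosing the weights so that the triangle inequality and the completeness estimate come out with constant exactly $1$ rather than a worse quasi-norm constant; everything else is a line-by-line transcription of the corresponding linear and non-positive arguments, with $\mathcal{L}$ replaced by $\mathcal{L}^{r}$ in the slots the definition of a positive multi-ideal demands and with positivity of the test vectors inserted wherever the base classes $\mathcal{B}_{j,L}^{+}$, $\mathcal{M}_{R}^{+}$ require it.
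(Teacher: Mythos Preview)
Your proposal is correct and, in fact, far more detailed than what the paper provides: the paper gives no proof at all for this theorem, merely stating beforehand that ``the proof of the following theorem can be easily proved.'' Your verification of the positive ideal property via the factorization $v\circ T\circ(w_{1},\ldots,w_{m})=(v\circ S)\circ(u_{1}\circ w_{1},\ldots,u_{m}\circ w_{m})$, the direct-sum constructions for linearity, the triangle inequality and completeness, and the finite-type factorization through $\mathbb{K}^{k}$ are exactly the standard arguments one would supply, and your identification of the weighted direct-sum estimates as the only technically delicate point is accurate.
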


In \cite[Theorem 3.2]{BBH21}, the authors established the following
factorization 
\begin{equation*}
\mathcal{N}_{p}^{m+}=\mathcal{D}_{p}^{m+}(\Pi _{p}^{+},...,\Pi _{p}^{+}).
\end{equation*}%
In other words, the class $\left( \mathcal{N}_{p}^{m+},\eta
_{p}^{m+}(.)\right) $ represents the positive Banach multi-ideal of type $%
\mathcal{M}_{L}^{+}\left( \mathcal{B}_{1,R}^{+},...,\mathcal{B}%
_{m,R}^{+}\right) $ where $\mathcal{M}_{L}^{+}=\mathcal{D}_{p}^{m+}$ and $%
\mathcal{B}_{j,R}^{+}=\Pi _{p}^{+}$ for $1\leq j\leq m.$ On the other hand,
the class $\left( \mathcal{N}_{p}^{m+},\left\Vert .\right\Vert _{\mathcal{D}%
_{p}^{m+}\left( \Pi _{p}^{+},...,\Pi _{p}^{+}\right) }\right) $ is a
positive quasi Banach multi-ideal. Since $\mathcal{D}_{p}^{m+}=\mathcal{D}%
_{p}^{+}\circ \mathcal{L}$, we have%
\begin{equation*}
\mathcal{N}_{p}^{m+}=\mathcal{D}_{p}^{+}\circ \mathcal{L}(\Pi
_{p}^{+},...,\Pi _{p}^{+}).
\end{equation*}%
This implies that the class $\mathcal{N}_{p}^{m+}$ consists of the
multilinear operators that can be obtained by composing positive strongly $p$%
-summing operators with multilinear operators derived via a factorization
method using the positive right ideal $\Pi _{p}^{+}$.

\section{\textsc{Positive (p}$_{1}$\textsc{,...,p}$_{m}$\textsc{%
;r)-dominated multilinear operators}}

The concept of $(p_{1},\ldots ,p_{m};r)$-dominated multilinear operators was
introduced by Achour \cite{Ach11}. This notion is a natural generalization
of the concept of $(p,q)$-dominated linear operators originally studied by
Pietsch in \cite{PIETSCHoi}. In this section we study the positive
multilinear version of this concept and give a good example of a positive
multi-ideal.

\begin{definition}
\label{definition7}Consider $1\leq r,p,p_{1},\ldots ,p_{m}\leq \infty $ such
that $\frac{1}{p}=\frac{1}{p_{1}}+\ldots +\frac{1}{p_{m}}+\frac{1}{r}$. Let $%
E_{1},...,E_{m}$ and $F$ be Banach lattices. A mapping $T\in \mathcal{L}%
\left( E_{1},\ldots ,E_{m};F\right) $ is said to be positive $(p_{1},\ldots
,p_{m};r)$-dominated if there is a constant $C>0$ such that for every $%
(x_{i}^{1},...,x_{i}^{m})\in E_{1}^{+}\times ...\times E_{m}^{+}$ $\left(
1\leq i\leq n\right) $ and $y_{1}^{\ast },\ldots ,y_{n}^{\ast }\in F^{\ast
+} $, the following inequality holds: 
\begin{equation}
\left\Vert \left( \left\langle T\left( x_{i}^{1},\ldots ,x_{i}^{m}\right)
,y_{i}^{\ast }\right\rangle \right) _{i=1}^{n}\right\Vert _{p}\leq
C\prod_{j=1}^{m}\left\Vert \left( x_{i}^{j}\right) _{i=1}^{n}\right\Vert
_{p_{j},w}\left\Vert \left( y_{i}^{\ast }\right) _{i=1}^{n}\right\Vert
_{r,w}.  \label{def1sec3}
\end{equation}%
The space consisting of all such mappings is denoted by $\mathcal{D}_{\left(
p_{1},\ldots ,p_{m};r\right) }^{+}\left( E_{1},\ldots ,E_{m};F\right) $. In
this case, we define 
\begin{equation*}
d_{\left( p_{1},\ldots ,p_{m};r\right) }^{+}(T)=\inf \{C>0:C\text{ \textit{%
satisfies inequality} }(\ref{def1sec3})\}.
\end{equation*}
\end{definition}

It is easy to check that every $(p_{1},\ldots ,p_{m};r)$-dominated
multilinear operator is positive $(p_{1},\ldots ,p_{m};r)$-dominated. Then
we have through \cite[Proposition 2.4 (i)]{Ach11} we have%
\begin{equation*}
\mathcal{L}_{f}(X_{1},...,X_{m};F)\subset \mathcal{D}_{\left( p_{1},\ldots
,p_{m};r\right) }^{+}\left( E_{1},\ldots ,E_{m};F\right) .
\end{equation*}

In the next result, we give the following equivalent definition.

\begin{theorem}
\label{Theorem equivalent1}Let $1\leq r,p,p_{1},\ldots ,p_{m}\leq \infty $
with $\frac{1}{p}=\frac{1}{p_{1}}+\ldots +\frac{1}{p_{m}}+\frac{1}{r}$ and $%
T\in \mathcal{L}\left( E_{1},\ldots ,E_{m};F\right) $. The following
properties are equivalent:\newline
$(a)$ The operator $T$ is positive $(p_{1},\ldots ,p_{m};r)$-dominated.%
\newline
$(b)$ There is a constant $C>0$ such that for any $(x_{i}^{1},...,x_{i}^{m})%
\in E_{1}\times ...\times E_{m}$ $\left( 1\leq i\leq n\right) $ and $%
y_{1}^{\ast },\ldots ,y_{n}^{\ast }\in F^{\ast }$, we have 
\begin{equation}
\left\Vert \left( \left\langle T\left( x_{i}^{1},\ldots ,x_{i}^{m}\right)
,y_{i}^{\ast }\right\rangle \right) _{i=1}^{n}\right\Vert _{p}\leq
C\prod_{j=1}^{m}\left\Vert \left( x_{i}^{j}\right) _{i=1}^{n}\right\Vert
_{p_{j},\left\vert w\right\vert }\left\Vert \left( y_{i}^{\ast }\right)
_{i=1}^{n}\right\Vert _{r,\left\vert w\right\vert }.  \label{def2sec3}
\end{equation}%
In this case, we define 
\begin{equation*}
d_{\left( p_{1},\ldots ,p_{m};r\right) }^{+}(T)=\inf \{C>0:C\text{ \textit{%
satisfies inequality} }(\ref{def2sec3})\}.
\end{equation*}
\end{theorem}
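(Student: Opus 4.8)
The plan is to establish the two implications $(b)\Rightarrow(a)$ and $(a)\Rightarrow(b)$ separately. The implication $(b)\Rightarrow(a)$ is immediate: restricting $(\ref{def2sec3})$ to positive families $(x_i^j)_{i=1}^n\subset E_j^{+}$ $(1\le j\le m)$ and $(y_i^*)_{i=1}^n\subset F^{*+}$, and using the identity recorded in Section~1, namely $\|(x_i^j)_{i=1}^n\|_{p_j,|w|}=\|(x_i^j)_{i=1}^n\|_{p_j,w}$ for positive families (and likewise for the $y_i^*$), one recovers $(\ref{def1sec3})$ with the same constant. In particular $d^{+}_{(p_1,\ldots,p_m;r)}(T)\le\inf\{C:C\text{ satisfies }(\ref{def2sec3})\}$.

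For $(a)\Rightarrow(b)$ the idea is to reduce arbitrary vectors to the positive cones via the lattice decompositions $x_i^j=(x_i^j)^{+}-(x_i^j)^{-}$ and $y_i^*=(y_i^*)^{+}-(y_i^*)^{-}$. By multilinearity of $T$ and bilinearity of the duality bracket, $\langle T(x_i^1,\ldots,x_i^m),y_i^*\rangle$ equals $\sum_{\varepsilon\in\{+,-\}^{m+1}}\theta_\varepsilon\,\langle T((x_i^1)^{\varepsilon_1},\ldots,(x_i^m)^{\varepsilon_m}),(y_i^*)^{\varepsilon_0}\rangle$ with signs $\theta_\varepsilon\in\{-1,1\}$, every vector appearing now lying in a positive cone. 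Applying the triangle inequality for $\|\cdot\|_p$, then $(\ref{def1sec3})$ to each of the $2^{m+1}$ families so obtained (legitimate, since their entries are positive), and finally the order estimates $0\le(x_i^j)^{\varepsilon_j}\le|x_i^j|$ and $0\le(y_i^*)^{\varepsilon_0}\le|y_i^*|$ — which, after pairing with positive functionals and taking suprema, give $\|((x_i^j)^{\varepsilon_j})_{i=1}^n\|_{p_j,w}\le\|(|x_i^j|)_{i=1}^n\|_{w,p_j}=\|(x_i^j)_{i=1}^n\|_{p_j,|w|}$ and the analogous estimate for $(y_i^*)^{\varepsilon_0}$ — one arrives at $(\ref{def2sec3})$, at first with constant at most $2^{m+1}d^{+}_{(p_1,\ldots,p_m;r)}(T)$. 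This already shows that the two conditions describe the same class.

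To sharpen the constant to $d^{+}_{(p_1,\ldots,p_m;r)}(T)$ itself, one repeats this decomposition but invokes $(\ref{def1sec3})$ only once, applied to the enlarged families indexed by the pair $(i,\varepsilon)$: the left-hand side is then controlled by the convexity inequality $\big(\sum_\varepsilon|a_\varepsilon|\big)^p\le 2^{(m+1)(p-1)}\sum_\varepsilon|a_\varepsilon|^p$, and the weak norms of the enlarged families are evaluated using $0\le(x_i^j)^{\pm}\le|x_i^j|$ together with $\langle\phi,(x_i^j)^{+}\rangle^{p_j}+\langle\phi,(x_i^j)^{-}\rangle^{p_j}\le\langle\phi,|x_i^j|\rangle^{p_j}$ (and the analogue for the $y^*$-family), after which the exponents recombine through $\frac1p=\sum_j\frac1{p_j}+\frac1r$. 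I expect this last book-keeping — keeping precise track of the multiplicative constant produced by the enlarged-family estimate — to be the one delicate point of the argument; everything else runs parallel to \cite[Proposition~2.4(i)]{Ach11}, and the displayed formula for $d^{+}_{(p_1,\ldots,p_m;r)}(T)$ in terms of $(\ref{def2sec3})$ follows by combining this with the first paragraph.
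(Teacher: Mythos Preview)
Your first two paragraphs reproduce exactly the paper's argument: $(b)\Rightarrow(a)$ by restriction to positive families, and $(a)\Rightarrow(b)$ by splitting each $x_i^j$ and $y_i^*$ into positive and negative parts, expanding by multilinearity, and applying the triangle inequality in $\ell_p$ to the $2^{m+1}$ resulting terms. The paper carries this out only for $m=2$ and records the constant $8\,d^{+}_{(p_1,p_2;r)}(T)$; your $2^{m+1}d^{+}_{(p_1,\ldots,p_m;r)}(T)$ is the general case of the same computation.

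Your third paragraph, however, does not achieve the sharpening you hope for. In the enlarged family indexed by $(i,\varepsilon)\in\{1,\ldots,n\}\times\{+,-\}^{m+1}$, the $j$-th coordinate $(x_i^j)^{\varepsilon_j}$ depends only on $\varepsilon_j$, so each of $(x_i^j)^{+}$ and $(x_i^j)^{-}$ is repeated $2^{m}$ times. Hence the weak $p_j$-norm of the enlarged $j$-th family picks up a factor $2^{m/p_j}$ (and likewise $2^{m/r}$ for the $y^*$-family); multiplying these and using $\frac1p=\sum_j\frac1{p_j}+\frac1r$ leaves $2^{m/p}$, which combined with your convexity factor $2^{(m+1)(1-1/p)}$ gives $2^{\,m+1-1/p}$, not $1$. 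So the book-keeping you flagged as delicate in fact fails. It is worth noting that the paper's own proof does not establish the equality of the two infima either: despite the phrase ``we define'' in the statement, the displayed proof stops at the constant $8$ for $m=2$ and never returns to the norm question. Thus on the equivalence of classes your argument matches the paper, and on the norm equality neither argument is complete.
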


\begin{proof}
$(b)\Rightarrow (a):$ Immediately applying Definition \ref{definition7} for $%
(x_{i}^{1},...,x_{i}^{m})\in E_{1}^{+}\times ...\times E_{m}^{+}$, $1\leq
i\leq n$ and $y_{1}^{\ast },...,y_{n}^{\ast }\in F^{\ast +}$.

$(a)\Rightarrow (b):$ Suppose that $T$ is positive $(p_{1},\ldots ,p_{m};r)$%
-dominated. For convenience, we prove only the inequality for the case when $%
m=2$. Let $\left( x_{i}^{1},x_{i}^{2}\right) \in E_{1}\times E_{2},(1\leq
i\leq n)$ $y_{1}^{\ast },\ldots ,y_{n}^{\ast }\in F^{\ast }$, then one has%
\begin{eqnarray*}
&&(\sum_{i=1}^{n}\left\vert \left\langle T(x_{i}^{1},x_{i}^{2}),y_{i}^{\ast
}\right\rangle \right\vert ^{p})^{\frac{1}{p}}=(\sum_{i=1}^{n}\left\vert
\left\langle T\left( x_{i}^{1+}-x_{i}^{1-},x_{i}^{2+}-x_{i}^{2-}\right)
,y_{i}^{\ast }\right\rangle \right\vert ^{p})^{\frac{1}{p}} \\
&\leq &(\sum_{i=1}^{n}\left\vert \left\langle T\left(
x_{i}^{1+},x_{i}^{2+}\right) ,y_{i}^{\ast }\right\rangle \right\vert ^{p})^{%
\frac{1}{p}}+(\sum_{i=1}^{n}\left\vert \left\langle T\left(
x_{i}^{1+},x_{i}^{2-}\right) ,y_{i}^{\ast }\right\rangle \right\vert ^{p})^{%
\frac{1}{p}}+ \\
&&(\sum_{i=1}^{n}\left\vert \left\langle T\left(
x_{i}^{1-},x_{i}^{2+}\right) ,y_{i}^{\ast }\right\rangle \right\vert ^{p})^{%
\frac{1}{p}}+(\sum_{i=1}^{n}\left\vert \left\langle T\left(
x_{i}^{1-},x_{i}^{2-}\right) ,y_{i}^{\ast }\right\rangle \right\vert ^{p})^{%
\frac{1}{p}}
\end{eqnarray*}%
which is less than or equal to%
\begin{eqnarray*}
&\leq &(\sum_{i=1}^{n}\left\vert \left\langle T\left(
x_{i}^{1+},x_{i}^{2+}\right) ,y_{i}^{\ast +}\right\rangle \right\vert ^{p})^{%
\frac{1}{p}}+(\sum_{i=1}^{n}\left\vert \left\langle T\left(
x_{i}^{1+},x_{i}^{2+}\right) ,y_{i}^{\ast -}\right\rangle \right\vert ^{p})^{%
\frac{1}{p}}+ \\
&&(\sum_{i=1}^{n}\left\vert \left\langle T\left(
x_{i}^{1+},x_{i}^{2-}\right) ,y_{i}^{\ast +}\right\rangle \right\vert ^{p})^{%
\frac{1}{p}}+(\sum_{i=1}^{n}\left\vert \left\langle T\left(
x_{i}^{1+},x_{i}^{2-}\right) ,y_{i}^{\ast -}\right\rangle \right\vert ^{p})^{%
\frac{1}{p}}+ \\
&&(\sum_{i=1}^{n}\left\vert \left\langle T\left(
x_{i}^{1-},x_{i}^{2+}\right) ,y_{i}^{\ast +}\right\rangle \right\vert ^{p})^{%
\frac{1}{p}}+(\sum_{i=1}^{n}\left\vert \left\langle T\left(
x_{i}^{1-},x_{i}^{2+}\right) ,y_{i}^{\ast -}\right\rangle \right\vert ^{p})^{%
\frac{1}{p}}+ \\
&&(\sum_{i=1}^{n}\left\vert \left\langle T\left(
x_{i}^{1-},x_{i}^{2+}\right) ,y_{i}^{\ast +}\right\rangle \right\vert ^{p})^{%
\frac{1}{p}}+(\sum_{i=1}^{n}\left\vert \left\langle T\left(
x_{i}^{1-},x_{i}^{2+}\right) ,y_{i}^{\ast -}\right\rangle \right\vert ^{p})^{%
\frac{1}{p}}+ \\
&&(\sum_{i=1}^{n}\left\vert \left\langle T\left(
x_{i}^{1-},x_{i}^{2-}\right) ,y_{i}^{\ast +}\right\rangle \right\vert ^{p})^{%
\frac{1}{p}}+(\sum_{i=1}^{n}\left\vert \left\langle T\left(
x_{i}^{1-},x_{i}^{2-}\right) ,y_{i}^{\ast -}\right\rangle \right\vert ^{p})^{%
\frac{1}{p}},
\end{eqnarray*}%
finally we have%
\begin{equation*}
(\sum_{i=1}^{n}\left\vert \left\langle T(x_{i}^{1},x_{i}^{2}),y_{i}^{\ast
}\right\rangle \right\vert ^{p})^{\frac{1}{p}}\leq 8d_{\left(
p_{1},p_{2};r\right) }^{+}(T)\Vert (x_{i}^{1})_{i=1}^{n}\Vert
_{p_{1},|w|}\Vert (x_{i}^{2})_{i=1}^{n}\Vert _{p_{2},|w|}\Vert (y_{i}^{\ast
})_{i=1}^{n}\Vert _{r,|w|}.
\end{equation*}
\end{proof}

\begin{proposition}
The class $(\mathcal{D}_{\left( p_{1},...,p_{m};r\right) }^{+},d_{\left(
p_{1},...,p_{m};r\right) }^{+})$ is a Banach positive multi-ideal.
\end{proposition}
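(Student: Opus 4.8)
The plan is to verify, one after another, the defining conditions of a Banach positive multi-ideal for the pair $\big(\mathcal{D}^+_{(p_1,\ldots,p_m;r)},\,d^+_{(p_1,\ldots,p_m;r)}\big)$; nearly all of the work consists of elementary estimates for the weak summing norms. For condition $(i)$, linearity of each component $\mathcal{D}^+_{(p_1,\ldots,p_m;r)}(E_1,\ldots,E_m;F)$ together with the seminorm inequalities $d^+(S+T)\le d^+(S)+d^+(T)$ and $d^+(\lambda T)=|\lambda|\,d^+(T)$ follows at once from Minkowski's inequality in $\ell_p^n$ applied to the scalar sequences $\big(\langle T(x_i^1,\ldots,x_i^m),y_i^{\ast}\rangle\big)_{i=1}^n$; the inclusion of the $m$-linear mappings of finite type has already been noted above (via \cite{Ach11}). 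That $d^+_{(p_1,\ldots,p_m;r)}$ is actually a norm I would obtain from Theorem~\ref{Theorem equivalent1}$(b)$ applied with $n=1$: for a one-term family $\|(x)\|_{p_j,|w|}=\|x\|$ and $\|(y^{\ast})\|_{r,|w|}=\|y^{\ast}\|$, so $|\langle T(x^1,\ldots,x^m),y^{\ast}\rangle|\le d^+(T)\,\|x^1\|\cdots\|x^m\|\,\|y^{\ast}\|$, whence $\|T\|\le d^+(T)$ and in particular $d^+(T)=0\Rightarrow T=0$. This inequality $\|T\|\le d^+(T)$ will be reused below.

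For conditions $(ii)$ and $(c)$ — which I would treat together — the remark following the definition of a positive multi-ideal lets me assume $u_j\in\mathcal{L}^+(G_j;E_j)$ and $v\in\mathcal{L}^+(F;G)$. Fixing positive finite families $(x_i^j)_{i=1}^n\subset G_j^+$ and $(z_i^{\ast})_{i=1}^n\subset G^{\ast+}$, positivity of $u_j$ gives $u_j(x_i^j)\in E_j^+$ and positivity of $v$ gives $v^{\ast}(z_i^{\ast})\in F^{\ast+}$, and since $\langle \big(v\circ T\circ(u_1,\ldots,u_m)\big)(x_i^1,\ldots,x_i^m),z_i^{\ast}\rangle=\langle T(u_1x_i^1,\ldots,u_mx_i^m),v^{\ast}z_i^{\ast}\rangle$, Definition~\ref{definition7} applied to $T$ reduces everything to the two elementary facts $\|(u_jx_i^j)_{i=1}^n\|_{p_j,w}\le\|u_j\|\,\|(x_i^j)_{i=1}^n\|_{p_j,w}$ and $\|(v^{\ast}z_i^{\ast})_{i=1}^n\|_{r,w}\le\|v\|\,\|(z_i^{\ast})_{i=1}^n\|_{r,w}$. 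The first holds because $\langle\phi,u_jx_i^j\rangle=\langle u_j^{\ast}\phi,x_i^j\rangle$ and $u_j^{\ast}$ maps $B_{E_j^{\ast}}^+$ into $\|u_j\|B_{G_j^{\ast}}^+$; the second because $\langle y^{\ast\ast},v^{\ast}z_i^{\ast}\rangle=\langle v^{\ast\ast}y^{\ast\ast},z_i^{\ast}\rangle$ and $v^{\ast\ast}$ maps $B_{F^{\ast\ast}}^+$ into $\|v\|B_{G^{\ast\ast}}^+$, using that for a positive sequence in the Banach lattice $F^{\ast}$ the weak-$r$ norm is, as recalled in Section~1, a supremum over $B_{F^{\ast\ast}}^+$. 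Combining the three yields $v\circ T\circ(u_1,\ldots,u_m)\in\mathcal{D}^+_{(p_1,\ldots,p_m;r)}(G_1,\ldots,G_m;G)$ with $d^+\big(v\circ T\circ(u_1,\ldots,u_m)\big)\le\|v\|\,\|u_1\|\cdots\|u_m\|\,d^+(T)$, giving both $(ii)$ and $(c)$; the passage to regular $u_j,v$ is the standard four-term linearity argument of the remarks followed by taking infima over decompositions.

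Condition $(b)$ is a direct computation: on $\mathbb{K}$ one has $\|(a_i)_{i=1}^n\|_{s,w}=\big(\sum_i|a_i|^s\big)^{1/s}$ for every $s$, so the defining inequality for the form $T^m(\lambda^1,\ldots,\lambda^m)=\lambda^1\cdots\lambda^m$ reads $\big\|(x_i^1\cdots x_i^m y_i^{\ast})_{i=1}^n\big\|_p\le C\prod_{j=1}^m\|(x_i^j)_{i=1}^n\|_{p_j}\,\|(y_i^{\ast})_{i=1}^n\|_r$ for nonnegative scalars, which for $\tfrac1p=\sum_{j=1}^m\tfrac1{p_j}+\tfrac1r$ is exactly the iterated Hölder inequality; hence $d^+(T^m)\le1$, while $d^+(T^m)\ge\|T^m\|=1$ by $\|T\|\le d^+(T)$. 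For completeness $(a)$: a $d^+$-Cauchy sequence $(T_k)$ is $\|\cdot\|$-Cauchy, hence converges in $\mathcal{L}(E_1,\ldots,E_m;F)$ to some $T$; fixing $k$ with $d^+(T_k-T_\ell)<\varepsilon$ for all large $\ell$, holding positive finite families fixed in the defining inequality for $T_k-T_\ell$ and letting $\ell\to\infty$ in the resulting finite sums gives $d^+(T_k-T)\le\varepsilon$, so $T=T_k-(T_k-T)\in\mathcal{D}^+_{(p_1,\ldots,p_m;r)}$ and $T_k\to T$ in $d^+$. I expect the only place calling for genuine care to be the handling of the weak summing norms under the adjoints $u_j^{\ast}$ and $v^{\ast\ast}$ on the positive cones — in particular identifying the weak-$r$ norm of a positive sequence in a dual space with a supremum over $B_{F^{\ast\ast}}^+$ — while the remaining verifications are routine.
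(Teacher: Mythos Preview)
Your proposal is correct and, for the only step the paper actually writes out (the positive ideal property together with the norm inequality), it follows exactly the paper's route: restrict to positive $u_j$ and $v$, use $u_j(x_i^j)\in E_j^+$ and $v^{\ast}(z_i^{\ast})\in F^{\ast+}$, apply Definition~\ref{definition7} to $T$, and bound the weak norms by $\|u_j\|$ and $\|v\|$. The remaining items you verify in detail --- that $d^+_{(p_1,\ldots,p_m;r)}$ is a norm with $\|T\|\le d^+(T)$, that $d^+(T^m)=1$ via H\"older, and completeness --- are precisely what the paper dismisses with ``the rest is straightforward,'' so your write-up is simply a more complete version of the same proof.
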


\begin{proof}
We will verify the positive ideal property; the proof of the rest is
straightforward. Let $E_{1},\ldots ,E_{m}$ and $F$ are Banach lattices. Let $%
T\in \mathcal{D}_{\left( p_{1},...,p_{m};r\right) }^{+}\left( E_{1},\ldots
,E_{m};F\right) ,$ $u_{j}\in \mathcal{L}^{+}\left( G_{j};E_{j}\right) $ $%
\left( 1\leq j\leq m\right) $ and $v\in \mathcal{L}^{+}(F;G)$ where $%
G_{1},\ldots ,G_{m}$ and $G$ are Banach lattices. Let $%
(x_{i}^{1},...,x_{i}^{m})\in G_{1}^{+}\times ...\times G_{m}^{+}$ $\left(
1\leq i\leq n\right) $ and $y_{1}^{\ast },\ldots ,y_{n}^{\ast }\in G^{\ast
+}.$ Since $T\in \mathcal{D}_{\left( p_{1},...,p_{m};r\right) }^{+}\left(
E_{1},\ldots ,E_{m};F\right) ,$ $u_{j}\left( x_{i}^{j}\right) \geq 0$ and $%
v^{\ast }\left( y_{i}^{\ast }\right) \geq 0$ $\left( 1\leq j\leq m,1\leq
i\leq n\right) $ we have%
\begin{eqnarray*}
&&\left\Vert \left( \left\langle v\circ T\circ \left( u_{1},\ldots
,u_{m}\right) \left( x_{i}^{1},\ldots ,x_{i}^{m}\right) ,y_{i}^{\ast
}\right\rangle \right) _{i=1}^{n}\right\Vert _{p} \\
&=&\left\Vert \left( \left\langle T\left( u_{1}\left( x_{i}^{1}\right)
,\ldots ,u_{m}\left( x_{i}^{m}\right) \right) ,v^{\ast }\left( y_{i}^{\ast
}\right) \right\rangle \right) _{i=1}^{n}\right\Vert _{p} \\
&\leq &d_{\left( p_{1},\ldots ,p_{m};r\right)
}^{+}(T)\prod_{j=1}^{m}\left\Vert \left( u_{j}\left( x_{i}^{j}\right)
\right) _{i=1}^{n}\right\Vert _{p_{j},w}\left\Vert \left( v^{\ast }\left(
y_{i}^{\ast }\right) \right) _{i=1}^{n}\right\Vert _{r,w} \\
&\leq &d_{\left( p_{1},\ldots ,p_{m};r\right) }^{+}(T)\left\Vert
u_{1}\right\Vert \ldots \left\Vert u_{m}\right\Vert \Vert v\Vert
\prod_{j=1}^{m}\left\Vert \left( x_{i}^{j}\right) _{i=1}^{n}\right\Vert
_{p_{i},w}\left\Vert \left( y_{i}^{\ast }\right) _{i=1}^{n}\right\Vert _{r,w}
\end{eqnarray*}%
thus $v\circ T\circ \left( u_{1},\ldots ,u_{m}\right) $ is in $\mathcal{D}%
_{\left( p_{1},...,p_{m};r\right) }^{+}\left( G_{1},\ldots ,G_{m};G\right) $
and we have%
\begin{equation*}
d_{\left( p_{1},\ldots ,p_{m};r\right) }^{+}\left( v\circ T\circ \left(
u_{1},\ldots ,u_{m}\right) \right) \leq d_{\left( p_{1},\ldots
,p_{m};r\right) }^{+}(T)\left\Vert u_{1}\right\Vert \ldots \left\Vert
u_{m}\right\Vert \Vert v\Vert .
\end{equation*}
\end{proof}

\begin{proposition}
\label{PropoCompo}Let $A$ be a Cohen positive strongly $r^{\ast }$-summing
multilinear operator and $u_{j}$ be positive $p_{j}$-summing linear
operators with $1\leq j\leq m$. Then $T=A\circ \left( u_{1},\ldots
,u_{m}\right) $ is positive $(p_{1},...,p_{m};r)$-dominated and we have%
\begin{equation*}
d_{\left( p_{1},\ldots ,p_{m};r\right) }^{+}(T)\leq d_{r^{\ast
}}^{m+}(A)\prod\limits_{j=1}^{m}\pi _{p_{j}}^{+}\left( u_{j}\right) .
\end{equation*}
\end{proposition}

\begin{proof}
By \cite[Theorem 2.5]{BB18}, there exists $\mu $ on $B_{F^{\ast \ast }}^{+}$
such that, for all $x^{j}\in E_{j}$ $(1\leq j\leq m)$ and $y^{\ast }\in
B_{F^{\ast }}^{+},\ $we have 
\begin{equation*}
\begin{tabular}{lll}
$\left\vert \left\langle T(x^{1},...,x^{m}),y^{\ast }\right\rangle
\right\vert $ & $=$ & $\left\vert \left\langle A\left( u_{1}\left(
x^{1}\right) ,\ldots ,u_{m}\left( x^{m}\right) \right) ,y^{\ast
}\right\rangle \right\vert $ \\ 
& $\leq $ & $d_{r^{\ast }}^{m+}(A)\prod\limits_{j=1}^{m}\left\Vert
u_{j}\left( x^{j}\right) \right\Vert \left( \int_{B_{F^{\ast \ast
}}^{+}}\left\vert \left\langle y^{\ast },y^{\ast \ast }\right\rangle
\right\vert ^{r}d\mu \right) ^{\frac{1}{r}}.$%
\end{tabular}%
\end{equation*}%
Since $u_{j}$ is positive $p_{j}$-summing then\textbf{, }by (\ref%
{DomiSumming}) there is a probability measure $\mu _{j}$ on $B_{E_{j}^{\ast
}}^{+}$ such that for all $x^{j}\in E_{j}^{+}$ 
\begin{equation*}
\left\Vert u_{j}\left( x^{j}\right) \right\Vert \leq \pi _{p_{j}}^{+}\left(
u_{j}\right) \left( \int_{B_{E_{j}^{\ast }}^{+}}\langle x^{j},x_{j}^{\ast
}\rangle ^{p_{j}}d\mu _{j}\right) ^{\frac{1}{p_{j}}}.
\end{equation*}%
Consequently%
\begin{eqnarray*}
&&\left\vert \left\langle T(x^{1},...,x^{m}),y^{\ast }\right\rangle
\right\vert \\
&\leq &d_{r^{\ast }}^{m+}(A)\dprod\limits_{j=1}^{m}\pi _{p_{j}}^{+}\left(
u_{j}\right) \left( \int_{B_{E_{j}^{\ast }}^{+}}\langle x^{j},x_{j}^{\ast
}\rangle ^{p_{j}}d\mu _{j}\right) ^{\frac{1}{p_{j}}}\left( \int_{B_{F^{\ast
\ast }}^{+}}\left\vert \left\langle y^{\ast },y^{\ast \ast }\right\rangle
\right\vert ^{r}d\mu \right) ^{\frac{1}{r}}
\end{eqnarray*}
Therefore, $T$ is positive $(p_{1},...,p_{m};r)$-dominated by Theorem \ref%
{thdo1} and 
\begin{equation*}
d_{\left( p_{1},\ldots ,p_{m};r\right) }^{+}(T)\leq d_{r^{\ast
}}^{m+}(A)\prod\limits_{j=1}^{m}\pi _{p_{j}}^{+}\left( u_{j}\right) .
\end{equation*}
\end{proof}

Now, we characterize the positive $(p_{1},\ldots ,p_{m};r)$-dominated
multilinear operators by the Pietsch domination theorem. For this purpose,
we use the full general Pietsch domination theorem given by Pellegrino et
al. in \cite[Theorem 4.6]{PSS12}.

\begin{theorem}[Pietsch domination theorem]
\label{thdo1} Let $1\leq r,p,p_{1},\ldots ,p_{m}\leq \infty $ with $\frac{1}{%
p}=\frac{1}{p_{1}}+\ldots +\frac{1}{p_{m}}+\frac{1}{r}$. Let $%
E_{1},...,E_{m} $ and $F$ be Banach lattices. The following statements are
equivalent:

1) The operator $T\in \mathcal{L}\left( E_{1},\ldots ,E_{m};F\right) $ is
positive $(p_{1},\ldots ,p_{m};r)$-dominated.

2) There is a constant $C>0$ and Borel probability measures $\mu _{j}$ on $%
B_{E_{j}^{\ast }}^{+}$ ($1\leq j\leq m$) and $\mu _{m+1}$ on $B_{F^{\ast
\ast }}^{+}$ such that 
\begin{equation}
\begin{array}{ll}
& |\langle T(x^{1},...,x^{m}),y^{\ast }\rangle | \\ 
& \leq C\prod_{j=1}^{m}\left( \int_{B_{E_{j}^{\ast }}^{+}}\langle
|x^{j}|,x_{j}^{\ast }\rangle ^{p_{j}}d\mu _{j}\right) ^{\frac{1}{p_{j}}%
}\left( \int_{B_{F^{\ast \ast }}^{+}}\langle |y^{\ast }|,y^{\ast \ast
}\rangle ^{r}d\mu _{m+1}\right) ^{\frac{1}{r}}%
\end{array}
\label{def2sec4}
\end{equation}%
for all $(x^{1},...,x^{m},y^{\ast })\in E_{1}\times ...\times E_{m}\times
F^{\ast }.$ Therefore, we have%
\begin{equation*}
d_{\left( p_{1},\ldots ,p_{m};r\right) }^{+}(T)=\inf \{C>0:C\text{ \textit{%
satisfies inequality} }(\ref{def2sec4})\}.
\end{equation*}

3) There is a constant $C>0$ and Borel probability measures $\mu _{j}$ on $%
B_{E_{j}^{\ast }}^{+}$ ($1\leq j\leq m$) and $\mu _{m+1}$ on $B_{F^{\ast
\ast }}^{+}$ such that%
\begin{equation}
\begin{array}{ll}
& |\langle T(x^{1},...,x^{m}),y^{\ast }\rangle | \\ 
& \leq C\prod_{j=1}^{m}\left( \int_{B_{E_{j}^{\ast }}^{+}}\langle
x^{j},x_{j}^{\ast }\rangle ^{p_{j}}d\mu _{j}\right) ^{\frac{1}{p_{j}}}\left(
\int_{B_{F^{\ast \ast }}^{+}}\langle y^{\ast },y^{\ast \ast }\rangle
^{r}d\mu _{m+1}\right) ^{\frac{1}{r}}%
\end{array}
\label{def2sec5}
\end{equation}%
for all $(x^{1},...,x^{m},y^{\ast })\in E_{1}^{+}\times ...\times
E_{m}^{+}\times F^{\ast +}$. Therefore, we have%
\begin{equation*}
d_{\left( p_{1},\ldots ,p_{m};r\right) }^{+}(T)=\inf \{C>0:C\text{ \textit{%
satisfies inequality} }(\ref{def2sec5})\}.
\end{equation*}
\end{theorem}

\begin{proof}
$1)\Leftrightarrow 2):$ Choosing the parameters%
\begin{equation*}
\left\{ 
\begin{array}{l}
K_{j}=B_{E_{j}^{\ast }}^{+},\text{ }j=1,\ldots ,m \\ 
K_{m+1}=B_{F^{\ast \ast }}^{+} \\ 
S\left( T,\lambda ,x^{1},\ldots ,x^{m},y^{\ast }\right) =|\langle
T(x^{1},...,x^{m}),y^{\ast }\rangle | \\ 
R_{j}(x_{j}^{\ast },\lambda ,x^{j})=\langle |x^{j}|,x_{j}^{\ast }\rangle
,j=1,\ldots ,m \\ 
R_{m+1}(y^{\ast \ast },\lambda ,y^{\ast })=\langle |y^{\ast }|,y^{\ast \ast
}\rangle .%
\end{array}%
\right.
\end{equation*}%
These maps satisfy the conditions $\left( 1\right) $ and $\left( 2\right) $
in \cite[Page 1255]{PSS12}. From this, we can easily conclude that $%
T:E_{1}\times ...\times E_{m}\rightarrow F$ is positive $(p_{1},\ldots
,p_{m};r)$-dominated if, and only if, 
\begin{equation*}
\begin{array}{lll}
S\left( T,\lambda ,x^{1},\ldots ,x^{m},y^{\ast }\right) & \leq & 
C\prod_{j=1}^{m}\left( \int_{K_{j}}R_{j}(\varphi _{j},\lambda
,x^{j})^{p_{j}}d\mu _{j}\right) ^{\frac{1}{p_{j}}} \\ 
&  & \times \left( \int_{K_{m+1}}R_{m+1}(\varphi _{m+1},\lambda ,y^{\ast
})^{r}\right) ^{\frac{1}{r}}.%
\end{array}%
\end{equation*}%
i.e., $T$ is $R_{1},...,R_{m+1}$-$S$-abstract $(p_{1},...,p_{m};r)$-summing.
Theorem \cite[Theorem 4.6]{PSS12} states that $T$ is $R_{1},...,R_{m+1}$-$S$%
-abstract $(p_{1},...,p_{m};r)$-summing if and only if, there exists a
positive constant $C$ and probability measures $\mu _{j}$ on $K_{j}$, $%
j=1,...,m+1,$ such that%
\begin{equation*}
\begin{array}{lll}
S\left( T,\lambda ,x^{1},\ldots ,x^{m},y^{\ast }\right) & \leq & 
C\prod_{j=1}^{m}\left( \int_{K_{j}}R_{j}(\varphi _{j},\lambda
,x^{j})^{p_{j}}d\mu _{j}\right) ^{\frac{1}{p_{j}}} \\ 
&  & \times \left( \int_{K_{m+1}}R_{m+1}(\varphi _{m+1},\lambda ,y^{\ast
})^{r}\right) ^{\frac{1}{r}}.%
\end{array}%
\end{equation*}%
Consequently 
\begin{equation}
\begin{array}{ll}
& |\langle T(x^{1},...,x^{m}),y^{\ast }\rangle | \\ 
& \leq C\prod_{j=1}^{m}\left( \int_{B_{E_{j}^{\ast }}^{+}}\langle
|x^{j}|,x_{j}^{\ast }\rangle ^{p_{j}}d\mu _{j}\right) ^{\frac{1}{p_{j}}%
}\left( \int_{B_{F^{\ast \ast }}^{+}}\langle |y^{\ast }|,y^{\ast \ast
}\rangle ^{r}d\mu _{m+1}\right) ^{\frac{1}{r}}.%
\end{array}%
\end{equation}

$2)\Leftrightarrow 3):$ Straightforward by using the idea of the proof of
Theorem \ref{Theorem equivalent1}.
\end{proof}

As an immediate consequence of Theorem \ref{thdo1}, we can show that if $%
p_{j}\leq q_{j}$ and $r\leq s$ then 
\begin{equation*}
\mathcal{D}_{\left( p_{1},...,p_{m};r\right) }^{+}\left( E_{1},\ldots
,E_{m};F\right) \subset \mathcal{D}_{\left( q_{1},...,q_{m};s\right)
}^{+}\left( E_{1},\ldots ,E_{m};F\right) .
\end{equation*}%
The following result demonstrates that the class of positive $\left(
p_{1},\ldots ,p_{m};r\right) $-dominated multilinear operators can be
construed as%
\begin{equation*}
\mathcal{D}_{\left( p_{1},\ldots ,p_{m};r\right) }^{+}=\mathcal{D}_{r^{\ast
}}^{m+}\left( \Pi _{p_{1}}^{+},\ldots ,\Pi _{p_{m}}^{+}\right) .
\end{equation*}%
This represents a positive variant of the Kwapie\'{n} factorization.

\begin{theorem}
\label{thfa} Let $1\leq r,p,p_{1},\ldots ,p_{m}\leq \infty $ with $\frac{1}{p%
}=\frac{1}{p_{1}}+\ldots +\frac{1}{p_{m}}+\frac{1}{r}$. Then, $T\in \mathcal{%
L}(E_{1},...,E_{m};F)$ is positive $\left( p_{1},\ldots ,p_{m};r\right) $%
-dominated if and only if there exist Banach spaces $X_{1},\ldots ,X_{m}$, a
Cohen positive strongly $r^{\ast }$-summing multilinear operator $%
A:X_{1}\times ...\times X_{m}\rightarrow F$ and linear operators $u_{j}\in
\Pi _{p_{j}}^{+}\left( E_{j};X_{j}\right) $ so that $T=A\circ \left(
u_{1},\ldots ,u_{m}\right) $, i.e., 
\begin{equation*}
\mathcal{D}_{\left( p_{1},\ldots ,p_{m};r\right) }^{+}(E_{1},...,E_{m};F)=%
\mathcal{D}_{r^{\ast }}^{m+}\left( \Pi _{p_{1}}^{+},\ldots ,\Pi
_{p_{m}}^{+}\right) (E_{1},...,E_{m};F).
\end{equation*}%
Moreover 
\begin{equation*}
d_{\left( p_{1},\ldots ,p_{m};r\right) }^{+}(T)=\inf \left\{ d_{r^{\ast
}}^{m+}(A)\prod_{j=1}^{m}\pi _{p_{j}}^{+}\left( u_{j}\right) :T=A\circ
\left( u_{1},\ldots ,u_{m}\right) \right\} .
\end{equation*}
\end{theorem}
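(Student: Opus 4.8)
The plan is to establish both inclusions of the stated identity together with the matching norm inequalities, the main tool being the Pietsch domination theorem just proved (Theorem \ref{thdo1}). For the inclusion $\mathcal{D}_{r^{\ast}}^{m+}(\Pi_{p_{1}}^{+},\ldots,\Pi_{p_{m}}^{+})\subseteq\mathcal{D}_{\left(p_{1},\ldots,p_{m};r\right)}^{+}$, suppose $T=S\circ(R_{1},\ldots,R_{m})$ with $R_{j}\in\Pi_{p_{j}}^{+}(E_{j};X_{j})$ and $S\in\mathcal{D}_{r^{\ast}}^{m+}(X_{1},\ldots,X_{m};F)$. The Pietsch domination \eqref{DomiSumming} supplies probability measures $\mu_{j}$ on $B_{E_{j}^{\ast}}^{+}$ with $\Vert R_{j}x\Vert\le\pi_{p_{j}}^{+}(R_{j})(\int_{B_{E_{j}^{\ast}}^{+}}\langle x,\varphi_{j}\rangle^{p_{j}}d\mu_{j})^{1/p_{j}}$ for $x\in E_{j}^{+}$, and, since $\mathcal{D}_{r^{\ast}}^{m+}=\mathcal{D}_{r^{\ast}}^{+}\circ\mathcal{L}$, evaluating \eqref{domPS} for the linearization $S_{L}$ on elementary tensors supplies a probability measure $\mu_{m+1}$ on $B_{F^{\ast\ast}}^{+}$ with $|\langle S(z^{1},\ldots,z^{m}),y^{\ast}\rangle|\le d_{r^{\ast}}^{m+}(S)\Vert z^{1}\Vert\cdots\Vert z^{m}\Vert(\int_{B_{F^{\ast\ast}}^{+}}\langle|y^{\ast}|,\psi\rangle^{r}d\mu_{m+1})^{1/r}$. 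Composing these for $x^{j}\in E_{j}^{+}$ and $y^{\ast}\in F^{\ast+}$ yields precisely inequality $(3)$ of Theorem \ref{thdo1} with constant $\kappa:=d_{r^{\ast}}^{m+}(S)\prod_{j}\pi_{p_{j}}^{+}(R_{j})$; inserting finite families and applying H\"older's inequality in the summation index with exponents $p_{1},\ldots,p_{m},r$ (using that the $\mu_{j}$ are probability measures) recovers \eqref{def1sec3} with the same constant, so $d_{\left(p_{1},\ldots,p_{m};r\right)}^{+}(T)\le\kappa$ and, after the infimum over factorizations, $d_{\left(p_{1},\ldots,p_{m};r\right)}^{+}(T)\le\Vert T\Vert_{\mathcal{D}_{r^{\ast}}^{m+}(\Pi_{p_{1}}^{+},\ldots,\Pi_{p_{m}}^{+})}$.

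For the reverse inclusion, let $T$ be positive $(p_{1},\ldots,p_{m};r)$-dominated and $C=d_{\left(p_{1},\ldots,p_{m};r\right)}^{+}(T)$. By part $(2)$ of Theorem \ref{thdo1} there are probability measures $\mu_{j}$ on $B_{E_{j}^{\ast}}^{+}$ and $\mu_{m+1}$ on $B_{F^{\ast\ast}}^{+}$ such that, setting $q_{j}(x):=(\int_{B_{E_{j}^{\ast}}^{+}}\langle|x|,\varphi_{j}\rangle^{p_{j}}d\mu_{j})^{1/p_{j}}$,
\[
|\langle T(x^{1},\ldots,x^{m}),y^{\ast}\rangle|\le C\prod_{j=1}^{m}q_{j}(x^{j})\Big(\int_{B_{F^{\ast\ast}}^{+}}\langle|y^{\ast}|,\varphi_{m+1}\rangle^{r}d\mu_{m+1}\Big)^{1/r}
\]
for all $(x^{1},\ldots,x^{m},y^{\ast})\in E_{1}\times\cdots\times E_{m}\times F^{\ast}$. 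Each $q_{j}$ is a Riesz seminorm on $E_{j}$ with $q_{j}\le\Vert\cdot\Vert$; take $X_{j}$ to be the completion of $E_{j}/\ker q_{j}$ for $q_{j}$ and $R_{j}:E_{j}\to X_{j}$ the canonical contraction. Since $|x|=x$ on $E_{j}^{+}$ and $\mu_{j}$ is a probability measure, $R_{j}\in\Pi_{p_{j}}^{+}(E_{j};X_{j})$ with $\pi_{p_{j}}^{+}(R_{j})\le1$. Define $S$ on the linearly dense set $\{(R_{1}x^{1},\ldots,R_{m}x^{m}):x^{j}\in E_{j}\}$ by $S(R_{1}x^{1},\ldots,R_{m}x^{m})=T(x^{1},\ldots,x^{m})$: if $q_{j}(z)=0$ then $\langle|z|,\varphi_{j}\rangle=0$ $\mu_{j}$-a.e., hence $\langle z^{+},\varphi_{j}\rangle=\langle z^{-},\varphi_{j}\rangle=0$ $\mu_{j}$-a.e., and the displayed inequality forces $T(\ldots,z,\ldots)=0$, so $S$ is well defined; the same inequality with $(\int_{B_{F^{\ast\ast}}^{+}}\langle|y^{\ast}|,\varphi_{m+1}\rangle^{r}d\mu_{m+1})^{1/r}\le\Vert y^{\ast}\Vert$ gives $\Vert S(R_{1}x^{1},\ldots,R_{m}x^{m})\Vert\le C\prod_{j}q_{j}(x^{j})=C\prod_{j}\Vert R_{j}x^{j}\Vert$, so $S$ extends to a bounded $m$-linear operator on $X_{1}\times\cdots\times X_{m}$ and $T=S\circ(R_{1},\ldots,R_{m})$.

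It remains to check $S\in\mathcal{D}_{r^{\ast}}^{m+}$ with $d_{r^{\ast}}^{m+}(S)\le C$: for $w_{i}^{\ast}\in F^{\ast+}$ the displayed estimate gives $|\langle T(x_{i}^{1},\ldots,x_{i}^{m}),w_{i}^{\ast}\rangle|\le C\prod_{j}q_{j}(x_{i}^{j})(\int_{B_{F^{\ast\ast}}^{+}}\langle w_{i}^{\ast},\varphi_{m+1}\rangle^{r}d\mu_{m+1})^{1/r}$, and H\"older in $i$ with exponents $r^{\ast}$ and $r$, combined with $(\sum_{i}\int_{B_{F^{\ast\ast}}^{+}}\langle w_{i}^{\ast},\varphi_{m+1}\rangle^{r}d\mu_{m+1})^{1/r}\le\Vert(w_{i}^{\ast})_{i}\Vert_{w,r}$, yields the Cohen positive strongly $r^{\ast}$-summing inequality for $S$ on the dense set, hence by continuity on all of $X_{1}\times\cdots\times X_{m}$. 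Consequently $d_{r^{\ast}}^{m+}(S)\prod_{j}\pi_{p_{j}}^{+}(R_{j})\le C=d_{\left(p_{1},\ldots,p_{m};r\right)}^{+}(T)$, so $\Vert T\Vert_{\mathcal{D}_{r^{\ast}}^{m+}(\Pi_{p_{1}}^{+},\ldots,\Pi_{p_{m}}^{+})}\le d_{\left(p_{1},\ldots,p_{m};r\right)}^{+}(T)$, and together with the first part this gives the asserted identity of sets and of norms.

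The step I expect to be the real obstacle is precisely the construction in the second paragraph: one must use the \emph{modulus} form (part $(2)$ of Theorem \ref{thdo1}) of the Pietsch domination and complete $E_{j}$ under the Riesz seminorm $q_{j}$ built from $\langle|x|,\varphi_{j}\rangle$, rather than factoring $R_{j}$ naively through $L_{p_{j}}(\mu_{j})$ via $x\mapsto\langle x,\varphi_{j}\rangle$; with the naive choice $R_{j}z=0$ need not imply $T(\ldots,z,\ldots)=0$, so $S$ would fail to be well defined. The endpoint cases $p_{j}=\infty$ (where $\Pi_{\infty}^{+}=\mathcal{L}$) and $r=\infty$ (where $r^{\ast}=1$ and $\mathcal{D}_{1}^{m+}=\mathcal{L}$) are absorbed by the usual conventions.
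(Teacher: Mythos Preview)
Your proof is correct and follows the same overall architecture as the paper: both directions are obtained from the Pietsch domination Theorem~\ref{thdo1}, the ``if'' part by composing the dominations for $S$ and for the $R_{j}$, and the ``only if'' part by building $R_{j}$ and $S$ out of the measures $\mu_{j},\mu_{m+1}$.

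Where you diverge from the paper is precisely the point you flag as the obstacle. The paper takes $R_{j}^{0}:E_{j}\to L_{p_{j}}(B_{E_{j}^{\ast}}^{+},\mu_{j})$, $x\mapsto\langle x,\cdot\rangle$, sets $X_{j}=\overline{R_{j}^{0}(E_{j})}$, and defines $S_{0}(R_{1}^{0}x^{1},\ldots,R_{m}^{0}x^{m})=T(x^{1},\ldots,x^{m})$, justifying well-definedness by the inequality $|\langle T(x^{1},\ldots,x^{m}),y^{\ast}\rangle|\le C\prod_{j}\|R_{j}^{0}(x^{j})\|(\int\langle y^{\ast},y^{\ast\ast}\rangle^{r}d\mu)^{1/r}$. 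You instead complete $E_{j}$ under the Riesz seminorm $q_{j}(x)=(\int\langle|x|,\varphi\rangle^{p_{j}}d\mu_{j})^{1/p_{j}}$ and let $R_{j}$ be the quotient map. The gain is exactly what you say: from $R_{j}^{0}(z)=0$ one only gets $\langle z,\varphi\rangle=0$ $\mu_{j}$-a.e., which does \emph{not} force $\langle|z|,\varphi\rangle=0$ $\mu_{j}$-a.e., so the bound coming from part~(2) of Theorem~\ref{thdo1} need not vanish and the paper's well-definedness argument is, as written, incomplete for non-positive $x^{j}$. Your seminorm $q_{j}$ dominates $\|R_{j}^{0}(\cdot)\|_{L_{p_{j}}}$, coincides with it on $E_{j}^{+}$ (so $\pi_{p_{j}}^{+}(R_{j})\le1$ still holds), and makes the kernel match the null space forced by the domination, so $S$ is genuinely well defined. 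Your verification that $S\in\mathcal{D}_{r^{\ast}}^{m+}$ with $d_{r^{\ast}}^{m+}(S)\le C$ via H\"older in $i$ and $(\sum_{i}\int\langle w_{i}^{\ast},\varphi\rangle^{r}d\mu_{m+1})^{1/r}\le\|(w_{i}^{\ast})\|_{w,r}$ is also more explicit than the paper's. In short, same strategy, but your factorization step is a careful refinement that closes a gap the paper glosses over.
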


\begin{proof}
Suppose that $T=A\circ \left( u_{1},\ldots ,u_{m}\right) $ where $u_{j}$ is
positive $p_{j}$-summing and $A$ is Cohen positive strongly $r^{\ast }$%
-summing multilinear operator. The result follows immediately from
Proposition \ref{PropoCompo}.

Conversely, let $T\in \mathcal{D}_{\left( p_{1},\ldots ,p_{m};r\right)
}^{+}(E_{1},...,E_{m};F).$ By Theorem \ref{thdo1}, there exist probability
measures $\mu _{j}$ on $K_{j}=B_{E_{j}^{\ast }}^{+}$ and $\mu $ on $%
B_{F^{\ast \ast }}^{+}$ such that for all $x^{j}\in E_{j}^{+}$ and $y^{\ast
}\in F^{\ast +}$ we have%
\begin{eqnarray*}
&&\left\vert \left\langle T(x^{1},...,x^{m}),y^{\ast }\right\rangle
\right\vert \\
&\leq &d_{\left( p_{1},\ldots ,p_{m};r\right)
}^{+}(T)\dprod\limits_{j=1}^{m}\left( \int_{K_{j}}\langle x^{j},x_{j}^{\ast
}\rangle ^{p_{j}}d\mu _{j}\right) ^{\frac{1}{p_{j}}}\left( \int_{B_{F^{\ast
\ast }}^{+}}\left\vert \left\langle y^{\ast },y^{\ast \ast }\right\rangle
\right\vert ^{r}d\mu \right) ^{\frac{1}{r}}
\end{eqnarray*}
Consider the operator $u_{j}^{0}:E_{j}\rightarrow L_{p_{j}}\left( K_{j},\mu
_{j}\right) $ defined by 
\begin{equation*}
u_{j}^{0}\left( x^{j}\right) :x_{j}^{\ast }\mapsto x_{j}^{\ast }(x^{j}).
\end{equation*}%
For all $x^{j}\in E_{j}^{+}$ with $1\leq j\leq m$ we have 
\begin{equation*}
\left\Vert u_{j}^{0}\left( x^{j}\right) \right\Vert =\left(
\int_{K_{j}}\langle x^{j},x_{j}^{\ast }\rangle ^{p_{j}}d\mu _{j}\right) ^{%
\frac{1}{p_{j}}}\leq \left\Vert x^{j}\right\Vert .
\end{equation*}%
Let $X_{j}$ be the closure in $L_{p_{j}}(K_{j},\mu _{j})$ of the range of $%
u_{j}^{0}$, and let $u_{j}:E_{j}\rightarrow X_{j}$ be the induced operator.
The operator $u_{j}$ is positive $p_{j}$-summing with $\pi
_{p_{j}}^{+}\left( u_{j}\right) =1$. Let $A_{0}$ be the multilinear operator
defined on $u_{1}^{0}\left( E_{1}\right) \times \ldots \times
u_{m}^{0}\left( E_{m}\right) $ by 
\begin{equation*}
A_{0}\left( u_{1}^{0}\left( x^{1}\right) ,\ldots ,u_{m}^{0}\left(
x^{m}\right) \right) =T\left( x^{1},...,x^{m}\right) .
\end{equation*}%
By $\left( \ref{def2sec5}\right) ,$ we have 
\begin{equation*}
\begin{aligned} & \left| \left\langle A_{0}\left( u_{1}^{0}\left(
x^{1}\right) ,\ldots ,u_{m}^{0}\left(x^{m}\right) \right),y^*\right\rangle
\right| \\&\leq d_{p_1,...,p_m;r}^{+}(T) \prod_{j=1}^{m}\left \Vert
u_{j}^{0}\left( x^{j}\right) \right \Vert \left(
\int_{B_{F^{**}}^{+}}\left\langle y^{*},y^{**}\right\rangle ^r d\mu
\right)^{\frac{1}{r} } . \end{aligned}
\end{equation*}%
Let $A$ be the unique bounded multilinear extension of $A_{0}$ to $%
X_{1}\times \cdots \times X_{m}$. The operator $A$ is Cohen positive
strongly $r^{\ast }$-summing multilinear operator and $d_{r^{\ast
}}^{m+}(A)\leq d_{\left( p_{1},\ldots ,p_{m};r\right) }^{+}(T).$ This
implies that%
\begin{equation*}
d_{r^{\ast }}^{m+}(A)\prod_{j=1}^{m}\pi _{p_{j}}^{+}\left( u_{j}\right) \leq
d_{\left( p_{1},\ldots ,p_{m};r\right) }^{+}(T)
\end{equation*}%
Finally, $T=A\circ \left( u_{1},\ldots ,u_{m}\right) $ with $u_{j}\in \Pi
_{p_{j}}^{+}\left( E_{j};X_{j}\right) ,(1\leq j\leq m)$ and $A\in \mathcal{D}%
_{r^{\ast }}^{m+}(X_{1},...,X_{m};F)$. This completes the proof.
\end{proof}

Any positive $\left( p_{1},\ldots ,p_{m};r\right) $-dominated multilinear
operator can be factorized through a Cohen positive strongly $r^{\ast }$%
-summing multilinear operator and positive $p_{j}$-summing linear operators $%
(1\leq j\leq m)$. Consequently, the class $\mathcal{D}_{\left( p_{1},\ldots
,p_{m};r\right) }^{+}$ forms a positive multi-ideal of type $\mathcal{M}%
_{R}^{+}\left( \mathcal{B}_{1,L}^{+},...,\mathcal{B}_{m,L}^{+}\right) $ where%
\begin{equation*}
\mathcal{M}_{R}^{+}=\mathcal{D}_{r^{\ast }}^{m+}\text{ and }\mathcal{B}%
_{j,L}^{+}=\Pi _{p_{j}}^{+}\left( 1\leq j\leq m\right) .
\end{equation*}

\textbf{Positive }$(p_{1},...,p_{m})$\textbf{-dominated. }A particularly
interesting case of positive $(p_{1},...,p_{m};r)$-dominated operators
occurs when $r=\infty ,$ i.e., $1/p=1/p_{1}+...+1/p_{m}$. These operators
are referred to as \textit{positive }$(p_{1},...,p_{m})$\textit{-dominated}.
We will provide a precise definition of these operators in the context of
mappings from Banach lattices to a Banach space.

\begin{definition}
\label{Def2.13a} Let $1\leq p,p_{1},\ldots ,p_{m}\leq \infty $ with $\frac{1%
}{p}=\frac{1}{p_{1}}+\ldots +\frac{1}{p_{m}}$. Let $E_{1},...,E_{m}$ be
Banach lattices and $Y$ be a Banach space. An $m$-linear operator $%
T:E_{1}\times ...\times E_{m}\rightarrow Y$ is positive $(p_{1},...,p_{m})$%
-dominated, if there is a constant $C>0$ such that for any $%
(x_{i}^{1},...,x_{i}^{m})\in E_{1}^{+}\times ...\times E_{m}^{+}$ ($1\leq
i\leq n$), we have 
\begin{equation}
(\sum_{i=1}^{n}\Vert T(x_{i}^{1},...,x_{i}^{m})\Vert ^{p})^{\frac{1}{p}}\leq
C\prod_{j=1}^{m}\Vert (x_{i}^{j})_{i=1}^{n}\Vert _{p_{j},w}.  \label{ppdm}
\end{equation}%
We denote the space of all such mappings by $\Pi _{p_{1},\ldots
,p_{m}}^{+}\left( E_{1},\ldots ,E_{m};Y\right) $. In this case, we define
the norm 
\begin{equation*}
\pi _{p_{1},\ldots ,p_{m}}^{+}(T)=\inf \{C>0:\quad C\ \text{satisfying\ the\
inequality}\ (\ref{ppdm})\}.
\end{equation*}
\end{definition}

It is straightforward to demonstrate the equivalence of the formula ($\ref%
{ppdm}$) with%
\begin{equation*}
(\sum_{i=1}^{n}\Vert T(x_{i}^{1},...,x_{i}^{m})\Vert ^{p})^{\frac{1}{p}}\leq
C\prod_{j=1}^{m}\Vert (x_{i}^{j})_{i=1}^{n}\Vert _{p_{j},|\omega |},
\end{equation*}%
for any $(x_{i}^{1},...,x_{i}^{m})\in E_{1}\times ...\times E_{m},$ $\left(
1\leq i\leq n\right) $.

\begin{proposition}
The class $(\Pi _{p_{1},\ldots ,p_{m}}^{+},\pi _{p_{1},\ldots ,p_{m}}^{+})$
is a positive Banach right multi-ideal.
\end{proposition}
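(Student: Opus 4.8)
The plan is to check, one by one, the defining conditions of a normed (Banach) positive left multi-ideal for $\Pi_{p_{1},\ldots,p_{m}}^{+}$, following the same pattern as the verification that $\mathcal{D}_{(p_{1},\ldots,p_{m};r)}^{+}$ is a Banach positive multi-ideal. The workhorse throughout will be the equivalent formulation noted right after Definition~\ref{Def2.13a}: $T$ is positive $(p_{1},\ldots,p_{m})$-dominated with constant $C$ if and only if $(\sum_{i=1}^{n}\Vert T(x_{i}^{1},\ldots,x_{i}^{m})\Vert^{p})^{1/p}\le C\prod_{j=1}^{m}\Vert(x_{i}^{j})_{i=1}^{n}\Vert_{p_{j},|w|}$ holds for \emph{all} finite families $(x_{i}^{j})$ in $E_{j}$ (not only the positive ones). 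Having the inequality available for arbitrary vectors is exactly what lets one push the operator through compositions freely.

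First I would establish condition $(i)$. Homogeneity of $\pi_{p_{1},\ldots,p_{m}}^{+}$ is clear, and the triangle inequality is Minkowski in $\ell_{p}^{n}$ applied to $\Vert(T_{1}+T_{2})(x_{i}^{\bullet})\Vert\le\Vert T_{1}(x_{i}^{\bullet})\Vert+\Vert T_{2}(x_{i}^{\bullet})\Vert$, so $\Pi_{p_{1},\ldots,p_{m}}^{+}(E_{1},\ldots,E_{m};Y)$ is a subspace on which $\pi_{p_{1},\ldots,p_{m}}^{+}$ is a seminorm; definiteness follows by taking $n=1$, which gives $\Vert T(x^{1},\ldots,x^{m})\Vert\le\pi_{p_{1},\ldots,p_{m}}^{+}(T)\prod_{j}\Vert x^{j}\Vert$, hence also $\Vert T\Vert\le\pi_{p_{1},\ldots,p_{m}}^{+}(T)$. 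For the finite-type maps, by the subspace property it is enough to treat one summand $T(x^{1},\ldots,x^{m})=\varphi_{1}(x^{1})\cdots\varphi_{m}(x^{m})\,y$; using $\frac1p=\sum_{j}\frac1{p_{j}}$ and the generalized Hölder inequality with exponents $p_{j}/p$ one bounds $(\sum_{i}\prod_{j}|\varphi_{j}(x_{i}^{j})|^{p})^{1/p}$ by $\prod_{j}(\sum_{i}|\varphi_{j}(x_{i}^{j})|^{p_{j}})^{1/p_{j}}$, and then $|\langle x_{i}^{j},\varphi_{j}\rangle|\le\langle|x_{i}^{j}|,|\varphi_{j}|\rangle$ together with $\Vert\,|\varphi_{j}|\,\Vert=\Vert\varphi_{j}\Vert$ and the identity $\Vert(x_{i}^{j})\Vert_{p_{j},|w|}=\sup_{\psi\in B_{E_{j}^{\ast}}^{+}}(\sum_{i}\langle\psi,|x_{i}^{j}|\rangle^{p_{j}})^{1/p_{j}}$ gives $\pi_{p_{1},\ldots,p_{m}}^{+}(T)\le\Vert y\Vert\prod_{j}\Vert\varphi_{j}\Vert<\infty$. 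The very same Hölder computation applied to $T^{m}(\lambda^{1},\ldots,\lambda^{m})=\lambda^{1}\cdots\lambda^{m}$ on $\mathbb{K}^{m}$ (where $B_{\mathbb{K}^{\ast}}^{+}$ forces $\Vert(\lambda_{i}^{j})\Vert_{p_{j},|w|}=(\sum_{i}|\lambda_{i}^{j}|^{p_{j}})^{1/p_{j}}$) yields $\pi_{p_{1},\ldots,p_{m}}^{+}(T^{m})\le1$, with the reverse inequality coming from evaluation at $(1,\ldots,1)$; this is condition $(b)$.

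Next I would verify the positive ideal property $(ii)$ together with the norm estimate $(c)$. Let $T\in\Pi_{p_{1},\ldots,p_{m}}^{+}(E_{1},\ldots,E_{m};X)$, $u_{j}\in\mathcal{L}^{r}(G_{j};E_{j})$ and $v\in\mathcal{L}(X;Y)$, and fix positive families $x_{i}^{j}\in G_{j}^{+}$. From $\Vert v\circ T\circ(u_{1},\ldots,u_{m})(x_{i}^{\bullet})\Vert\le\Vert v\Vert\,\Vert T(u_{1}(x_{i}^{1}),\ldots,u_{m}(x_{i}^{m}))\Vert$ and the arbitrary-vector form of the defining inequality for $T$, one obtains after taking $\ell_{p}^{n}$ norms
\begin{equation*}
\Bigl(\sum_{i=1}^{n}\Vert v\circ T\circ(u_{1},\ldots,u_{m})(x_{i}^{\bullet})\Vert^{p}\Bigr)^{1/p}\le\Vert v\Vert\,\pi_{p_{1},\ldots,p_{m}}^{+}(T)\prod_{j=1}^{m}\Vert(u_{j}(x_{i}^{j}))_{i=1}^{n}\Vert_{p_{j},|w|}.
\end{equation*}
It then remains to show $\Vert(u_{j}(x_{i}^{j}))_{i}\Vert_{p_{j},|w|}\le\Vert u_{j}\Vert_{r}\,\Vert(x_{i}^{j})_{i}\Vert_{p_{j},|w|}$: if $S\in\mathcal{L}^{+}(G_{j};E_{j})$ dominates $u_{j}$, then for $\psi\in B_{E_{j}^{\ast}}^{+}$ we have $\langle|u_{j}(x_{i}^{j})|,\psi\rangle\le\langle S(x_{i}^{j}),\psi\rangle$, so $\Vert(u_{j}(x_{i}^{j}))_{i}\Vert_{p_{j},|w|}\le\Vert(S(x_{i}^{j}))_{i}\Vert_{p_{j},w}\le\Vert S\Vert\,\Vert(x_{i}^{j})_{i}\Vert_{p_{j},|w|}$, and taking the infimum over such $S$ gives the bound. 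Hence $v\circ T\circ(u_{1},\ldots,u_{m})\in\Pi_{p_{1},\ldots,p_{m}}^{+}(G_{1},\ldots,G_{m};Y)$ with $\pi_{p_{1},\ldots,p_{m}}^{+}(v\circ T\circ(u_{1},\ldots,u_{m}))\le\Vert v\Vert\,\pi_{p_{1},\ldots,p_{m}}^{+}(T)\prod_{j}\Vert u_{j}\Vert$, establishing $(c)$ and a fortiori $(ii)$; alternatively one may first reduce to $u_{j}\in\mathcal{L}^{+}$ via Remark~\ref{Remark3}.

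Finally, for condition $(a)$ in the Banach case I would run the standard completeness argument: since $\Vert T\Vert\le\pi_{p_{1},\ldots,p_{m}}^{+}(T)$, a $\pi_{p_{1},\ldots,p_{m}}^{+}$-Cauchy sequence $(T_{k})$ converges in $\mathcal{L}(E_{1},\ldots,E_{m};Y)$ to some $T$; writing the defining inequality for $T_{k}-T_{l}$, freezing the finite families, and letting $l\to\infty$ (pointwise convergence together with Fatou's lemma in $\ell_{p}^{n}$) yields $\pi_{p_{1},\ldots,p_{m}}^{+}(T_{k}-T)\to0$, so $T\in\Pi_{p_{1},\ldots,p_{m}}^{+}$ and the space is complete. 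There is no serious obstacle here; the only mildly delicate point is the bookkeeping with the $|w|$-norms — in particular using $|\langle x,\varphi\rangle|\le\langle|x|,|\varphi|\rangle$ and a dominating positive operator to move between arbitrary and positive vectors — while everything else is routine. As a shortcut one could instead invoke the factorization method, observing that $\Pi_{p_{1},\ldots,p_{m}}^{+}$ coincides with the class built by that construction from the positive left ideals $\Pi_{p_{j}}^{+}$, which was already recorded to be a positive left multi-ideal.
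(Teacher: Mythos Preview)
Your verification is correct. The paper, however, does not supply a proof for this proposition at all: it is simply stated and the text moves directly on to the Pietsch domination theorem for this class. So your argument is not so much a different route as a complete filling-in of what the authors leave to the reader, in the spirit of their earlier proof for $\mathcal{D}_{(p_{1},\ldots,p_{m};r)}^{+}$.

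One small remark on the norm estimate in $(c)$: your intermediate bound via a dominating positive $S$ naturally produces $\Vert u_{j}\Vert_{r}$ rather than the operator norm $\Vert u_{j}\Vert$, whereas the stated conclusion has $\Vert u_{j}\Vert$. Since $\Vert u\Vert\le\Vert u\Vert_{r}$ in general, the cleanest way to land exactly on the paper's formulation is the alternative you already mention, namely reducing first to $u_{j}\in\mathcal{L}^{+}$ via Remark~\ref{Remark3} (where the two norms coincide); this is precisely how the paper handles condition $(c)$ in the analogous Proposition for $\mathcal{D}_{(p_{1},\ldots,p_{m};r)}^{+}$. Also, the shortcut you propose at the end (identify $\Pi_{p_{1},\ldots,p_{m}}^{+}$ with $\mathcal{L}(\Pi_{p_{1}}^{+},\ldots,\Pi_{p_{m}}^{+})$ and invoke the factorization-method remark) is logically fine but is a forward reference in the paper's ordering, since that identification is only established in the subsequent Theorem~\ref{thdo2}.
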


Similar to the earlier section, we can establish Pietsch's theorem
concerning this class.

\begin{theorem}[Pietsch domination theorem]
\label{thdo2} Let $1\leq p,p_{1},\ldots ,p_{m}<\infty $ with $\frac{1}{p}=%
\frac{1}{p_{1}}+\ldots +\frac{1}{p_{m}}$. Let $E_{1},...,E_{m}$ be Banach
lattices and $Y$ be a Banach space. The following properties are equivalent:

1) The operator $T\in \mathcal{L}\left( E_{1},\ldots ,E_{m};Y\right) $ is
positive $(p_{1},\ldots ,p_{m})$-dominated.

2) There is a constant $C>0$ and Borel probability measures $\mu _{j}$ on $%
B_{E_{j}^{\ast }}^{+}$ ($1\leq j\leq m$), such that 
\begin{equation}
\Vert T(x^{1},...,x^{m})\Vert \leq C\prod_{j=1}^{m}\left(
\int_{B_{E_{j}^{\ast }}^{+}}\langle |x^{j}|,x_{j}^{\ast }\rangle
^{p_{j}}d\mu _{j}\right) ^{\frac{1}{p_{j}}},
\end{equation}%
for all $(x^{1},...,x^{m})\in E_{1}\times ...\times E_{m}.$

3) There is a constant $C>0$ and Borel probability measures $\mu _{j}$ on $%
B_{E_{j}^{\ast }}^{+}$ ($1\leq j\leq m$), such that%
\begin{equation*}
\left\Vert T(x^{1},...,x^{m})\right\Vert \leq C\prod_{j=1}^{m}\left(
\int_{B_{E_{j}^{\ast }}^{+}}\langle x^{j},x_{j}^{\ast }\rangle ^{p_{j}}d\mu
_{j}\right) ^{\frac{1}{p_{j}}},
\end{equation*}%
for all $(x^{1},...,x^{m})\in E_{1}^{+}\times ...\times E_{m}^{+}.$

4) (Kwapie\'{n}'s factorization) There exist Banach spaces $X_{1},\ldots
,X_{m}$, a multilinear operator $A:X_{1}\times ...\times X_{m}\rightarrow Y$
and linear operators $u_{j}\in \Pi _{p_{j}}^{+}\left( E_{j};X_{j}\right) $,
so that $T=A\circ \left( u_{1},\ldots ,u_{m}\right) $, i.e. $\Pi
_{p_{1},\ldots ,p_{m}}^{+}=\mathcal{L}\left( \Pi _{p_{1}}^{+},\ldots ,\Pi
_{p_{m}}^{+}\right) .$ Moreover%
\begin{equation*}
\pi _{p_{1},\ldots ,p_{m}}^{+}(T)=\inf \left\{ \left\Vert A\right\Vert
\prod_{j=1}^{m}\pi _{p_{j}}^{+}\left( u_{j}\right) :T=A\circ \left(
u_{1},\ldots ,u_{m}\right) \right\} .
\end{equation*}%
In other words, we say that the class $\Pi _{p_{1},\ldots ,p_{m}}^{+}=%
\mathcal{L}(\Pi _{p_{1}}^{+},...,\Pi _{p_{m}}^{+})$ is the Banach positive
left multi-ideal generated by the factorization method from the Banach
positive operator left ideals $\Pi _{p_{1}}^{+},...,\Pi _{p_{m}}^{+}.$
\end{theorem}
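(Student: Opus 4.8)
The plan is to run the circle of implications in the spirit of Theorems~\ref{thdo1} and \ref{thfa}, the present statement being essentially their specialisation to $r=\infty $ (then $r^{\ast }=1$, the $(m+1)$-st factor $\big(\int_{B_{F^{\ast\ast}}^{+}}\langle|y^{\ast}|,\varphi_{m+1}\rangle^{r}d\mu_{m+1}\big)^{1/r}$ degenerates to a quantity $\le\Vert y^{\ast}\Vert $ and drops out, and $\mathcal{D}_{r^{\ast }}^{m+}=\mathcal{D}_{1}^{m+}=\mathcal{L}$). The only new feature is that the range is an arbitrary Banach space $Y$, not a Banach lattice, which is harmless since no step below uses an order on the range. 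For $(1)\Leftrightarrow(2)$ I would apply the abstract Pietsch domination theorem \cite[Theorem~4.6]{PSS12} just as in the proof of Theorem~\ref{thdo1}, but with $t=m$ rather than $m+1$: take $\mathcal{H}=\mathcal{L}(E_{1},\ldots,E_{m};Y)$, $K_{j}=\big(B_{E_{j}^{\ast}}^{+},w^{\ast}\big)$, $R_{j}(\varphi_{j},\lambda,x^{j})=\langle|x^{j}|,\varphi_{j}\rangle $ for $j=1,\ldots,m$, $S(T,\lambda,x^{1},\ldots,x^{m})=\Vert T(x^{1},\ldots,x^{m})\Vert $, with summability exponents $p_{1},\ldots,p_{m}$. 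By the $|w|$-reformulation of Definition~\ref{Def2.13a} recorded immediately after it, $T$ is positive $(p_{1},\ldots,p_{m})$-dominated exactly when it is $R_{1},\ldots,R_{m}$-$S$-abstract $(p_{1},\ldots,p_{m})$-summing, and \cite[Theorem~4.6]{PSS12} then produces the probability measures $\mu_{j}$ and inequality $(2)$; the structural checks on $S$ and on the $R_{j}$ are those already verified in Theorem~\ref{thdo1}. The step $(2)\Leftrightarrow(3)$ is the positive/negative-part argument of Theorem~\ref{Theorem equivalent1}: $(2)\Rightarrow(3)$ is immediate ($|x^{j}|=x^{j}$ on $E_{j}^{+}$), and $(3)\Rightarrow(2)$ follows by writing $x^{j}=x^{j+}-x^{j-}$, expanding $T(x^{1},\ldots,x^{m})$ into the $2^{m}$ terms with positive entries, applying $(3)$ to each and using $\langle x^{j\pm},\varphi_{j}\rangle\le\langle|x^{j}|,\varphi_{j}\rangle $, the extra factor $2^{m}$ being absorbed.

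For $(4)\Rightarrow(1)$, the easy half of Kwapie\'{n}'s factorisation, I would compute directly. If $T=S\circ(R_{1},\ldots,R_{m})$ with $S$ bounded and $R_{j}\in\Pi_{p_{j}}^{+}(E_{j};X_{j})$, then for $x_{i}^{j}\in E_{j}^{+}$ we have $\Vert T(x_{i}^{1},\ldots,x_{i}^{m})\Vert\le\Vert S\Vert\prod_{j}\Vert R_{j}x_{i}^{j}\Vert $; raising to the $p$-th power, summing over $i$ and applying H\"{o}lder's inequality with exponents $p_{j}/p$ (admissible because $\sum_{j}p/p_{j}=1$ and $p<\infty $) gives
\begin{equation*}
\Big(\sum_{i}\prod_{j}\Vert R_{j}x_{i}^{j}\Vert^{p}\Big)^{1/p}\le\prod_{j}\Big(\sum_{i}\Vert R_{j}x_{i}^{j}\Vert^{p_{j}}\Big)^{1/p_{j}}\le\prod_{j}\pi_{p_{j}}^{+}(R_{j})\,\Vert(x_{i}^{j})_{i=1}^{n}\Vert_{p_{j},w},
\end{equation*}
so $T$ is positive $(p_{1},\ldots,p_{m})$-dominated with $\pi_{p_{1},\ldots,p_{m}}^{+}(T)\le\Vert S\Vert\prod_{j}\pi_{p_{j}}^{+}(R_{j})$, which is ``$\le $'' in the norm identity.

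The substantive implication $(3)\Rightarrow(4)$ I would carry out as in Theorem~\ref{thfa}. Using the Pietsch measures $\mu_{j}$ on $K_{j}=\big(B_{E_{j}^{\ast}}^{+},w^{\ast}\big)$ furnished by $(3)$ with constant $\pi_{p_{1},\ldots,p_{m}}^{+}(T)$ (legitimate, since the summation form of $(3)$ is exactly Definition~\ref{Def2.13a}), I would introduce the linear maps $R_{j}^{0}:E_{j}\rightarrow L_{p_{j}}(K_{j},\mu_{j})$, $R_{j}^{0}(x^{j})(\varphi_{j})=\langle x^{j},\varphi_{j}\rangle $, put $X_{j}=\overline{R_{j}^{0}(E_{j})}^{L_{p_{j}}(K_{j},\mu_{j})}$, and let $R_{j}:E_{j}\rightarrow X_{j}$ be the corestriction. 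A Tonelli computation together with the fact that $\mu_{j}$ is a probability measure gives $\Vert R_{j}^{0}(x^{j})\Vert\le\Vert x^{j}\Vert $ and shows that $R_{j}$ is positive $p_{j}$-summing with $\pi_{p_{j}}^{+}(R_{j})\le1$. One then defines the $m$-linear map $S_{0}$ on $R_{1}^{0}(E_{1})\times\cdots\times R_{m}^{0}(E_{m})$ by $S_{0}(R_{1}^{0}x^{1},\ldots,R_{m}^{0}x^{m})=T(x^{1},\ldots,x^{m})$; the domination inequality $(2)$ is invoked to see that $S_{0}$ is well defined and that $\Vert S_{0}(R_{1}^{0}x^{1},\ldots,R_{m}^{0}x^{m})\Vert\le\pi_{p_{1},\ldots,p_{m}}^{+}(T)\prod_{j}\Vert R_{j}^{0}x^{j}\Vert $, so $S_{0}$ extends uniquely to a bounded $m$-linear $S:X_{1}\times\cdots\times X_{m}\rightarrow Y$ with $\Vert S\Vert\le\pi_{p_{1},\ldots,p_{m}}^{+}(T)$, $\pi_{p_{j}}^{+}(R_{j})\le1$, and $T=S\circ(R_{1},\ldots,R_{m})$. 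Combined with $(4)\Rightarrow(1)$, this yields the norm identity and the factorisation $\Pi_{p_{1},\ldots,p_{m}}^{+}=\mathcal{L}(\Pi_{p_{1}}^{+},\ldots,\Pi_{p_{m}}^{+})$.

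The step I expect to be delicate is precisely the well-definedness and boundedness of $S_{0}$: one must ensure that $R_{j}^{0}x^{j}=0$ forces $T(x^{1},\ldots,x^{m})=0$, which is the point where the domination inequality $(2)$ (the $|w|$-form, not merely the cone form $(3)$) has to be brought to bear and where the particular Pietsch measures $\mu_{j}$ matter. Granting this, everything else is a faithful transcription of the arguments of Theorems~\ref{thdo1} and \ref{thfa} specialised to $r=\infty $, and I would not anticipate any further obstacle.
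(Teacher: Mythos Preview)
Your proposal is correct and follows essentially the approach the paper intends: the paper gives no explicit proof of Theorem~\ref{thdo2} but introduces it by saying ``Similar to the earlier section, we can establish Pietsch's theorem concerning this class,'' i.e., one specialises Theorems~\ref{thdo1} and~\ref{thfa} to $r=\infty$ exactly as you describe. The only minor deviation is in $(4)\Rightarrow(1)$, where you argue directly via H\"older on the sequence side, whereas the analogous step in Theorem~\ref{thfa} goes through the Pietsch domination inequalities for each $R_{j}$ and for $S$; both routes are valid and lead to the same norm inequality.
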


\begin{remark}
The composition class $\mathcal{D}_{r^{\ast }}^{+}\circ \Pi _{p_{1},\ldots
,p_{m}}^{+}$ is equal to $\mathcal{D}_{r^{\ast }}^{+}\circ \mathcal{L}(\Pi
_{p_{1}}^{+},...,\Pi _{p_{m}}^{+}),$ which in turn is equal to $\mathcal{D}%
_{\left( p_{1},\ldots ,p_{m};r\right) }^{+}$. Consequently, an alternative
expression for the class $\mathcal{D}_{\left( p_{1},\ldots ,p_{m};r\right)
}^{+}$ is given by $\mathcal{B}_{R}^{+}\circ \mathcal{M}_{L}^{+}$ with $%
\mathcal{B}_{R}^{+}=\mathcal{D}_{r^{\ast }}^{+}$ and $\mathcal{M}%
_{L}^{+}=\Pi _{p_{1},\ldots ,p_{m}}^{+}$.
\end{remark}

\textbf{Declarations}\newline

\textbf{Conflict of interest.} The authors declare that they have no
conflicts of interest.\newline

\end{document}